\newcommand{\GG}{{\mathcal G}}
\newcommand{\Aut}[1]{\mbox{\rm Aut}(#1)}
\newcommand{\Out}[1]{\mbox{\rm Out}(#1)}
\newcommand{\mc}{\mathbb{C}}
\newcommand{\mz}{\mathbb{Z}}
\newcommand{\N}{\ensuremath{\mathbb{N}}}
\newcommand{\Z}{\ensuremath{\mathbb{Z}}}
\newcommand{\R}{\ensuremath{\mathbb{R}}}
\newcommand{\F}[1]{\ensuremath{\mathbb{F}_{#1}}}
\newcommand{\FN}{\F{n}}
\def\tilde{\widetilde}
\def\hat{\widehat}
\newtheorem{theorem}{Theorem}[section]
\newtheorem{proposition}[theorem]{Proposition}
\newtheorem{corollary}[theorem]{Corollary}
\newtheorem{lemma}[theorem]{Lemma}
\theoremstyle{definition}
\newtheorem{definition}[theorem]{Definition}
\newtheorem{remark}[theorem]{Remark}
\newtheorem{convention}[theorem]{Convention}
\newtheorem{aside}[theorem]{Aside}
\newtheorem{properties}[theorem]{Properties}
\newtheorem{definition/remark}[theorem]{Definition-Remark}
\def\strutdepth{\dp\strutbox}
\def \ss{\strut\vadjust{\kern-\strutdepth \sss}}
\def \sss{\vtop to \strutdepth{
\baselineskip\strutdepth\vss\llap{$\diamondsuit\;\;$}\null}}
\def\strutdepth{\dp\strutbox}
\def \sst{\strut\vadjust{\kern-\strutdepth \ssss}}
\def \ssss{\vtop to \strutdepth{
\baselineskip\strutdepth\vss\llap{$\spadesuit\;\;$}\null}}
\def\strutdepth{\dp\strutbox}
\def \ssh{\strut\vadjust{\kern-\strutdepth \sssh}}
\def \sssh{\vtop to \strutdepth{
\baselineskip\strutdepth\vss\llap{$\heartsuit\;\;$}\null}}
\begin{document}

\title{
The mapping torus
group
of a free group automorphism is
hyperbolic relative to the canonical subgroups of polynomial
growth}

\author{F. Gautero, M. Lustig}

\maketitle

\begin{abstract}
We prove that the mapping torus group $\FN \rtimes_{\alpha} \Z$ of
any automorphism $\alpha$ of a free group $\FN$ of finite rank $n
\geq 2$ is weakly hyperbolic relative to the canonical (up to
conjugation) family $\mathcal H(\alpha)$ of subgroups of $\FN$ which
consists of (and contains representatives of all) conjugacy classes
that grow polynomially under iteration of $\alpha$. Furthermore, we
show that $\FN \rtimes_{\alpha} \Z$ is strongly hyperbolic relative
to the mapping torus of the family $\mathcal H(\alpha)$. As an
application, we use a result of Drutu-Sapir to deduce that $\FN
\rtimes_{\alpha} \Z$ has Rapic Decay.
\end{abstract}

\section{Introduction}

Let $\FN$ be a (non-abelian) free group of finite rank $n \geq 2$,
and let $\alpha$ be any automorphism of $\FN$. It is well known
(see \cite{BH} and \cite{LL5})
 that
elements $w \in \FN$ grow either at least exponentially or at most
polynomially, under iteration of $\alpha$. This terminology is
slightly misleading, as in fact it is the translation length
$||w||_{\mathcal A}$ of $w$ on the Cayley tree of $\FN$ with respect
to some basis $\mathcal A$ that is being considered, which is the
same as the word length in $\mathcal{A}^{\pm 1}$ of any cyclically
reduced $w' \in \FN$ conjugate to $w$.

\smallskip

There is a canonical collection of finitely many conjugacy classes
of finitely generated subgroups $H_{1}, \ldots, H_{r}$ in $\FN$
which consist entirely of elements of polynomial growth, and which
has furthermore the property that every polynomially growing element
$w \in \FN$ is conjugate to an element $w' \in \FN$ that belongs to
some of the $H_{i}$. In other words, the set of all polynomially
growing elements of $\FN$ is identical with the union of all
conjugates of the $H_{i}$. For more details see \S 3 below.

\smallskip

This {\em characteristic
family} $\mathcal H(\alpha) = (H_{1}, \ldots, H_{r})$ is
$\alpha$-invariant up to conjugation, and in the mapping torus group
$$
\FN \rtimes_{\alpha} \Z = \, \, <x_1,\ldots,x_n,t \mid t x_i t^{-1}
= \alpha(x_i) \mbox{ for all } i=1,\ldots,n>$$ one can consider
induced mapping torus subgroups $H_{i}^\alpha = H_{i}
\rtimes_{\alpha^{m_{i}}} \Z$, where $m_{i} \geq 1$ is the smallest
exponent such that $\alpha^{m_{i}}(H_{i})$ is conjugate to $H_{i}$.

There is a canonical family $\mathcal{H}_{\alpha}$ of such mapping
torus subgroups, which is uniquely determined, up to conjugation in
$\FN \rtimes_{\alpha} \Z$, by the characteristic family $\mathcal
H(\alpha)$ (see Definition \ref{inducedmappingtorus}).

\begin{theorem}
\label{MainTheorem}
 Let $\alpha \in \Aut{\F{n}}$, let
 $\mathcal{H}(\alpha) = (H_{1}, \ldots,
H_{r})$ be the characteristic family of subgroups of polynomial
$\alpha$-growth, and let $\mathcal{H}_{\alpha}$ be its mapping
torus. Then:

 \begin{enumerate}
   \item[(1)] $\FN \rtimes_{\alpha} \Z$ is weakly
 hyperbolic relative to ${\mathcal H}(\alpha)$.
   \item[(2)] $\FN \rtimes_{\alpha} \Z$ is strongly hyperbolic relative to
   $\mathcal{H}_{\alpha}$.
 \end{enumerate}
\end{theorem}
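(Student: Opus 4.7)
The plan is to build a geometric model for $\FN \rtimes_{\alpha} \Z$ from an improved relative train track representative of $\alpha$, and then to prove both parts by analyzing geodesics in this model after coning off the appropriate peripheral sets.

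First, I would take an improved relative train track map $f : G \to G$ representing a sufficiently large positive iterate of $\alpha$, as supplied by the Bestvina--Feighn--Handel theory. It comes equipped with a filtration $\emptyset = G_0 \subset G_1 \subset \cdots \subset G_K = G$ whose strata are exponentially growing, non-exponentially growing (polynomial), or zero. The canonical polynomial subgraphs give, via their fundamental groups, representatives of the conjugacy classes making up $\Hh(\alpha)$, as identified in \S 3. The universal cover $\widetilde{M_f}$ of the mapping torus $M_f = G \times [0,1]/(x,1)\sim(f(x),0)$ then serves as a quasi-isometric geometric model for $\FN \rtimes_{\alpha} \Z$. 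The preimage in $\widetilde{M_f}$ of the canonical polynomial subgraphs is a disjoint union of connected subcomplexes, each stabilised by a conjugate of some $H_i$; the preimage of their $t$-orbits is stabilised by the mapping torus subgroups in $\Hh_{\alpha}$.

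For part (1), I would cone off each connected component of the lift of each polynomial subgraph in $\widetilde{M_f}$ and show that the resulting space is Gromov hyperbolic. The underlying dichotomy from the train track filtration is that any path in $G$ whose $f$-iterates are not eventually absorbed into the polynomial part must grow exponentially in length. Using bounded cancellation together with a standard flaring argument, this translates into exponential divergence, in the $t$-direction of $\widetilde{M_f}$, of any two paths that are not jointly trapped in the lifts of the polynomial subgraphs. From here, thinness of triangles in the coned-off space follows by a stack-of-fibres argument analogous to the one used for hyperbolic fibred $3$-manifolds.

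For part (2), coning off additionally the $t$-orbits of the polynomial subspaces is, up to quasi-isometry, the same as coning off cosets of $\Hh_{\alpha}$ in the Cayley graph of $\FN \rtimes_{\alpha} \Z$. Hyperbolicity of this doubly coned space already follows from part (1) by a further collapse. What remains is the Bounded Coset Penetration property of Farb, equivalently the fineness condition of Bowditch. I would establish BCP from three ingredients: (i) quasi-convexity of each $H_i^{\alpha}$ in $\FN \rtimes_{\alpha} \Z$; (ii) uniform bounded coarse intersection between distinct peripheral cosets; and (iii) a control lemma ensuring that two geodesics with common endpoints enter and leave each peripheral coset at points within uniformly bounded distance. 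All three should follow from the train track stratification, since as soon as a geodesic leaves a polynomial stratum it enters an exponential stratum where flaring forces bounded fellow-travelling. The main obstacle I anticipate is precisely this BCP property: translating the combinatorial control furnished by relative train tracks into the uniform geometric statement needed for BCP requires careful handling of geodesic segments that transition through a polynomial stratum bordered by exponential strata on both sides, and of the subtle interplay between the $\Z$-direction and the various polynomial strata. A secondary technical point is ensuring that the flaring/exponential divergence estimate holds uniformly across all non-polynomial portions of geodesics, independently of the choice of train track representative.
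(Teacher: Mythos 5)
Your proposal takes a genuinely different route: rather than verifying the hypotheses of a combination theorem, you build the mapping torus $M_f$ and try to prove hyperbolicity and BCP by hand, essentially reproving the special case of the Combination Theorem (Theorem \ref{montheoreme}, from \cite{Galast}) that the paper imports as a black box. The paper instead reduces everything to three modular facts: quasi-convexity and malnormality of $\mathcal{H}(\alpha)$ in $\FN$ (Proposition \ref{malnormal} together with Lemma \ref{classique}), relative hyperbolicity of the automorphism $\alpha$ (Proposition \ref{above}), and the Combination Theorem.

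The central gap is your ``underlying dichotomy'': you assert that any path in $G$ whose $f$-iterates are not eventually absorbed into the polynomial part must grow exponentially. This is false for Bestvina--Feighn--Handel relative train track maps because of indivisible Nielsen paths: an INP $\eta$ supported in an exponential stratum satisfies $f^{t_0}(\eta)\simeq\eta$ rel endpoints, so it neither flares under iteration nor is carried by a polynomial subgraph. Such paths (and their periodic concatenations) defeat the flaring argument you need for part (1) and produce quasi-flats in the coned-off mapping torus. The paper's $\beta$-train track apparatus --- expanding Nielsen faces at INPs, introducing auxiliary edges into the relative part, and passing to normalized paths with the resulting growth dichotomy of Corollary \ref{expanding} --- exists precisely to absorb INPs into the peripheral structure so that the residual dynamics genuinely flare; your plan has no analogue of this step. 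Two secondary points: your claim that ``hyperbolicity of this doubly coned space already follows from part (1) by a further collapse'' needs an argument (coning off the larger cosets preserves hyperbolicity only if those cosets are quasi-convex in the once-coned space), and for part (2) you rightly identify BCP as the hard part but offer no concrete handle on it --- whereas the paper derives fineness from the Combination Theorem fed by the malnormality of $\mathcal{H}(\alpha)$ (a consequence of the trivial-arc-stabilizer property in Proposition \ref{invarianttree}), which your sketch never invokes.
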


Here a group $G$ is called {\em weakly hyperbolic} relative to a family
of subgroups $H_{i}$ if the Cayley graph of $G$, with every
left coset
of any of the $H_{i}$ coned off, is a
$\delta$-hyperbolic space (compare Definition \ref{conedgraph}).
We say that $G$ is {\em strongly
hyperbolic} relative to $(H_{1}, \ldots, H_{r})$ if in addition this
coned off Cayley graph is {\em fine}, compare Definition \ref{fine}.
The concept of relatively hyperbolic groups originates from Gromov's
seminal work
\cite{Gromov}.  It has been fundamentally shaped by Farb \cite{Fa} and
Bowditch \cite{Bo}, and it has since then been placed into the core of
geometric group theory in its most present form, by work of several
authors, see for example \cite {Sz}, \cite{Bu} and \cite{Os}.
The relevant facts about relative hyperbolicity are recalled in \S 2
below.

\bigskip

As a consequence of our main theorem we derive the following
corollary, using earlier results of Jolissaint \cite{Jolissaint} and
Drutu-Sapir \cite{DrutuRD}.

\begin{corollary}
\label{RD} For every $\alpha \in \Aut \FN$ the mapping torus group
$\FN \rtimes_{\alpha} \Z$ satisfies the Rapid Decay property.
\end{corollary}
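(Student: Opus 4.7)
\textbf{Proof proposal for Corollary \ref{RD}.}

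The plan is a short deduction from part (2) of Theorem \ref{MainTheorem}, using two results from the literature.

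The main external tool is the theorem of Drutu--Sapir \cite{DrutuRD}: a finitely generated group strongly hyperbolic relative to a family of subgroups, each having the Rapid Decay property, itself has RD. Applied to $G = \FN \rtimes_\alpha \Z$ with the peripheral family $\mathcal{H}_\alpha = (H_1^\alpha, \ldots, H_r^\alpha)$ furnished by Theorem~\ref{MainTheorem}(2), this reduces the task to proving RD for each peripheral subgroup $H_i^\alpha = H_i \rtimes_{\alpha^{m_i}} \Z$.

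For the peripheral subgroups one appeals to Jolissaint \cite{Jolissaint}. By construction of the characteristic family $\mathcal{H}(\alpha)$, the restriction of $\alpha^{m_i}$ to $H_i$ is a polynomially growing automorphism, and so $H_i^\alpha$ sits in a short exact sequence $1 \to H_i \to H_i^\alpha \to \Z \to 1$ whose $\Z$-action on $H_i$ is polynomially distorted, in the sense that $t^k w t^{-k}$ has $\FN$-word length polynomial in $k$ for every fixed $w \in H_i$. Jolissaint's theorems furnish RD for several classes of groups that cover this situation: polynomial growth groups, hyperbolic groups, and extensions admitting a polynomial growth (or polynomially distorted) kernel. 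The task is to match each $H_i^\alpha$ with one of these classes, using the description of the polynomially growing subgroups of $\FN$ recalled in \S 3.

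The principal technical step, and the only non-routine point in the deduction, is precisely this structural identification of each $H_i^\alpha$ as a group falling within Jolissaint's framework. It relies on the fine description of polynomially growing subgroups and their mapping tori coming from relative train track theory, which is the setting of \S 3. Once this is in hand, a single application of \cite{DrutuRD} to Theorem~\ref{MainTheorem}(2) combines with RD for the $H_i^\alpha$ to give Rapid Decay for $\FN \rtimes_\alpha \Z$, thereby establishing the corollary.
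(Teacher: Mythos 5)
Your overall route is exactly the one the paper takes: apply Drutu--Sapir (Theorem~\ref{resultat Drutu}) to reduce Rapid Decay for $\FN \rtimes_\alpha \Z$ to Rapid Decay for the peripheral subgroups furnished by Theorem~\ref{MainTheorem}(2), and then establish RD for each $H_i^\alpha$ via Jolissaint's work, using that the characteristic family consists of subgroups of polynomial $\alpha$-growth. Where you stop short, however, is at the point you yourself flag as ``the principal technical step'': you correctly observe that $H_i^\alpha = H_i \rtimes_{\alpha^{m_i}} \Z$ and that the conjugation action is polynomially bounded, but you leave unresolved which Jolissaint theorem closes the argument. In fact no structural case analysis is needed. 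The relevant statement is Jolissaint's semi-direct product result (Proposition~\ref{resultat jolissaint} in the paper): if $G$ and $\Gamma$ both have RD and $\theta\colon \Gamma \to \Aut G$ has polynomial amplitude, then $G \rtimes_\theta \Gamma$ has RD. Here $G = H_i$ is a finitely generated free group (hence has RD by Haagerup), $\Gamma = \Z$ obviously has RD, and $\theta(k) = (\alpha^{m_i})^k|_{H_i}$ has polynomial amplitude precisely because $\alpha^{m_i}$ restricted to $H_i$ is of polynomial growth; polynomial word-length growth of $\alpha^{m_i k}(h)$ with respect to $\FN$ and with respect to a generating set of $H_i$ agree up to constants since $H_i$ is finitely generated hence quasi-convex in $\FN$. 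This one observation replaces your ``matching'' step and completes the corollary.
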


The proof of this corollary, as well as definitions and background about the Rapid Decay property, are given below in \S \ref{RapidDecay}.

\medskip

Another consequence of our main theorem, pointed out to us by M.
Bridson, is an alternative (and perhaps conceptually simpler) proof
of the following recent result:

\begin{theorem}[Bridson-Groves]
\label{bridsongroves} For every $\alpha \in \Aut \FN$ the mapping
torus group $\FN \rtimes_{\alpha} \Z$ satisfies a quadratic
isoperimetric inequality.
\end{theorem}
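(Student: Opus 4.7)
The plan is to derive the quadratic isoperimetric inequality for $\FN \rtimes_\alpha \Z$ by combining part (2) of our main Theorem with two results from the literature: one describing how Dehn functions behave under strong relative hyperbolicity, and one handling mapping tori of polynomially growing free group automorphisms.

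First, I would invoke the result of Osin \cite{Os} (see also \cite{DrutuRD}) asserting that if $G$ is strongly hyperbolic relative to a finite family of finitely generated subgroups $\{P_1,\ldots,P_r\}$, and if each $P_i$ admits a Dehn function bounded above by $f(n) \succeq n^2$, then $G$ itself admits a Dehn function of the same order. In particular, if every peripheral subgroup has quadratic Dehn function, then so does $G$. By part (2) of Theorem~\ref{MainTheorem} it therefore suffices to show that each mapping torus subgroup $H_i^\alpha = H_i \rtimes_{\alpha^{m_i}} \Z$ has a quadratic Dehn function.

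Next, I would observe that each $H_i^\alpha$ is isomorphic to the mapping torus of a polynomially growing automorphism of a finitely generated free group. Indeed, $H_i$ is finitely generated and hence free of finite rank by Nielsen--Schreier. Since $\alpha^{m_i}(H_i) = g_i H_i g_i^{-1}$ for some $g_i \in \FN$, the map $h \mapsto g_i^{-1}\alpha^{m_i}(h) g_i$ defines an automorphism $\beta_i$ of $H_i$, and a routine check produces an isomorphism $H_i \rtimes_{\beta_i} \Z \cong H_i^\alpha$ (obtained by replacing the stable letter $t^{m_i}$ by $g_i^{-1} t^{m_i}$). By the defining property of $\Hh(\alpha)$, every element of $H_i$ grows polynomially under $\alpha$, hence under $\beta_i$. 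Applying the known quadratic isoperimetric inequality for mapping tori of polynomially growing free group automorphisms (established in increasing generality by Macura and by Bridson--Groves) then yields the required quadratic Dehn function for each $H_i^\alpha$.

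The main obstacle, and the feature that makes the argument hinge on the full strength of our main theorem, is that Osin's Dehn function estimate requires genuine strong relative hyperbolicity: the fineness of the coned-off Cayley graph is essential, and this is precisely what distinguishes part (2) from part (1) of Theorem~\ref{MainTheorem}. Once the two ingredients above are in place, they combine to yield a quadratic Dehn function for $\FN \rtimes_\alpha \Z$, which is the content of Theorem~\ref{bridsongroves}.
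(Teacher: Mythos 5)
Your proof is correct and follows essentially the same route as the paper: invoke strong relative hyperbolicity from Theorem~\ref{MainTheorem}(2), apply a Dehn-function transfer result to reduce to the peripheral subgroups, and handle those by the known quadratic isoperimetric inequality for mapping tori of polynomially growing free group automorphisms (Macura, Bridson--Groves). The only cosmetic difference is that the paper attributes the transfer result to Farb~\cite{Fa} rather than to Osin~\cite{Os}; you also spell out the routine verification that each $H_i^\alpha$ is itself a mapping torus of a polynomially growing automorphism, which the paper leaves implicit.
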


The proof of this result is given in a sequence of three
 long papers \cite{BG1} \cite{BG2} \cite{BG3},
where a non-trivial amount of technical machinery is developed.
However, a first step is much easier: The special case of the above
theorem where all of $\FN$ has polynomial $\alpha$-growth (compare
also \cite{Ma}). It is shown by Farb \cite{Fa} that, if a group $G$
is strongly hyperbolic relatively to a finite family of subgroups
which all satisfy a quadratic isoperimetric inequality, then $G$
itself satisfies a quadratic isoperimetric inequality. Thus, the
special case of Bridson-Groves' result, together with our Theorem
\ref{MainTheorem}, gives the full strength of Theorem
\ref{bridsongroves}.

\bigskip

This paper has several ``predecessors'': The absolute case, where the
characteristic family $\mathcal H(\alpha)$ is empty, has been proved
by combined work of Bestvina-Feighn \cite{BF}  (see also \cite{Ga0})
and Brinkmann
\cite{Brink}.
In \cite{Ga2} the case of
geometric automorphisms of $\FN$ (i.e. automorphisms induced by
surface homeomorphisms) has been treated. The methods developed
there and in \cite{Ga0}
have been further extended in \cite{Galast} to give a general
combination theorem for relatively hyperbolic groups
(see also \cite{fumiers}). This
combination theorem is a cornerstone in the proof of our main result
stated above; it is quoted in the form needed here as Theorem
\ref{montheoreme}.

\smallskip

The other main ingredient in the proof of Theorem \ref{MainTheorem}
are $\beta$-train track representatives for free group automorphisms
as developed by the second author (see Appendix), presented here in
\S 4 and \S 5 below.  These train track representatives combine
several advantages of earlier such train track representatives,
althought they are to some extent simpler, except that their
universal covering is not a tree.

The bulk of the work in this paper (\S 6 and \S 7) is devoted to
make up for this technical disadvantage:  We introduce and analyze
{\em normalized paths} in $\beta$-train tracks, and we show that
they can be viewed as proper analogues of geodesic segments in a
tree.

In particular, we prove that in the universal covering of a
$\beta$-train track

\begin{enumerate}
\item[(1)]
any two vertices are connected by a unique normalized path, and

\item[(2)]
normalized paths are quasi-geodesics (with respect to both, the
absolute and the relative metric,
see \S 7).
\end{enumerate}

Normalized paths are useful in other contexts as well. In this paper
they constitute the main tool needed to prove the following
proposition.

The precise definition of a {\em relatively hyperbolic} automorphism
is given below in Definition \ref{hyperbolicauto}.

\begin{proposition}
\label{above}
Every automorphism $\alpha \in \Aut \FN$ is hyperbolic
relative to the characteristic family $\mathcal H(\alpha)$ of subgroups of
polynomial $\alpha$-growth.
\end{proposition}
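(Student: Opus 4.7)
The target Proposition \ref{above} asserts that, after coning off all conjugates of the polynomial-growth subgroups $H_i$, iteration of $\alpha$ (or of $\alpha^{-1}$) pushes every ``genuinely exponential'' element exponentially fast to infinity in the relative metric. The natural plan is to package the dynamics of $\alpha$ by a $\beta$-train track representative $f : \tau \to \tau$ (the construction promised in \S 4--5), to package cyclic conjugacy classes as normalized loops in $\tau$, and to exploit the two properties (1) and (2) of normalized paths highlighted in the excerpt: existence/uniqueness in the universal cover, and the quasi-geodesic property in both the absolute and the relative metric.

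The first step is to characterize, in the train track $\tau$, which loops represent elements of $\cup_i \bigcup_{g \in \FN} g H_i g^{-1}$: by the construction of $\Hh(\alpha)$ these are precisely the loops that, up to cyclic reduction, are homotopic into the ``polynomial stratum'' $\tau_{\mathrm{pol}} \subset \tau$. Any $w \in \FN$ not lying in a conjugate of some $H_i$ is therefore represented by a normalized loop $\gamma_w$ that crosses at least one exponential edge $e$ of $\tau$. Next I would iterate: because $\beta$-train tracks are designed so that the restriction of $f$ to a legal turn is again legal, a normalized decomposition of $\gamma_w$ into legal pieces separated by a bounded number of illegal turns is preserved under $f$ (possibly after re-normalization absorbing a bounded amount of cancellation at each illegal turn). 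The exponential edges in each legal piece are stretched by at least a factor $\lambda > 1$ at every application of $f$, while the polynomial edges contribute only polynomially, and in the relative (coned-off) metric the polynomial contribution is collapsed.

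The core technical step, and the one I expect to be the main obstacle, is the control of cancellation at illegal turns under iteration: one must show that two consecutive legal pieces of $f^N(\gamma_w)$ cannot cancel each other enough to kill the exponential edges produced in the interior of each piece. This is exactly the point where the machinery of normalized paths from \S 6--\S 7 earns its keep: the uniqueness statement (1) ensures that the normalized representative of $f^N(\gamma_w)$ is determined, and the quasi-geodesic statement (2) gives a uniform comparison between the combinatorial length of this representative and the length of $\alpha^N(w)$ in the relative Cayley graph of $\FN$. Combined with the bound on cancellation at illegal turns intrinsic to $\beta$-train tracks, this yields a constant $\lambda' > 1$ and an integer $N_0$ such that
\[
|\alpha^{N_0}(w)|_{\Hh(\alpha)} \;\geq\; \lambda' \, |w|_{\Hh(\alpha)}
\]
whenever $|w|_{\Hh(\alpha)}$ exceeds a uniform threshold.

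The final step is to handle the asymmetry between forward and backward iteration. Some exponential strata of $\tau$ may be contracted by $f^{-1}$ rather than expanded by $f$; running the whole argument simultaneously for $\alpha$ and $\alpha^{-1}$ (using a $\beta$-train track representative of each, and noting that $\Hh(\alpha) = \Hh(\alpha^{-1})$ since the characteristic family is intrinsic to the set of polynomially growing conjugacy classes) provides, for every $w$ not conjugate into the $H_i$, at least one of the two directions of iteration in which the relative length grows exponentially. This is precisely the condition demanded by Definition \ref{hyperbolicauto}, and Proposition \ref{above} follows.
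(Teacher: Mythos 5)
Your proposal captures several genuine ideas from the paper's argument (normalized paths, their quasi-geodesic property with respect to the relative metric, the comparison between $\tilde\GG^2$ with its relative part coned off and the relative Cayley graph $\Gamma_S^{\mathcal H(\alpha)}(\FN)$), but there is a concrete gap precisely at the step you correctly identify as critical, and your resolution of it would not work.

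The middle of your argument over-claims: after observing exponential stretching of strongly legal pieces and bounded cancellation, you assert that forward iteration alone yields
$|\alpha^{N_0}(w)|_{\mathcal H(\alpha)} \geq \lambda' |w|_{\mathcal H(\alpha)}$ for all sufficiently long $w$. This is false: a normalized path may have many illegal turns relative to its relative length, and in that regime the cancellation at illegal turns can defeat the forward expansion entirely. The paper handles this by a genuine dichotomy (Corollary \ref{expanding}): either $|\gamma|_{rel}$ is large compared to the number of illegal turns, in which case forward iteration expands (via Proposition \ref{forwardgrowth}), or else the path has many illegal turns per unit of relative length, in which case \emph{backward} iteration must grow, because the number of illegal turns is (roughly) halved under $f^K$ followed by normalization (Proposition \ref{inversegrowth}), so any normalized $f^N$-preimage must have far more illegal turns, hence far more relative length (each strongly legal gap between illegal turns contributes at least $1$). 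This is the missing mechanism.

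Your proposed substitute --- running the argument for a second $\beta$-train track representing $\alpha^{-1}$, motivated by ``some exponential strata of $\tau$ may be contracted by $f^{-1}$'' --- does not supply that mechanism and rests on a misconception. In a $\beta$-train track every edge of $\hat\Gamma$ is expanding by property (e) of Theorem \ref{betterttrepresentative}; there are no contracting exponential strata. The failure of forward expansion is caused solely by cancellation at illegal turns, which is a property of the particular normalized path $\gamma$, not of the strata. Moreover, there is no reason why the normalized path representing $w$ in a train track for $\alpha^{-1}$ would have few illegal turns just because the one for $\alpha$ has many; the two train tracks are not related in a way that yields the dichotomy. The paper needs only the single representative $f: \GG^2 \to \GG^2$ and the two-sided estimate of Corollary \ref{expanding}. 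A final, smaller point: since $|\cdot|_{S,\mathcal H}$ is a word length, not a cyclic/conjugacy length, the argument must be carried out for paths from a fixed lift $Q$ to $wQ$ in $\tilde\GG^2$ (with $Q$ arranged to be $\hat f$-fixed, so that $\tilde f^N(wQ)=\alpha^N(w)\tilde f^N(Q)=\alpha^N(w)Q$), not for normalized loops representing conjugacy classes as you set up.
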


\medskip
\noindent {\em Acknowledgments.} The first author would like to
thank Martin Bridson for helpful and encouraging remarks. The second
author would like to point out that some of the work presented here
has also been inspired by his collaboration with Gilbert Levitt.
Further thanks go to Universit\'e P. C\'ezanne and Universit\'e de
Provence at Aix-Marseille and to the CIRM at Luminy for having
supported during the residential session ``Groups 007'' in February
2007  a 4 week stay of the first author at Marseille.

\section{Relative hyperbolicity}
\label{relative hyperbolicity}

Let $\Gamma$ be a connected, possibly infinite graph. We assume that
every edge $e$ of $\Gamma$ has been given a length $L(e) > 0$. This
makes $\Gamma$ into a metric space. If $\Gamma$ is locally finite,
or if the edge lengths are chosen from a finite subset of $\R$, then
$\Gamma$ is furthermore a {\em geodesic} space, i.e. any two points
are connected by a path that has as length precisely the distance
between its endpoints.

\begin{definition}
\label{fine} A graph $\Gamma$ is called {\em fine} if for every
integer $n \in \N$ any edge $e$ of $\Gamma$ is contained in only
finitely many circuits of length less or equal to $n$. Here a {\em
circuit} is a closed edge path that passes at most once over any
vertex of $\Gamma$.
\end{definition}

Let $G$ be a finitely generated group and let $S \subset G$ be a
finite generating system.
We denote by $\Gamma_S(G)$ the Cayley graph of $G$
with respect to $S$. We define for every edge $e$ the edge length
to be $L(e) = 1$.

\smallskip

Let $\mathcal H = (H_1,\ldots, H_r)$ be a finite family of subgroups
of $G$, where in the context of this paper the $H_{i}$ are usually
finitely generated.

\begin{definition}
\label{conedgraph}
The {\em $\mathcal H$-coned Cayley graph}, denoted by
$\Gamma^{{\mathcal H}}_S(G)$, is the graph obtained from
$\Gamma_S(G)$ as follows:

\begin{enumerate}
  \item We add an {\em exceptional vertex}
  $v(gH_i)$, for
  each
  coset $g H_i$ of any of the
  $H_{i}$.

  \item We add an edge of length $\frac{1}{2}$ connecting
  any vertex
  $g$ of $\Gamma_S(G)$
  to any of the exceptional vertices
  $v(gH_i)$.
\end{enumerate}

We denote by $| \cdot |_{S, \mathcal H}$ the minimal word length on
$G$, with respect to the (possibly infinite) generating system given
by the finite set $S$ together with the union of all the subgroups
in $\mathcal H$. It follows directly from the definition of the
above lengths that for any two non-exceptional vertices $g, h \in
\Gamma^{{\mathcal H}}_S(G)$ the distance is given by:
$$d(g, h) = \,\, | g^{-1} h |_{S, \mathcal H}$$
\end{definition}

\begin{definition}
\label{relativehyperbolicity} \rm Let $G$ be a group with a finite
generating system $S \subset G$, and let $\mathcal H =
(H_1,\ldots,H_r)$ be a finite family of finitely generated subgroups
$H_i$ of $G$.

\begin{enumerate}
\item[(1)]
The group $G$ is {\em weakly hyperbolic relatively to $\mathcal H$}
if the $\mathcal H$-coned Cayley graph
$\Gamma^{{\mathcal H}}_S(G)$ is $\delta$-hyperbolic, for
some $\delta \geq 0$.

\item[(2)]
The group $G$ is {\em strongly hyperbolic relatively to $\mathcal
H$} if the graph $\Gamma^{{\mathcal H}}_S(G)$ is $\delta$-hyperbolic and fine.
\end{enumerate}

It is easy to see that these definitions are independent of the choice
of
the finite generating system
$S$.
\end{definition}

\begin{definition}
\label{malnormalfamily}
A finite family
$\mathcal{H} = (H_1,\ldots,H_r)$ of subgroups of a group $G$
is called {\em malnormal} if:
\begin{enumerate}
  \item[(a)] for any $i \in \{1,\ldots, r\}$ the subgroup
  $H_i$ is malnormal in $G$ \, (i.e. $g^{-1} H_i g \cap H_i = \{1\}$
  for any $g \in G\smallsetminus H_i$), and
  \item[(b)] for any $i,j \in \{1,\cdots,r\}$ with $i \neq j$, and for any $g
  \in G$, one has
$g^{-1} H_i g \cap H_j = \{1\}$.
\end{enumerate}
\end{definition}

This definition is stable with respect to permutation of the
$H_{i}$, or replacing some $H_{i}$ by a conjugate.

 However, we would
like to alert the reader that, contrary to many concepts used in
geometric group theory, malnormality of a subgroup family
$\mathcal{H} = (H_1,\ldots, H_r)$ of a group $G$ is not stable with
respect to the usual modifications of $\mathcal H$ that do not
change the geometry of $G$ relative to $\mathcal H$ up to
quasi-isometry. Such modifications are, for example, (i) the
replacement of some $H_{i}$ by a subgroup of finite index, or (ii)
the addition of a new subgroup $H_{r+1}$ to the family which is
conjugate to a subgroup of some of the ``old'' $H_{i}$, etc.
Malnormality, as can easily been seen, is sensible with respect to
such changes: For example the infinite cyclic group $\Z$ contains
itself as malnormal subgroup, while the finite index subgroup $2\Z
\subset \Z$ is not malnormal. Similarly, we verify directly that
with respect to the standard generating system $S = \{ 1 \}$ the
coned off Cayley graph $\Gamma_{S}^{2\Z}$ is not fine. This
underlines the well known but often not clearly expressed fact that
the notion of strong relative hyperbolicity (i.e.
``$\delta$-hyperbolic + fine'') is not invariant under
quasi-isometry of the coned off Cayley graphs (compare also
\cite{DRUTU}), contrary to the otherwise less useful notion of weak
relative hyperbolicity.

\medskip

The following lemma holds for any hyperbolic group $G$, compare
\cite{Bo}. In the case used here, where $G = \FN$ is a free group,
the proof is indeed an exercise.

\begin{lemma}
\label{classique}
Let $G$ be a hyperbolic group, and let
$\mathcal{H} = (H_1,\ldots,H_r)$ be a finite family of finitely
generated subgroups.

\smallskip
\noindent
(1)
If the family $\mathcal H$
consists of quasi-convex subgroups, then
 $G$ is weakly hyperbolic relative to $\mathcal H$.

 \smallskip
\noindent
(2)
If the family $\mathcal H$
is quasi-convex and malnormal, then
 $G$ is strongly hyperbolic relative to $\mathcal H$.
\qed
\end{lemma}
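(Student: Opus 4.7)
The plan is to treat both claims by exploiting that the Cayley graph $\Gamma_{S}(G)$ is already $\delta$-hyperbolic. The preliminary observation common to both parts is that since each $H_{i}$ is $K$-quasi-convex in $\Gamma_{S}(G)$ and $G$ acts on $\Gamma_{S}(G)$ by isometries via left multiplication, every left coset $gH_{i}$ is also $K$-quasi-convex with a uniform constant independent of $g$ and $i$. The family of all such cosets is therefore a uniformly quasi-convex collection of subsets of the hyperbolic space $\Gamma_{S}(G)$.

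For part (1), I would invoke the general principle, going back to Gromov \cite{Gromov} and extracted cleanly by Farb \cite{Fa}, that coning off a family of uniformly quasi-convex subsets in a $\delta$-hyperbolic geodesic space yields a hyperbolic space. The standard argument shows that any geodesic in $\Gamma^{\mathcal{H}}_{S}(G)$ can be ``unfolded'' into a path in $\Gamma_{S}(G)$ whose $S$-segments form $(\lambda,c)$-quasi-geodesics for constants depending only on $\delta$ and $K$. Triangles in the coned graph thereby unfold to uniformly thin quasi-geodesic triangles in $\Gamma_{S}(G)$, and re-folding shows the original triangles are uniformly thin in $\Gamma^{\mathcal{H}}_{S}(G)$ as well, the extra slack needed to traverse coned subsets being controlled by $K$.

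For part (2), the new ingredient is malnormality. The key intermediate statement is a bounded coset intersection property (BCI): for every $D \geq 0$ there exists $N(D) \geq 0$ such that whenever $gH_{i}$ and $g'H_{j}$ are distinct labelled cosets, one has
\[
\mathrm{diam}_{\Gamma_{S}(G)}\bigl( \mathcal{N}_{D}(gH_{i}) \cap \mathcal{N}_{D}(g'H_{j}) \bigr) \leq N(D).
\]
The malnormality conditions of Definition \ref{malnormalfamily} directly give $|gH_{i} \cap g'H_{j}| \leq 1$ for any two distinct labelled cosets: if $i = j$ the cosets are disjoint, while if $i \neq j$ two common elements would produce a non-trivial element of $H_{i}\cap H_{j}$, contradicting condition~(b) with $g = 1$. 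The standard thin-bigon argument in $\Gamma_{S}(G)$, applied to two $K$-quasi-convex sets with singleton intersection, then upgrades this to the uniform neighborhood bound $N(D)$.

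Fineness of $\Gamma^{\mathcal{H}}_{S}(G)$ is deduced from BCI by a combinatorial counting argument. Given an edge $e$ and a length bound $n$, any simple circuit of coned length at most $n$ through $e$ decomposes into an alternating sequence of $S$-segments and detours through pairwise distinct exceptional vertices $v(g_{\ell}H_{i_{\ell}})$. BCI forces consecutive cosets along the circuit to overlap only in a controlled way, and iterating shows that the total $\Gamma_{S}(G)$-length of all $S$-segments is bounded by a constant depending only on $n$, $\delta$ and $K$. Local finiteness of $\Gamma_{S}(G)$ then leaves only finitely many such circuits. The main obstacle, and the most delicate point, is precisely this conversion of control on the \emph{coned} length of a circuit into control on its \emph{Cayley} length: without combining the simple-circuit hypothesis with BCI, arbitrarily long Cayley segments could hide between short coned detours, so closing this loophole is where both hypotheses of quasi-convexity and malnormality are genuinely used.
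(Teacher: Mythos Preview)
Your proposal is essentially correct and, in fact, provides considerably more detail than the paper itself. The paper gives no proof of this lemma: it simply appends a \qed\ to the statement, refers the reader to Bowditch \cite{Bo} for the general hyperbolic case, and remarks that in the only case actually used later (namely $G=\FN$, so that the Cayley graph is a tree) the proof is ``an exercise''. So there is no paper-proof to compare your argument against, only a citation.

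As for the argument you sketch: part~(1) is exactly the standard coning-off-quasi-convex-subsets principle, and your formulation is fine. For part~(2), your reduction to a bounded coset intersection property (BCI) is the right strategy and is precisely what underlies Bowditch's treatment. The one place where your write-up is genuinely sketchy is the passage from BCI to fineness: you assert that ``iterating'' BCI bounds the total Cayley length of the $S$-segments in a simple circuit of coned length~$\leq n$, but the mechanism is not spelled out. The difficulty is that a detour through an exceptional vertex $v(C_\ell)$ connects entry and exit points $p_\ell,q_\ell\in C_\ell$ that may be far apart in $\Gamma_S(G)$; bounding $d_S(p_\ell,q_\ell)$ requires an argument showing that the unfolded closed path in $\Gamma_S(G)$ forces the quasi-geodesic $[p_\ell,q_\ell]\subset\mathcal N_K(C_\ell)$ to fellow-travel other segments, and BCI then caps each pairwise overlap with a distinct $C_{\ell'}$ by $N(K)$. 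In the tree case $G=\FN$ this becomes transparent (every closed edge-path in a tree cancels completely, and BCI bounds how much any two coset-subtrees can share), which is presumably why the authors call it an exercise. For general hyperbolic $G$ one replaces exact cancellation by a thin-polygon argument; this works, but it is the step that deserves a reference to \cite{Bo} rather than the phrase ``iterating shows''.
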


For any $\alpha \in \Aut{G}$, for any group $G$, a family of
subgroups $\mathcal H = (H_1,\ldots, H_r)$ is called {\em
$\alpha$-invariant up to conjugation} if there is a permutation
$\sigma$ of $\{1, \ldots, r\}$ as well as elements $h_{1}, \ldots,
h_{r} \in G$ such that $\alpha(H_k) = h_k H_{\sigma(k)} h^{-1}_k$
for each $k \in \{1, \ldots, r\}$.

\smallskip

The following notion has been proposed by Gromov \cite{Gromov} in the
absolute case (i.e. all $H_{i}$ are trivial) and generalized
subsequently in \cite{Galast}.

\begin{definition}
\label{hyperbolicauto}
Let $G$ be a group generated by a finite subset $S$, and let $\mathcal H$ be a
finite family of subgroups of $G$. An automorphism $\alpha$ of $G$ is {\em
hyperbolic relative to $\mathcal H$}, if ${\mathcal H}$ is
$\alpha$-invariant up to conjugation and if there exist constants $\lambda
> 1, M \geq 0$ and $N \geq 1$
such that for any $w \in G$ with $| w |_{S, \mathcal H} \, \, \geq
M$ one has:
$$\lambda | w |_{S, \mathcal H} \, \, \, \leq \, \, \,
\mathrm{max}\{\, | \alpha^{N}(w) |_{S, \mathcal H}\, , \, |
\alpha^{-N}(w) |_{S, \mathcal H}\,\}$$
\end{definition}

The concept of a relatively hyperbolic automorphism is a fairly
``stable'' one, as shown by the following remark:

\begin{remark}
\label{stablerelhyp} Let $G, S, \mathcal{H}$ and $\alpha$ be as in
Definition \ref{hyperbolicauto}. The following  statements can be
derived directly from this definition.

\begin{enumerate}
\item[(a)]
The condition stated in Definition \ref{hyperbolicauto} is independent
of the particular choice of the finite generating system $S$.

\item[(b)]
The automorphism $\alpha$ is hyperbolic relative to $\mathcal H$ if
and only if $\alpha^m$ is hyperbolic relative to $\mathcal H$, for any
integer $m \geq 1$.

\item[(c)]
The automorphism $\alpha$ is hyperbolic relative to $\mathcal H$ if
and only if
$\alpha' = \iota_{v} \circ \alpha$ is hyperbolic relative to $\mathcal
H$, for any inner automorphisms
$\iota_{v}: \FN \to \FN,  w \mapsto v w v^{-1}$.

\end{enumerate}
\end{remark}

Every automorphism $\alpha$ of any group $G$ defines a semi-direct
product
$$G_{\alpha} = G \rtimes_{\alpha} \mz
=  G \, * < t > / << t g t^{-1} = \alpha(g)
\mbox{ for all } g \in G >>$$
which is called the {\em mapping torus group} of $\alpha$.
In our case, where $G = \FN$, one has
$$G_{\alpha} =
\FN \rtimes_{\alpha} \mz = \, \, <x_1,\ldots,x_n,t \mid t x_i t^{-1}
= \alpha(x_i) \mbox{ for all } i=1,\ldots,n>$$ It is well known and
easy to see that this group depends, up to isomorphisms which leave
the subgroup $G \subset G_{\alpha}$ elementwise fixed, only on the
outer automorphism defined by $\alpha$.

Let ${\mathcal H} = (H_1,\ldots,H_r)$ be a finite family of
subgroups of $G$ which is $\alpha$-invariant up to conjugacy. For
each $H_{i}$ in $\mathcal H$ let $m_{i} \geq 1$ be the smallest
integer such that $\alpha^{m_{i}}(H_{i})$ is conjugate in $G$ to
$H_{i}$, and let $h_{i}$ be the conjugator: $\alpha^{m_{i}}(H_{i}) =
h_{i} H_{i} h_{i}^{-1}$. We define the {\em induced mapping torus
subgroup}:
$$
H^\alpha_{i} =\, \,
<H_{i}, h^{-1}_i t^{m_i} > \, \, \subset  \,  G_{\alpha}$$

It is not hard to show that two subgroups $H_{i}$ and $H_{j}$ of $G$
are, up to conjugation in $G$, in the same $\alpha$-orbit if and
only if the two induced mapping torus subgroups $H^\alpha_{i}$ and
$H^\alpha_{j}$ are conjugate in the mapping torus subgroup
$G_{\alpha}$. (Note also that in a topological realization of
$G_{\alpha}$, for example as a fibered 3-manifold, the induced
fibered submanifolds, over an invariant collection of disjoint
subspaces with fundamental groups $H_{i}$, correspond precisely to
the conjugacy classes of the $H^\alpha_{i}$.)

\begin{definition}
\label{inducedmappingtorus} Let ${\mathcal H} = (H_1,\ldots,H_r)$ be
a finite family of subgroups of $G$ which is $\alpha$-invariant up
to conjugacy. A family of induced mapping torus subgroups
$${\mathcal H}_\alpha =
(H^\alpha_{1}, \ldots, H^\alpha_{q})$$ as above is the {\em mapping
torus of $\mathcal H$ with respect to $\alpha$} if it contains for
each conjugacy class in $G_{\alpha}$ of any $H^\alpha_{i}$, for $i =
1, \ldots, r$, precisely one representative.
\end{definition}

\smallskip

The following Combination Theorem has been proved by the
first author. For a
reproof using somewhat different methods compare also \cite{fumiers}.

\begin{theorem}[\cite{Galast}]
\label{montheoreme} Let $G$ be a finitely generated group, let
$\alpha \in \Aut{G}$ be an automorphism, and let $G_{\alpha} = G
\rtimes_{\alpha} \mz$ be the mapping torus group of $\alpha$. Let
${\mathcal H} = (H_1,\ldots, H_r)$ be a finite family of finitely
generated subgroups of $G$, and suppose that $\alpha$ is hyperbolic
relative to $\mathcal H$.

\begin{enumerate}
\item[(a)]
If $G$ is weakly hyperbolic relative to $\mathcal H$, then
$G_\alpha$ is weakly hyperbolic relative to $\mathcal H$.

\item[(b)]
If $G$ is strongly hyperbolic relative to $\mathcal H$, then
$G_{\alpha}$ is strongly hyperbolic relative to
the
mapping
torus
${\mathcal H}_\alpha$ of $\mathcal H$ with respect to $\alpha$.
\end{enumerate}
\end{theorem}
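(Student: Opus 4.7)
\textbf{Proof proposal for Theorem \ref{montheoreme}.}
The natural framework is a relatively-hyperbolic version of the Bestvina--Feighn combination theorem. The plan is to realise $G_{\alpha}$ as acting on a tree of relatively hyperbolic spaces and then verify hyperbolicity (and fineness) of the total space from the hypotheses. First, I would write $G_{\alpha}$ as the HNN extension $G *_{G}$ with stable letter $t$, associated subgroups both equal to $G$, and the inclusions given by the identity and by $\alpha$. Its Bass--Serre tree $T$ has a single $G_{\alpha}$-orbit of vertices and edges, each stabilised by a conjugate of $G$. Equivariantly, I would replace every vertex by a copy of the coned-off Cayley graph $\Gamma^{\mathcal H}_{S}(G)$ (taking $\mathcal H$ in case~(b), and the uncones off $\mathcal H$ paired with the ambient $S$ in case~(a)), and every edge of $T$ by a ``mapping strip'' that encodes the identification through $\alpha$. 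This produces a $2$-complex $X$ on which $G_{\alpha}$ acts, whose quotient is compact and which carries a natural ``horizontal/vertical'' decomposition: horizontal directions live inside a vertex space and are $G$-like, vertical directions correspond to travelling along $t$.

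Next I would check the three usual ingredients of the combination theorem in the relative setting: vertex spaces are $\delta$-hyperbolic by the hypothesis that $G$ is (weakly, resp.\ strongly) hyperbolic relative to $\mathcal H$; edge spaces embed isometrically into the adjacent vertex spaces, since they literally are the vertex spaces. The crucial ingredient is a \emph{flaring condition} for horizontal quasi-geodesics inside $X$: for every sufficiently long horizontal path there must exist $N$ such that at least one of its $t^{\pm N}$-translates, rewritten in the adjacent vertex space via $\alpha^{\pm N}$, has proportionally larger length. But this is exactly what Definition~\ref{hyperbolicauto} delivers: the inequality
\[
\lambda\,|w|_{S,\mathcal H}\;\leq\;\max\bigl\{\,|\alpha^{N}(w)|_{S,\mathcal H},\,|\alpha^{-N}(w)|_{S,\mathcal H}\,\bigr\}
\]
is, up to bounded error, the annuli-flaring of Bestvina--Feighn measured in the coned-off metric. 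With flaring in hand, a Mihalik--Bestvina--Feighn style argument gives (a): the universal cover of $X$ is $\delta'$-hyperbolic, and since it is $G_{\alpha}$-equivariantly quasi-isometric to $\Gamma^{\mathcal H}_{S\cup\{t\}}(G_{\alpha})$, the latter is hyperbolic too.

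For (b) the strategy has to be upgraded so that one cones off $\mathcal H_{\alpha}$ rather than $\mathcal H$ in the total space. Geometrically, a coset $g H^{\alpha}_{i}$ corresponds to the full $\langle h_{i}^{-1} t^{m_{i}}\rangle$-invariant tube in $X$ obtained by stacking vertically the $\alpha$-orbit of a single $H_{i}$-coset; coning it off therefore collapses both the horizontal $H_{i}$-direction and the vertical direction along $t^{m_{i}}$. Once this is done, hyperbolicity follows as in (a), and the remaining task is fineness. I would derive it by showing that any circuit in $\Gamma^{\mathcal H_{\alpha}}_{S\cup\{t\}}(G_{\alpha})$ of bounded length lifts to a bounded-complexity loop in $X$ crossing only finitely many edges of $T$; fineness of the vertex spaces $\Gamma^{\mathcal H}_{S}(G)$ then bounds the horizontal contribution, while the $\alpha$-invariance of $\mathcal H$ up to conjugacy, the definition of $\mathcal H_{\alpha}$, and standard Bass--Serre arguments bound the vertical contribution.

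The step I expect to be the hardest is the fineness part of~(b). Hyperbolicity follows from flaring by a now-classical machinery, but fineness of the coned-off Cayley graph of $G_{\alpha}$ is very sensitive to exactly which subgroups one cones off: one must prove a malnormality-type statement for $\mathcal H_{\alpha}$ inside $G_{\alpha}$ (so that distinct exceptional cosets do not share long horizontal/vertical overlaps), and one must rule out the proliferation of short circuits created by the interaction between the horizontal $\mathcal H$-cones and the vertical $t$-action. Controlling these interactions is precisely the reason why passing from $\mathcal H$ to $\mathcal H_{\alpha}$ is necessary, and it is where the careful choice of the exponent $m_{i}$ and the conjugator $h_{i}$ in Definition~\ref{inducedmappingtorus} pays off.
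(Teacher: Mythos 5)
The paper does not prove Theorem \ref{montheoreme}: the statement is imported verbatim from \cite{Galast} (with \cite{fumiers} offered as an independent reproof) and is used as a black box in \S 8, so there is no in-paper argument to compare your sketch against. What you have written is instead a plausible reconstruction of the route taken in those references: a relative Bestvina--Feighn combination theorem over the Bass--Serre tree of the HNN splitting $G_{\alpha}=G*_{G,\alpha}$ (a line, with $t$ acting by translation), vertex spaces the coned-off Cayley graphs $\Gamma^{\mathcal H}_{S}(G)$, and flaring supplied precisely by Definition \ref{hyperbolicauto}. That identification of the relative hyperbolicity of $\alpha$ with the hallways-flare condition in the coned-off metric is the correct key observation.

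Two caveats. First, the edge-space inclusions are the identity on one side but $\alpha$ on the other; since $\alpha$ only permutes $\mathcal H$ up to conjugation, these are quasi-isometric (not isometric) embeddings of coned-off spaces, so ``embed isometrically'' overstates the situation, though the combination machinery only needs the QI version. Second, and more substantively, for fineness in part (b) your proposal stops at a plan: you state that one must prove a malnormality-type statement for $\mathcal H_{\alpha}$ and control short circuits mixing horizontal cones with the vertical $t$-direction, but you do not carry this out. Note that almost-malnormality of $\mathcal H_{\alpha}$ in $G_{\alpha}$ is not an extra hypothesis one needs to impose — it follows from strong relative hyperbolicity of $G$ rel.\ $\mathcal H$ together with the choice of $m_i$ and $h_i$ in Definition \ref{inducedmappingtorus} — but deriving it and turning it into a uniform bound on circuits through an exceptional vertex of $\Gamma^{\mathcal H_{\alpha}}_{S\cup\{t\}}(G_{\alpha})$ is exactly the technical core of \cite{Galast}/\cite{fumiers}; that part is the genuine gap between your sketch and a proof.
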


\section{Polynomial growth subgroups}
\label{polynomialgrowth}

Let $\alpha \in \Aut{\F{n}}$ be an automorphism of $\FN$. A subgroup
$H$ of $\F{n}$ is {\em of polynomial $\alpha$-growth} if every
element $w \in H$ is {\em of polynomial $\alpha$-growth}: there are
constants $C > 0,\,  d \geq 0$ such that the inequality
$$|| \alpha^t(w) || \, \, \, \leq \, \, \, C t^d $$
holds for all integers $t \geq 1$, where $|| w ||$ denotes the
cyclic length of $w$ with respect to some basis of $\FN\,$. Of
course, passing over to another basis (or, for the matter, to any
other finite generating system of $\FN$) only affects the constant
$C$ in the above inequality.

\smallskip

We verify easily that, if $H \subset \FN$ is a subgroup of
polynomial $\alpha$-growth, then it
is also of polynomial $\beta^k$-growth, for any $k \in \mz$ and
 any $\beta \in \Aut{\F{n}}$ that represents the same outer
automorphisms as $\alpha$. Also, any conjugate subgroup $H' = g H
g^{-1}$ is also of polynomial growth.

\medskip

A family of polynomially growing subgroups
${\mathcal H} = (H_1,\cdots,H_r)$ is called {\em exhaustive} if
every element $g \in \FN$ of polynomial growth is conjugate to an
element contained in some of the $H_{i}$. The family $\mathcal H$ is
called {\em minimal} if no $H_{i}$ is a subgroup of any
conjugate of some $H_{j}$ with $i \neq j$.

\smallskip

The following proposition
is well known (compare \cite{gjll}).
For completeness we state
it
in full generality, although some
ingredients (for example ``very small'' actions) are not specifically
used here. The paper \cite{lu2} may serve as an introductionary text
for the objects concerned.

\begin{proposition}
\label{invarianttree}
Let  $\alpha \in \Aut \FN$ be an arbitrary automorphism of $\FN$.
Then either the whole group $\FN$ is of polynomial $\alpha$-growth, or
else there is a very small action of $\FN$ on some $\R$-tree $T$ by
isometries, which has the following properties:

\begin{enumerate}
\item[(a)]
The $\FN$-action on $T$ is $\alpha$-invariant with respect to a
stretching factor $\lambda > 1\,$: one has
$$|| \alpha(w) ||_{T} \, \, \, = \, \, \,
\lambda || w ||_{T}$$ for all $w \in \FN$, where $|| w ||_{T}$
denotes the translation length of $w$ on $T$, i.e. the value given
by $|| w ||_{T} \, \, := \inf \{d(wx, x) \mid x \in T \}$.

\item[(b)]
The stabilizer in $\FN$ of any non-degenerate arc in $T$ is trivial:
$$Stab([x, y]) = \{ 1 \}  \qquad \hbox{for all} \qquad x \neq y \in T$$

\item[(c)]
There are only finitely many orbits $\FN \cdot x$ of points $x \in
T$ with non-trivial stabilizer $Stab(x) \subset \FN$. In particular,
the family of such stabilizers $H_{k} = Stab(x_{i})$, obtained by
choosing an arbitrary point $x_{i}$ in each of these finitely many
$\FN$-orbits, is $\alpha$-invariant up to conjugation.

\item[(d)]
For every $x \in T$ the rank of the
point stabilizer $Stab(x)$ is strictly smaller than $n$.
\end{enumerate}
\end{proposition}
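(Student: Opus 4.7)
The plan is to invoke the rescaled-limit construction of an invariant $\R$-tree for an exponentially growing free group automorphism, due essentially to \cite{gjll}. First I would dispose of the trivial case: if every element of $\FN$ grows polynomially under $\alpha$, there is nothing to prove. So assume there exists $w \in \FN$ with $||\alpha^{t}(w)||$ exponentially growing in $t$. By the Bestvina--Handel theory of (relative) train tracks, $\alpha$ admits a relative train track representative $f : G \to G$, and the highest exponential stratum has Perron--Frobenius eigenvalue $\lambda > 1$; this $\lambda$ will be the stretching factor in property (a).

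Next I would build $T$ as a projective limit of rescaled length functions. Let $\tilde{G}$ denote the universal cover of $G$ endowed with a simplicial metric for which $f$ lifts to an equivariant map, and consider the sequence of length functions
\[ \ell_{t}(w) \,:=\, \lambda^{-t}\, ||\alpha^{t}(w)||_{G} \]
on $\FN$. Bounded cancellation for the train track map $f$, together with the Perron--Frobenius expansion on the top stratum, guarantees that $\ell_{t}$ converges (after passing to a subsequence if necessary) to a non-zero length function $\ell_{\infty}$, satisfying $\ell_{\infty}(\alpha(w)) = \lambda\, \ell_{\infty}(w)$. By the Culler--Morgan realization theorem, $\ell_{\infty}$ is the translation length function of an isometric $\FN$-action on a minimal $\R$-tree $T$, which proves (a) immediately.

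The remaining work is to verify (b), (c), (d), and to check that the action is very small. For (b), trivial arc stabilizers follow from the Rips machine analysis of $T$: any non-trivial arc stabilizer would yield a line in $G$ on which the top exponential stratum fails to mix under iteration, contradicting the primitivity of the Perron--Frobenius transition matrix on the top stratum. For (c), non-trivial point stabilizers of $T$ correspond to (fundamental groups of) the non-contractible components of the strata \emph{below} the top exponential one, of which there are finitely many; their $f$-invariance up to conjugacy translates into $\alpha$-invariance up to conjugation of the family $\{H_{k}\}$. Property (d) follows because any exponentially growing element of $\FN$ acts hyperbolically on $T$, so in particular cannot lie in a point stabilizer; hence $Stab(x)$ is a proper subgroup of $\FN$ consisting entirely of polynomially growing elements, which by a standard Grushko-type rank argument (or by the Bestvina--Feighn--Handel bound on ranks of polynomially growing subgroups) has rank strictly less than $n$.

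The main obstacle is the fine-structure analysis underlying (b) and (c): namely, identifying point stabilizers with stabilizers of the appropriate sub-strata of the train track, and ruling out non-trivial arc stabilizers via the mixing of the top stratum. This is precisely the content of the detailed Rips-machine analysis performed in \cite{gjll}, to which the verification can be reduced once the construction of $T$ above is in place.
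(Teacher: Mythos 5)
The paper itself gives no proof of Proposition~\ref{invarianttree}; it cites it as ``well known (compare \cite{gjll})'', so the relevant comparison is with the rescaled-limit construction of an invariant $\R$-tree from a relative train track representative, which is exactly the machinery you invoke. Your overall skeleton --- dispose of the polynomial case, take the top exponential stratum with Perron--Frobenius eigenvalue $\lambda > 1$, form $\ell_t(w) = \lambda^{-t}\|\alpha^t(w)\|$, pass to a limit length function, and realize it via Culler--Morgan --- is the right one and matches the cited source in spirit.

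There is, however, a genuine error in your justification of~(d). You claim that ``any exponentially growing element of $\FN$ acts hyperbolically on $T$, so in particular cannot lie in a point stabilizer; hence $Stab(x)$ is a proper subgroup of $\FN$ consisting entirely of polynomially growing elements.'' This is false. The tree $T$ only detects growth at the \emph{top} rate $\lambda$: if $w$ is carried by a lower exponential stratum with Perron--Frobenius eigenvalue $\mu$ satisfying $1 < \mu < \lambda$, then $\lambda^{-t}\|\alpha^t(w)\| \to 0$, so $\|w\|_T = 0$ and $w$ is elliptic --- and indeed lands in a point stabilizer. The iterative procedure the paper sets up immediately after this proposition (applying it again to the stabilizers $H_k$, with $\alpha$ replaced by a suitable power restricted to $H_k$) explicitly presupposes that the $H_k$ may still contain exponentially growing elements; if they were already of polynomial growth there would be nothing to iterate. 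So neither branch of your argument for~(d) works: the stabilizers are \emph{not} polynomially growing in general, and a Grushko-type count does not apply. The correct justification is the rank (or index) bound of Gaboriau--Levitt \cite{gl} for very small $\R$-tree actions of $\FN$ with trivial arc stabilizers: given (b) and non-triviality of the action, every point stabilizer has rank strictly less than $n$.

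A smaller point: your argument for~(b) via ``mixing of the top stratum'' is rather vague. The standard route (as in \cite{gjll}, via Gaboriau--Levitt) is through bounded cancellation and the fact that the limit action has dense orbits; this is recoverable but as written does not constitute a proof. Similarly, you assert convergence of $\ell_t$ ``after passing to a subsequence if necessary,'' but for the stated equivariance $\ell_\infty(\alpha(w)) = \lambda\,\ell_\infty(w)$ one wants actual convergence, which requires normalizing with a row eigenvector of the transition matrix rather than just passing to a subsequence. None of this changes the overall strategy, which is sound, but the verification of~(d) as you wrote it does not go through.
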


We now define a finite iterative procedure, in order to identify all
elements in $\FN$ which have polynomial $\alpha$-growth: One applies
Proposition \ref{invarianttree} again to the non-trivial point
stabilizers $H_{k}$ as exhibited in part (c) of this proposition,
where $\alpha$ is replaced by the restriction to $H_{k}$ of a
suitable power of $\alpha$, composed with an inner automorphism of
$\FN$. By Property (d) of Proposition \ref{invarianttree}, after
finitely many iterations this procedure must stop, and thus one
obtains a partially ordered finite collection of such invariant
$\R$-trees $T_{j}$.

In every tree $T_{j}$ which is minimal in this
collection,
we
choose a point in each of the finitely many orbits with non-trivial
stabilizer, to obtain a finite family $\mathcal H$
of finitely generated
subgroups $H_{i}$ of $\FN$. It follows directly from this definition
that every $H_{i}$ has
polynomial $\alpha$-growth, and that the family $\mathcal H$
is $\alpha$-invariant up to conjugation.

\smallskip

The family $\mathcal H$ is exhaustive, as, in each of the $T_{j}$,
any path of non-zero length grows exponentially, by property (a) of
Proposition \ref{invarianttree}. From property (b) we derive the
minimality of $\mathcal H$: Indeed, we obtain the stronger property,
that any two conjugates of distinct $H_{i}$ can intersect only in
the trivial subgroup $\{1\}$ (see Proposition \ref{malnormal}).

\smallskip

It follows that the family $\mathcal H$ is uniquely determined (by
exhaustiveness and minimality), contrary to the above collection of
invariant trees $T_{j}$, which is non-unique, as the tree $T$ in
Proposition \ref{invarianttree} is in general not uniquely
determined by $\alpha$. The different choices, however, are well
understood: a brief survey of the underlying structural
analysis of
$\alpha$ is given in \S \ref{structure-of-autos} of the Appendix.

\smallskip

We
summarize:

\begin{proposition}
\label{polynomialfamily}
(a)  Every automorphism $\alpha \in \Aut{\F{n}}$ possesses
a finite family
${\mathcal H}(\alpha) = (H_1, \ldots , H_r)$
of finitely generated
subgroups $H_{i}$ that are of polynomial growth, and ${\mathcal H}(\alpha)$
is
exhaustive and minimal.

\smallskip
\noindent
(b)  The family ${\mathcal H}(\alpha)$
is uniquely determined, up to permuting the $H_{i}$ or
replacing any $H_{i}$ by a conjugate.

\smallskip
\noindent (c)  The family ${\mathcal H}(\alpha)$ is
$\alpha$-invariant. \qed
\end{proposition}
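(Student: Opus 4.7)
The plan is to formalize the iterative construction already sketched in the paragraphs preceding the statement, reducing everything to repeated application of Proposition \ref{invarianttree} together with the rank-decreasing property (d) as the induction parameter. Part (c) of the proposition is essentially built into the construction; the real work is establishing existence of a family that is simultaneously exhaustive and minimal, and then deducing uniqueness from this characterization.

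For existence (part (a)), I would induct on the rank $n$. If $\FN$ itself has polynomial $\alpha$-growth, take $\mathcal H(\alpha) = (\FN)$. Otherwise, Proposition \ref{invarianttree} supplies a very small $\FN$-action on an $\R$-tree $T$ with stretching factor $\lambda > 1$, trivial arc stabilizers, and finitely many orbits $\FN \cdot x_i$ of points with non-trivial stabilizer $H_i^{(1)} = \mathrm{Stab}(x_i)$. By (c) the collection $(H_i^{(1)})$ is $\alpha$-invariant up to conjugation; choosing an exponent $m_i \geq 1$ and a conjugator realizing this, a power of $\alpha$ composed with an inner automorphism restricts to an automorphism of $H_i^{(1)}$. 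By (d), each $H_i^{(1)}$ has rank strictly less than $n$, so the induction hypothesis provides a family $\mathcal H(\alpha \rest{H_i^{(1)}})$; the union of these (viewed inside $\FN$) forms $\mathcal H(\alpha)$.

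Exhaustiveness follows from property (a): if $w \in \FN$ has $\|w\|_T \neq 0$, then $\|\alpha^t(w)\|_T = \lambda^t \|w\|_T$ grows exponentially, and since translation length on $T$ bounds word length from below up to a Lipschitz factor, such $w$ cannot be of polynomial growth. Hence every polynomially growing $w$ has $\|w\|_T = 0$, and since the $\FN$-action on $T$ is very small, $w$ either is trivial or fixes a point, so is conjugate into some $H_i^{(1)}$; now apply the induction hypothesis. Minimality follows from property (b): if $g H_i g^{-1} \cap H_j$ contained a non-trivial element $w$ with $i \neq j$, then tracing back through the construction, $w$ would fix two distinct points of some $T_k$ and hence stabilize a non-degenerate arc, contradicting (b). This in fact yields the sharper statement that any two conjugates of distinct $H_i$ meet only in $\{1\}$, which is the content of Proposition \ref{malnormal} referenced later. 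Uniqueness (part (b)) is then formal: given another exhaustive and minimal family $\mathcal H'$, each $H_i \in \mathcal H$ is conjugate into some $H_j' \in \mathcal H'$ (by exhaustiveness of $\mathcal H'$) and symmetrically; minimality of both families forces the bijection to be a permutation matching up conjugates.

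The main obstacle is the non-uniqueness of the invariant trees $T_j$ appearing in the iteration (and indeed the non-uniqueness of $T$ itself in Proposition \ref{invarianttree}). The argument therefore cannot be phrased intrinsically in terms of stabilizers in any particular choice of tree; instead, the exhaustiveness/minimality characterization is what lets the construction be independent of choices. A secondary technical point is that passing from $\alpha$ on $\FN$ to a suitable power composed with an inner automorphism on $H_i^{(1)}$ must not alter the class of polynomially growing elements, which is exactly what is recorded in the remark that polynomial growth is preserved under taking powers, inner composition, and restriction to invariant subgroups.
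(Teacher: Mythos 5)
Your proposal is correct and takes essentially the same route as the paper, whose ``proof'' is the iterative construction spelled out in the paragraphs immediately preceding the statement (the proposition is closed with a \qed and has no separate proof block). Both arguments iterate Proposition~\ref{invarianttree} using property~(d) as the termination/induction parameter, derive exhaustiveness from property~(a) and minimality (indeed the stronger malnormality) from property~(b), and then deduce uniqueness formally from exhaustiveness plus minimality; your explicit induction on rank is just a cleaner packaging of the paper's ``finite iterative procedure.''
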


The family ${\mathcal H}(\alpha) = (H_1,\cdots,H_r)$
exhibited by Proposition
\ref{polynomialfamily} is called the {\em characteristic family of
polynomial growth} for $\alpha$.
This terminology is slightly exaggerated, as the $H_{i}$ are really
only well determined up to conjugacy in
$\FN$. But on the other hand, the whole
concept of a group $G$ relative to a finite family of subgroups
$H_{i}$ is in
reality a concept of $G$ relative to a conjugacy class of subgroups
$H_{i}$, and it is only for notational simplicity that one prefers to
name the subgroups $H_{i}$ rather than their conjugacy classes.

\begin{proposition}
\label{malnormal} For every automorphism $\alpha \in \Aut{\F{n}}$
the characteristic family of polynomially growing subgroups
$\mathcal H(\alpha)$ is quasi-convex and malnormal.
\end{proposition}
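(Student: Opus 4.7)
\emph{Quasi-convexity} of the family is immediate: each $H_{i}$ is finitely generated by Proposition~\ref{polynomialfamily}(a), and finitely generated subgroups of the free group $\FN$ are quasi-convex (a classical consequence of Stallings' theory of finite core graphs, together with the fact that the Cayley graph of $\FN$ is a $0$-hyperbolic tree).

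For \emph{malnormality} the plan is to unwind the iterative $\R$-tree construction that produces $\mathcal{H}(\alpha)$ and, at each level of the iteration, to invoke property~(b) of Proposition~\ref{invarianttree}: arc stabilizers are trivial. Fix the data of the construction once and for all, that is, the partially ordered finite collection of invariant $\R$-trees, and, in each such tree, one chosen representative per orbit with non-trivial stabilizer. Then each $H_{i}$ is by construction realized as the stabilizer of a specific chosen point $y_{i}^{(k_{i})}$ in a specific minimal tree, and this realization determines a canonical descending chain
\[
\FN = G_{i}^{(0)}\supset G_{i}^{(1)}\supset\cdots\supset G_{i}^{(k_{i})}\supset G_{i}^{(k_{i}+1)} = H_{i},
\]
together with distinguished points $y_{i}^{(\ell)}\in T_{i}^{(\ell)}$ satisfying $G_{i}^{(\ell+1)}=\mathrm{Stab}_{G_{i}^{(\ell)}}(y_{i}^{(\ell)})$, where $T_{i}^{(\ell)}$ is the tree on which $G_{i}^{(\ell)}$ acts. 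In particular every $w\in H_{i}$ fixes each $y_{i}^{(\ell)}$.

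Now suppose for contradiction that $1\neq w\in gH_{i}g^{-1}\cap H_{j}$ with either $i\neq j$ or ($i=j$ and $g\notin H_{i}$). By induction on $\ell\geq 0$ I claim that $T_{i}^{(\ell)}=T_{j}^{(\ell)}=:T^{(\ell)}$, $G_{i}^{(\ell)}=G_{j}^{(\ell)}=:G^{(\ell)}$, and $g\in G^{(\ell)}$. The case $\ell=0$ is trivial. For the inductive step: $w\in H_{j}$ fixes $y_{j}^{(\ell)}$, while $w\in gH_{i}g^{-1}\subset g\,\mathrm{Stab}(y_{i}^{(\ell)})\,g^{-1}=\mathrm{Stab}(gy_{i}^{(\ell)})$ (the inductive hypothesis $g\in G^{(\ell)}$ justifying that we are working inside the common tree $T^{(\ell)}$). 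Since $w\neq 1$, property~(b) of Proposition~\ref{invarianttree} prevents $w$ from fixing a non-degenerate arc of $T^{(\ell)}$, so $gy_{i}^{(\ell)}=y_{j}^{(\ell)}$. Because only one representative was chosen per $G^{(\ell)}$-orbit in $T^{(\ell)}$, this forces $y_{i}^{(\ell)}=y_{j}^{(\ell)}$, whence $g\in G^{(\ell+1)}$; and $T_{i}^{(\ell+1)}=T_{j}^{(\ell+1)}$ follows automatically from the canonical construction of the next-level tree out of $G^{(\ell+1)}$. The induction either yields an immediate contradiction at the first level where the chains of $H_{i}$ and $H_{j}$ were supposed to diverge (two distinct chosen representatives forced to coincide), or else it runs through the entire common chain and forces $H_{i}=H_{j}$ (hence $i=j$) together with $g\in H_{i}$; both alternatives contradict the hypothesis, proving the malnormality of $\mathcal{H}(\alpha)$.

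The main substantive ingredient is the triviality of arc stabilizers (Proposition~\ref{invarianttree}(b)); everything else is combinatorial bookkeeping of the iterative construction. The one point requiring attention is that the modified automorphism used at each level (a suitable power of $\alpha$ composed with an inner automorphism, restricted to a point stabilizer) still yields a tree covered by Proposition~\ref{invarianttree}, so that property~(b) is genuinely available throughout the descent; this is built into the iterative procedure that defines $\mathcal{H}(\alpha)$.
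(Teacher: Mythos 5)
Your quasi-convexity argument is identical to the paper's. For malnormality, your idea is also essentially the paper's: the key input is the triviality of arc stabilizers (Proposition~\ref{invarianttree}(b)), exploited level by level through the iterative $\R$-tree construction. The difference is one of presentation: the paper factors the argument into two clean abstract lemmas --- (i) malnormality is ``transitive'' in the sense that if $(H'_1,\dots,H'_s)$ is malnormal in $G$ and each $H'_j$ contains a family malnormal in $H'_j$, the total family is malnormal in $G$; (ii) stabilizers of finitely many points in pairwise distinct orbits of an $\R$-tree action with trivial arc stabilizers form a malnormal family. You instead inline both observations into one induction along explicit descending chains $\FN = G_i^{(0)}\supset\cdots\supset H_i$. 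The content is the same; the paper's factorization is a bit cleaner and easier to reuse, while your version is more concrete.

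One place where your final paragraph is not quite airtight: the dichotomy ``either the chains diverge at some level (contradiction) or they agree for the whole common length and $H_i=H_j$'' silently assumes the two chains have equal length. To close this, note that when the induction reaches the level $k_i$ at which $H_i$ is chosen, the tree $T^{(k_i)}$ is by construction a minimal tree of the collection, hence its nontrivial point stabilizers are entirely of polynomial growth and the iteration stops there. Since the induction forces $T_j^{(k_i)}=T^{(k_i)}$ and $y_j^{(k_i)}=y_i^{(k_i)}$, the chain for $H_j$ cannot continue past this level (there is no invariant tree for the polynomially growing stabilizer), so $k_j=k_i$ and $H_j=H_i$. With that one line added, your argument is complete. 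You also correctly flag the other subtlety --- that Proposition~\ref{invarianttree} applies to the restrictions used at each level --- and indeed this is built into how $\mathcal H(\alpha)$ is defined in \S~\ref{polynomialgrowth}.
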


\begin{proof}
The quasi-convexity is a direct consequence of the fact that the
subgroups in ${\mathcal F}(\alpha)$ are finitely generated: Indeed,
every finitely generated subgroup of a free group is quasi-convex,
as is well known and easy to prove.

To prove malnormality of the family $\mathcal H(\alpha)$ we first
observe directly from Definition \ref{malnormalfamily} that if
$\mathcal{H'} = (H'_{1}, \ldots, H'_{s})$ is a malnormal family of
subgroups of some group $G$, and for each $j \in \{1, \ldots, s\}$ one
has within $H'_{j}$
a family of subgroups $\mathcal{H}''_{j} = (H''_{j, 1},
\ldots, H''_{j, r(j)})$ which is malnormal with respect to
$H'_{j}$, then the total family
$$\mathcal{H} = (H''_{j,k})_{(j, k) \in \{1, \ldots, s\} \times
\{1, \ldots, r(j)\} }$$
is a family of subgroups that is malnormal in $G$.

A second observation, also elementary, shows that given any $\R$-tree
$T$ with isometric $G$-action that has
trivial arc stabilizers, every finite system of points $x_{1},
\ldots, x_{r} \in T$ which lie in pairwise distinct $G$-orbits gives
rise to a family of subgroups $(Stab(x_{1}), \ldots, Stab(x_{r}))$
which is malnormal in $G$.

These two observations, together with Proposition
\ref{invarianttree}, give directly the claimed malnormality of the
characteristic family of polynomial $\alpha$-growth.
\end{proof}

\section{$\beta$-train tracks}

A new kind of train track maps $f: \GG^2 \to \GG^2$, called {\it
partial train track maps with Nielsen faces}, has been introduced.
by the second author (see Appendix). Here $\GG^2$ consists of

\begin{enumerate}
\item[(a)]
a disjoint union $X$ (called {\em the relative part}) of finitely
many {\it vertex spaces} $X_v$,

\item[(b)]
a finite collection $\hat \Gamma$
(called {\em the
train track part})
of edges
$e_j$ with endpoints in the $X_v$, and

\item[(c)]
a finite collection of 2-cells $\Delta_k$
with boundary in $\GG^1 := X \cup \hat \Gamma$.
\end{enumerate}

The map $f$ maps $X$ to $X$ and $\GG^1$ to $\GG^1$.  A path
$\gamma_{0}$ in $\GG^1$ is called a {\em relative backtracking path}
if $\gamma_{0}$ is in $\GG^{2}$ homotopic rel. endpoints to a path
entirely contained in $X$. A path $\gamma$ in $\GG^{1}$ is said to
be {\em relatively reduced} if any relative backtracking subpath of
$\gamma$ is contained in $X$.

\begin{convention}
\label{conventiononpaths}
(1)
Note that throughout this paper
we will only consider paths $\gamma$ that are immersed except possibly at the
vertices of $\GG^{2}$.
(Recall that by hypothesis (b) above all
vertices of $\GG^{2}$ belong to $X$.)
In other words,
$\gamma$ is either a classical edge
path, or else an edge path with first
and/or
 last edge that is only
partially traversed.
In the latter case, however, we require that this partially traversed edge belongs
to $\hat \Gamma$.

\smallskip
\noindent
(2)
Furthermore, for subpaths $\chi$ of $\gamma$ that are entirely
contained in $X$, we are only interested in the homotopy class in
$X$ relative endpoints.
In the context considered in this paper, $X$ will always be a graph,
so that we can (and will tacitly) assume throughout the remainder of
the paper that such $\chi$ is a reduced path in the graph $X$.

\smallskip
\noindent
(3)
We denote by $\overline \gamma$ the path
$\gamma$ with inverted orientation.
\end{convention}

In particular, it follows
from convention (2)
that every relatively reduced path $\gamma$
as above is reduced in the
classical sense, when viewed as path in
the graph $\GG^{1}$.  The converse is wrong, because of the 2-cells
$\Delta_{k}$ in $\GG^{2}$.
(Compare also part (b) of Definition-Remark \ref{legalturns}, and the
subsequent discussion.)

\smallskip

A path $\gamma$ in $\GG^1$ is called {\em legal} if for all $t \geq
1$ the path $f^t(\gamma)$ is relatively reduced. The space $\GG^2$
and the map $f$ satisfy furthermore the following properties:

\begin{itemize}
\item
The map $f$ has the {\it partial train track property relative to
$X$}: every edge $e$ of the train track part $\hat \Gamma$ is legal.

\item
Every edge $e$ from the train track part $\hat
\Gamma$ is {\em expanding}: there is a positive iterate of
$f$ which maps $e$ to an edge path that runs over at least two edges
from the train track part.

\item
For every
path (or loop) $\gamma$ in $\GG^1$ there is an integer $t = t(\gamma)
\geq 0$ such that $f^t(\gamma)$ is
homopopic rel. endpoints
(or freely homotopic) in $\GG^2$ to a
legal path (or loop) in $\GG^1$.
\end{itemize}

We say that $f: \GG^2 \to \GG^2$ {\it represents} an automorphism
$\alpha$ of $F_n$ if there is a {\it marking} isomorphism $\theta:
\pi_1 \GG^2 \to F_n$ which conjugates the induced morphism $f_*:
\pi_1 \GG^2 \to \pi_1 \GG^2$ to the outer automorphism $\hat \alpha$
given by $\alpha$.

\medskip

Building on deep work of Bestvina-Handel \cite{BH}, the second
author has shown \cite{Lu1} that every automorphism $\alpha$ of
$F_{n}$ has a partial train track representative with Nielsen faces
$f: \GG^2 \to \GG^2$, and all conjugacy classes represented by loops
in the relative part have polynomial $\alpha$-growth.
However, for the purpose of this paper an additional property is
needed, which in \cite{Lu1}, \cite{Lu2} only occurs for the ``top
stratum'' of $\hat \Gamma$, namely that legal paths lift to
quasi-geodesics in the universal covering $\tilde \GG^{2}$.

This is the reason why one needs to work here with {\em
$\beta$-train track maps}, and with {\em strongly legal} paths,
which have this additional property. This improvement, and some
other technical properties of $\beta$-train tracks needed later are
presented in detail in the next section.

The following result is presented in the Appendix. Note that all
properties of $\beta$-train tracks maps which are used below are
explicitly listed here.

\begin{theorem}
\label{betterttrepresentative} Every automorphism $\alpha$ of $F_n$
is represented by a $\beta$-train track map. This is a partial train
track map with Nielsen faces $f: \GG^2 \to \GG^2$ relative to a
subspace $X \subset \GG^{2}$, which satisfies:

\smallskip
\noindent (a)  Every connected component $X_v$ of $X$ is a graph,
and the marking isomorphism $\theta: \pi_1 \GG^2 \to F_n$ induces a
monomorphism $\pi_1 X_v \to F_n$. Every conjugacy class represented
by a loop in $X$ has polynomial growth.

\smallskip
\noindent (b)  There is a subgraph $\Gamma \subset \GG^1$, which
contains all of the train track part $\hat \Gamma$, and there is a
homotopy equivalence $r: \GG^2 \to \Gamma$ which restricts to the
identity on $\Gamma$, such that the composition-restriction
$f_\Gamma = r \circ f \mid_\Gamma: \Gamma \to \Gamma$ is a classical
relative train track map as defined in \cite{BH}.

\smallskip
\noindent (c)  Every edge $e$ of the train track part of $\GG^{2}$
is strongly legal (see Definition \ref{stronglylegal}) and thus in
particular legal.

\smallskip
\noindent
(d)  Every strongly legal path in $\GG^{1}$ is mapped by $f$ to a
strongly legal path.

\smallskip
\noindent (e) Every edge $e$ from the train track part $\hat \Gamma$
is expanding: there is a positive iterate of $f$ which maps $e$ to
an edge path that runs over at least two edges from $\hat \Gamma$.

\smallskip
\noindent
(f)  The lift of any strongly legal path $\gamma$ to the universal covering
$\tilde \GG^{2}$ is a quasi-geodesic with respect to the simplicial
metric on $\tilde \GG^{2}$ (where every edge in either, the train
track and the relative part, is given length 1), for some fixed
quasi-geodesy constants independent of the choice of $\gamma$.

\smallskip
\noindent (g) Every reduced path in $\Gamma$ lifts also to a
quasi-geodesic in $\tilde \GG^{2}$. Every path that is mapped by the
retraction $r$ to a reduced path in $\Gamma$ lifts also to a
quasi-geodesic in $\tilde \GG^{2}$. In particular, every path which
derives from a strongly legal path by applying $r$ to any collection
of subpaths does lift to a quasi-geodesic in $\tilde \GG^{2}$.

\smallskip
\noindent (h)  For every path $\gamma$ in $\GG^1$ there is an
integer $\hat t = \hat t(\gamma) \geq 0$ such that $f^{\hat
t}(\gamma)$ is homotopic rel. endpoints in $\GG^2$ to a strongly
legal path in $\GG^1$. The integer $\hat t(\gamma)$ depends only on
the number of illegal turns (compare Definition \ref{stronglylegal})
in $\gamma$ and not on $\gamma$ itself.
\end{theorem}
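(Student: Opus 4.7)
The plan is to construct $f$ and $\GG^2$ by refining a Bestvina-Handel relative train track representative $f_\Gamma : \Gamma \to \Gamma$ for $\alpha$ from \cite{BH}. Recall that $\Gamma$ comes equipped with a filtration $\Gamma_0 \subset \Gamma_1 \subset \cdots \subset \Gamma_N = \Gamma$ whose strata are either exponentially growing (EG), each carrying a finite list of indivisible Nielsen paths (INPs), or non-exponentially growing (NEG). I would take the relative part $X$ to be assembled from the NEG strata (so that all loops in $X$ represent polynomially growing conjugacy classes), the train track part $\hat\Gamma$ to be the union of the EG edges, and attach one 2-cell $\Delta_k$ per INP $\rho_k$ with boundary $\rho_k$. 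Because $f_\Gamma$ preserves each INP up to homotopy, $f_\Gamma$ extends to $f : \GG^2 \to \GG^2$, and the obvious crushing $r : \GG^2 \to \Gamma$ of the $\Delta_k$ back to their boundaries satisfies $f_\Gamma = r \circ f\mid_\Gamma$. This sets up (a) and (b).

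Next I would take strong legality (Definition \ref{stronglylegal}) to mean ``legal in the classical sense, and containing no subpath that coincides with (a power of) an INP of any EG stratum.'' Property (c) is then immediate for single edges of $\hat\Gamma$, since an INP has length at least two. Property (d) is the key invariance statement: any illegal turn in $f(\gamma)$ would either descend from an illegal turn of $\gamma$ (excluded by legality) or arise from a full INP of $\gamma$ whose image folds across a Nielsen face (excluded by strong legality, since the only paths that can produce such cancellations are INPs themselves). Property (e) is the expansion built into EG strata by Perron-Frobenius. Property (h) follows from the standard argument that each application of $f$ strictly reduces the number of illegal turns of $\gamma$ until $f^{\hat t}(\gamma)$ becomes strongly legal, with the number of steps uniformly bounded in terms of the initial illegal-turn count alone, using the bounded cancellation constant for $f_\Gamma$.

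The hard part will be properties (f) and (g), quasi-geodesy of strongly legal lifts in $\tilde\GG^2$. I would induct on the height of the stratum containing the top edges of $\gamma$. Inside a single EG stratum, legal paths lift to quasi-geodesics of $\tilde\Gamma$ by Bestvina-Handel's bounded cancellation lemma. Inductively, a strongly legal $\gamma$ decomposes as an alternation of excursions into $X$ and crossings of $\hat\Gamma$; the $X$-excursions are reduced by Convention \ref{conventiononpaths}(2) and, since $\pi_1 X_v \hookrightarrow \F{n}$ by (a), lift to geodesics of $\tilde X_v$ that sit quasi-isometrically inside $\tilde\GG^2$. The crossings of $\hat\Gamma$ cannot be cancelled by adjacent $X$-excursions or by later crossings, because any such cancellation would realise a relative backtracking in $\GG^1$, and the only relative backtrackings come from INP/Nielsen face configurations that strong legality forbids. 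Combining these two ingredients via the bounded cancellation constant (which controls the overlap between consecutive segments) produces uniform quasi-geodesy constants, and (g) follows by reinserting into a reduced $\Gamma$-path the INPs that $r$ has collapsed, so that the preceding argument applies after an innocuous adjustment of constants.

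The principal technical obstacle is to calibrate the definition of strong legality so as to simultaneously achieve (i) invariance under $f$ (property (d)), (ii) exclusion of every relative backtracking in $\tilde\GG^2$ (property (f)), and (iii) attainability by some iterate $f^{\hat t}(\gamma)$ of an arbitrary path (property (h)). It is exactly the tension between (i)--(iii) that forces the $\beta$-modification, namely the replacement of $\Gamma$ by a 2-complex $\GG^2$ together with the retraction $r$; and it is the verification that these three demands can be balanced against each other that will occupy the bulk of the Appendix.
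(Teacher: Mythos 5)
Your blueprint of deriving $\GG^2$ from a Bestvina--Handel relative train track by coning off INPs with Nielsen faces, and of calibrating strong legality so that (c)--(h) balance, is in the right spirit. But the one-shot construction you propose breaks at the very first step, and the missing piece is exactly what makes the paper's Appendix an iterative, stratum-by-stratum argument.

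The identification of the relative part $X$ with ``the NEG strata'' is wrong. In a relative train track, an NEG stratum can have edges $e$ with $f(e) = e\cdot u$ where $u$ runs through lower, exponentially growing strata; such an NEG stratum is not polynomially growing, and more fatally the union of NEG edges is not $f$-invariant, so $f(X)\subset X$ --- required by the very definition of a partial train track map relative to $X$ (Definition~\ref{partial-tr-maps}) --- fails. The paper's Proposition~\ref{relative-components} shows that the correct relative part is characterized dynamically (components carrying the subgroups of polynomial $\alpha$-growth), not combinatorially by stratum type. In the paper's iterative construction this distinction is precisely what Cases~3 and~4 of Proposition~\ref{strong-tt-representative} encode: an NEG stratum is shunted into the relative part only when its $f$-image already lies in $\hat\Gamma\cup X_{k-1}$; otherwise its edges become train-track edges.

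The second gap is the absence of the \emph{attaching-iteration method} (Corollary~\ref{attaching-iteration-for-tts} and Lemma~\ref{one-edge}). After expanding Nielsen faces at level $k-1$, the attaching points of the level-$k$ edges and the bookkeeping of their images must be re-glued to high $f$-iterates of themselves in order to make those edges \emph{strongly} legal relative to the modified lower complex. Without this, your claim that property (c) is ``immediate since an INP has length at least two'' does not hold: strong legality of an edge $e$ is a condition on all $f^t(e)$, $t\geq 1$ (that $r f^t(e)$ be reduced in $\Gamma$), not merely that $e$ itself is not an INP, and it genuinely fails for the naive attachment. The same re-gluing is what makes (d) true. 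Related to this, your Nielsen faces are attached only at INPs of the original $f_\Gamma$, but INPs of the partial train track maps $f_k$ relative to $X_{k-1}$ change as $X_{k-1}$ absorbs auxiliary edges and new NEG strata; the paper expands Nielsen faces afresh at every level precisely because the relevant INPs are the ones of the current $f_k$, not of the original $f_\Gamma$.

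Your outline for (f)--(h) is closer to the paper's (quasi-geodesy via the retraction $r$ and bounded cancellation; uniform bound on $\hat t(\gamma)$ via a bound on illegal turns, as in Proposition~\ref{eventuallypseudolegal}), but these conclusions all presuppose the corrected identification of $X$ and the attaching-iteration step, so they cannot be salvaged without first repairing the construction.
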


\medskip

For further use of $\beta$-train track maps, in particular with
respect to a structural analysis of automorphisms of $\FN$, we refer
the reader to \S \ref{structure-of-autos} of the Appendix.

\section{Strongly legal paths and INP's in \\ $\beta$-train tracks}

Let $f: \GG^{2} \to \GG^{2}$ be a $\beta$-train track map as
described in the previous section. Recall from the beginning of the
last section that a path $\gamma$ in $\GG^{1}$ is {\em legal} if,
for any $t \geq 1$, the image path $f^t(\gamma)$ is relatively
reduced, i.e. every  relative backtracking subpath of $f^t(\gamma)$
is completely contained in the relative part $X \subset \GG^{1}$.
For the precise definition of a ``path'' recall Convention
\ref{conventiononpaths}.

\begin{definition}
\label{INPpaths} An INP is a reduced path $\eta = \eta' \circ
\eta''$  in $\GG^{1}$ which has the following properties:
\begin{enumerate}
\item[(0)]
The first and the last edge (or non-trivial
edge segment)
of the path $\eta$ belongs to the train track part $\hat \Gamma \subset \GG^{1}$.
\item[(1)]
The
subpaths
$\eta'$ and $\eta''$
(called the {\em branches} of $\eta$)
are
legal.
\item[(2)] The path $f^t(\eta)$ is not
legal, for any $t \geq 0$.
\item[(3)] For some
integer $t_{0} \geq 1$ the path $f^{t_{0}}(\eta)$ is homotopic
relative to its endpoints, in $\GG^{1}$, to the path $\eta$.
\end{enumerate}
\end{definition}

We would like to alert the reader that in the literature one
requires sometimes in property (3) above that $t_0 = 1$, and that
for $t_0 \geq 2$ one speaks of a {\em periodic INP}. We will not
make this notational distinction in this paper.

\smallskip

For every INP $\eta$ there is an associated {\em auxiliary edge} $e$
in the relative part $X \subset \GG^{1}$ which has the same
endpoints as $\eta$. The relative part $X \subset \GG^{1}$ consists
precisely of all auxiliary edges and of all edges $e'$ of $\Gamma
\smallsetminus \hat \Gamma$. In other words: $\GG^{1}$ is the union
of $\Gamma$ with the set of all auxiliary edges.

\smallskip

The canonical retraction $r: \GG^{2} \to \Gamma$ from Theorem
\ref{betterttrepresentative} (b) is given on $\GG^{1}$ as power
$\hat r^n$ of the map $\hat r: \GG^{1} \to \GG^{1}$ which is the
identity on $\Gamma$ and maps every auxiliary edge $e$ to the
associated INP-path $\hat r(e) = \eta$. Recall in this context that
there are only finitely many INP's and thus only finitely many
auxiliary edges, for any given $\beta$-train track map.

\begin{aside}
\label{doubleauxiliary}
Technically speaking, an auxiliary edge $e$ is in truth the union of two
{\em auxilary half-edges}, which meet at an {\em auxiliary vertex}
which is placed in the center of $e$ and
belongs to the relative part. The reason for this particularity
lies in the fact that otherwise 3 (or more) auxiliary edges could form
a non-trivial loop $\gamma$ in $X$ which is contractible in $\GG^{2}$.

To avoid this phenomenon (compare the ``expansion of a Nielsen
face'' in Definition 3.7 of \cite{Lu1}), in this case there is only
one auxiliary vertex which is the common center of the three
auxiliary edges, and only three auxiliary half-edges, arranged in
the shape of a tripod with the auxiliary vertex as center: the union
of any two of the auxiliary half edges defines one of the three
auxiliary edge we started out with. As a consequence, the above loop
$\gamma$ is in fact a contractible loop in the tripod just
described. For more detail and the relation with attractive fixed
points at $\partial \FN$ see \cite{Lu1}, Definition 3.7.
\end{aside}

\begin{definition/remark}
\label{legalturns} (a) A {\em turn} is a path in $\GG^{1}$ of the
type $e \circ \chi \circ e'$, where $e$ and $e'$ are edges (or
non-trivial edge segments) from the train track part $\hat \Gamma$
of $\GG^{1}$, while $\chi$ is an edge path (possibly trivial !)
entirely contained in the relative part $X \subset \GG^{1}$. We
recall (Convention \ref{conventiononpaths}) that one is only
interested in $\chi$ up to homotopy rel. endpoints, within the
subspace $X$, and thus one always assumes that $\chi$ has been
isotoped to be a reduced path in the graph $X$.

\smallskip
\noindent (b) A path $\gamma$ is not legal if and only if for some
$t \geq 1$ the path $f^t(\gamma)$ contains a turn $e \circ \chi
\circ e'$ as above which (i) either is not relatively reduced, i.e.
$\chi$ is a contractible loop and $\bar e = e'$, or else (ii) the
path $\chi$ is (after reduction) an auxiliary edge $e_{0}$ with
associated INP $\hat r(e_{0}) = \eta$, such that $\eta$ starts in
$\bar e$ and ends in $\bar e'$.

\smallskip
\noindent (c) A path $\gamma$ in $\GG^{1}$ is legal if and only if
all of its turns are legal. In particular, every legal path is
relatively reduced (and thus reduced in the graph $\GG^{1}$, see
Convention \ref{conventiononpaths}). The converse implication is
false.

\smallskip
\noindent
(d)
Every INP $\eta = \eta' \circ \eta''$ as in Definition \ref{INPpaths}
has precisely one turn that is not legal, called the {\em tip}
of $\eta$. This is the turn from the last train track edge of $\eta'$
to the first train track edge of $\eta''$.
More specifically, for all $t\geq 1$ the path $f^t(\eta)$ contains
precisely one turn
(= the turn from the last train track edge of $f^t(\eta')$
to the first train track edge of $f^t(\eta'')$)
that is not relatively reduced, as above in alternative (i) of part (b).

\end{definition/remark}

Although not needed in the sequel, we would like to explain the case
(ii) of part (b) above:

For some sufficiently large exponent $t' \geq 1$ there will be a
terminal segment $e_{1}$ of $e$ and an initial segment $e'_{1}$ of
$e'$ with $f^{t'}(e_{1} \circ e_{0} \circ e'_{1}) = \bar \eta' \circ
e_{0} \circ \bar \eta''$. Thus the subpath $f^{t'}(e_{1} \circ e_{0}
\circ e'_{1})$ of $f^{t+t'}(\gamma)$, while not contained in $X$, is
relatively backtracking, since $\bar \eta' \circ e_{0} \circ \bar
\eta''$ is contractible in $\GG^{2}$. This is because $\GG^{2}$
contains for every auxiliary edge $e_{0}$ a 2-cell $\Delta_{e_{0}}$
(called a {\em Nielsen face}) with boundary path $\bar e_{0} \circ
\eta$.  By definition it follows that $\gamma$ is not legal.

\begin{definition/remark}
\label{stronglylegal} (a) A {\em half turn} is a path in $\GG^{1}$
of the type $e \circ \chi$ or $\chi \circ e'$, where $e$ and $e'$
are edges (or non-trivial edge segments) from the train track part
$\hat \Gamma$ of $\GG^{1}$, while $\chi$ is a non-trivial reduced
edge path entirely contained in the relative part $X \subset
\GG^{1}$.

Every finite path $\gamma$
contains only finitely many maximal
(as subpaths of $\gamma$)
half turns,
namely precisely two at each turn, plus a further half turn at the
beginning and another one at the end of $\gamma$.

\smallskip
\noindent (b) A path $\gamma$ in $\GG^{1}$ is called {\em strongly
legal} if it is legal (and thus reduced in $\GG^{1}$), and if in
addition it has the following property: The path $\gamma'$, obtained
from $\gamma$ through replacing every auxiliary edge $e_{i}$ on
$\gamma$ by the associated INP $\eta_{i} = \hat r(e_{i})$, contains
as only illegal turns the tips of the INPs $\eta_{i}$.

\smallskip
\noindent (c) A legal (and hence reduced) path $\gamma$ in $\GG^{1}$
is strongly legal if and only if all maximal half turns in $\gamma$
are strongly legal. A half turn $e \circ \chi$ (or similarly $\chi
\circ e'$) in $\gamma$ is not strongly legal if and only if the
first edge of $\chi$ is an auxiliary edge $e'$ with $\hat r(e') =
\eta$, and for some $t \geq 1$ the first edge of $f^t(\eta)$ is
precisely the first edge of the legal path $f^t(\bar e)$.

\smallskip
\noindent
(d)
A turn is called {\em illegal} if it is not legal, or if any of its
two
maximal
sub-half-turns is not strongly legal.
\end{definition/remark}

\medskip

We will now treat explicitely a technical subtlety which is
relevant for the next section:

If $\eta$ is an INP in $\GG^{1}$, decomposed as above into two legal
(actually they trun out to be always strongly legal !)
{branches} $\eta = \eta' \circ \eta''$, then it can happen that
$\eta'$ (or $\eta''$) contains an auxiliary edge $e_{1}$. Replacing now
$e_{1}$ by its associated INP $\hat r(e_{1}) = \eta_{1}$, the same
phenomenon can occur again: the legal branches of $\eta_{1}$ may well
run over an auxiliary edge. However, this process can repeat only a
finite number of times.

This is the reason why above we distinguish between an INP $\eta =
\hat r(e)$ with associated auxiliary edge $e$ on one hand, and the
path $r(e)$ in $\Gamma \subset \GG^{1}$ obtained through finitely
iteration of $\hat r$ on the other hand.
For any auxiliary edge $e$ we call the reduced
path $r(e)$ in $\Gamma$ a {\em pre-INP}, and we observe that such a
pre-INP may well contain another such pre-INP as subpath (although not
as boundary subpath,
by property (0) of Definition \ref{INPpaths}).

\begin{definition}
\label{preINP}
A pre-INP $r(e)$
in a reduced path $\gamma$ in $\Gamma$
is called {\em isolated}, if any other pre-INP in $\gamma$ that
intersects $r(e)$
in more than a point
is contained as subpath in $r(e)$.
\end{definition}

Clearly,
replacing each such isolated pre-INP $r(e_{i})$ of $\gamma$ by the
associated auxiliary edge $e_{i}$ yields a
path $\gamma'$ in $\GG^{1}$ which does not depend on the order in
which these replacements are performed, and is thus uniquely
determined by $\gamma$.  It also satisfies $r(\gamma') = \gamma$,
which is a reduced path, by hypothesis. Such
a path $\gamma'$ is called a {\em normalized} path in $\GG^{1}$; they
will be investigated more thoroughly in the next section.

\section{Normalized paths in $\beta$-train tracks}

Throughout this section we assume that a $\beta$-train track map $f:
\GG^{2} \to \GG^{2}$ is given as defined in the previous two
sections, and that $f$ represents an automorphism $\alpha$ of $\FN$.
We will use in this section both, the {\em absolute} and the {\em
relative} length of a path $\gamma$ in $\GG^{1}$: The absolute
length $| \gamma |_{abs}$ is given by associating to every edge $e$
of $\GG^{1}$, i.e. of $\hat \Gamma$ and of $X$, the length $L(e) =
1$. The relative length $| \gamma |_{rel}$ is given by associating
to every edge $e$ in the train track part $\hat \Gamma \subset
\GG^{1}$ the length $L(e) = 1$, while every edge $e'$ in the
relative part $X \subset \GG^{1}$ is given length $L(e') = 0$.

\smallskip

We will now start with our study of normalized paths. The reader
should keep in mind that lifts of normalized paths to the universal
cover $\tilde \GG^{2}$ of $\GG^{2}$ are meant (and shown below)
to be strong analogues of geodesic segments in a tree. For example,
one can see directly from the definition that a normalized path is
reduced in $\GG^{1}$ and relatively reduced in $\GG^{2}$, and that a
concatenation of normalized paths, even if not normalized, is
necessarily relatively reduced in $\GG^{2}$ if it is reduced in $\GG^{1}$.

\begin{definition}
\label{normalized} \rm A path $\gamma$ in $\GG^{1}$ is {\em
normalized}, if

\smallskip
\noindent
(i) the path $r(\gamma)$ in $\Gamma$ is reduced, and

\smallskip
\noindent (ii) the path $\gamma$ is obtained from $r(\gamma)$
through replacing every isolated pre-INP $r(e)$ of $r(\gamma)$ by
the associated auxiliary edge $e$.
\end{definition}

\begin{proposition}
\label{uniquenormalized} For every path $\gamma$ in $\GG^{1}$ there
is a unique normalized path $\gamma_{*}$ in $\GG^{1}$ which is (in
$\GG^{2}$) homotopic to $\gamma$ relative to its endpoints.
\end{proposition}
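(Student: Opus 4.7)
The plan is to split the argument into existence and uniqueness, using the retraction $r: \GG^{2} \to \Gamma$ from Theorem \ref{betterttrepresentative}(b) as the key tool that links normalized paths in $\GG^{1}$ with reduced paths in the graph $\Gamma$.

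\textbf{Existence.} Starting from an arbitrary path $\gamma$ in $\GG^{1}$, I would first push it into $\Gamma$ by applying $r$. The path $r(\gamma)$ lives in $\Gamma$ but is in general not reduced; reducing it inside the graph $\Gamma$ yields a well-defined reduced path $\bar\gamma$ homotopic to $r(\gamma)$ rel endpoints in $\Gamma$, and hence (since $r$ is homotopic in $\GG^{2}$ to the identity) homotopic to $\gamma$ rel endpoints in $\GG^{2}$. Next I would scan $\bar\gamma$ for its isolated pre-INPs in the sense of Definition \ref{preINP}; by definition, distinct isolated pre-INPs in $\bar\gamma$ are either disjoint or one is contained in the other, so as maximal subpaths they are pairwise disjoint and the replacement procedure is order-independent, as noted right before Definition \ref{normalized}. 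Replacing each isolated pre-INP $r(e_{i})$ by the associated auxiliary edge $e_{i}$ produces a path $\gamma_{*}$ in $\GG^{1}$. Each such replacement is a homotopy in $\GG^{2}$, because the Nielsen face $\Delta_{e_{i}}$ (together with the iterated application of $\hat r$ used in Aside \ref{doubleauxiliary} and the surrounding discussion) exhibits $e_{i}$ and $r(e_{i})$ as homotopic rel endpoints. Since by construction $r(\gamma_{*}) = \bar\gamma$ is reduced, condition (i) of Definition \ref{normalized} is met, and condition (ii) is immediate from how $\gamma_{*}$ was built, so $\gamma_{*}$ is normalized and homotopic to $\gamma$ in $\GG^{2}$.

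\textbf{Uniqueness.} Suppose $\gamma_{*}$ and $\gamma_{**}$ are two normalized paths in $\GG^{1}$ that are homotopic rel endpoints in $\GG^{2}$. Applying $r$, which is homotopic to the identity in $\GG^{2}$, gives two paths $r(\gamma_{*}), r(\gamma_{**})$ in $\Gamma$ that are homotopic rel endpoints in $\GG^{2}$. Because $r$ is a homotopy equivalence of $\GG^{2}$ onto $\Gamma$ with $r|_{\Gamma} = \mathrm{id}_{\Gamma}$, the inclusion $\Gamma \hookrightarrow \GG^{2}$ induces a $\pi_1$-isomorphism, so the loop $r(\gamma_{*}) \cdot \overline{r(\gamma_{**})}$ is null-homotopic in $\Gamma$, i.e.\ the two paths are homotopic rel endpoints inside $\Gamma$. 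By normalization condition (i) both are reduced paths in the graph $\Gamma$, and reduced paths in a graph are unique in their rel-endpoint homotopy class; hence $r(\gamma_{*}) = r(\gamma_{**})$. Condition (ii) of Definition \ref{normalized} then forces $\gamma_{*} = \gamma_{**}$, since the replacement of each isolated pre-INP by its associated auxiliary edge is deterministic.

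\textbf{Main obstacle.} The delicate point is not the combinatorics of the replacement but the interplay between homotopy in $\GG^{2}$ and reducedness in $\Gamma$: one must be sure that when $r(\gamma)$ is reduced inside $\Gamma$, no homotopy information in $\GG^{2}$ is lost, and conversely that reducedness in $\Gamma$ pins the path down up to $\GG^{2}$-homotopy. The homotopy equivalence statement in Theorem \ref{betterttrepresentative}(b) is precisely what makes this work. A secondary subtlety is to verify that the set of isolated pre-INPs of $\bar\gamma$ really behaves well under the replacement (namely that isolated pre-INPs occur as pairwise disjoint subpaths and that their substitution does not introduce unwanted cancellations in $\GG^{1}$); this is ensured by property (0) of Definition \ref{INPpaths}, which prevents a pre-INP from being a boundary subpath of another one, and by the fact that auxiliary edges have their endpoints in the relative part $X$ so that no reduction in $\GG^{1}$ across the replacement sites is triggered.
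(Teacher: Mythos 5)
Your argument is correct and follows essentially the same path as the paper's proof: both establish existence by applying $r$, reducing in $\Gamma$, and replacing isolated pre-INPs by auxiliary edges, and both derive uniqueness from the uniqueness of the reduced representative in $\Gamma$ together with the determinism of the pre-INP replacement encoded in condition (ii) of Definition \ref{normalized}. Your slightly more explicit phrasing of the uniqueness step via the $\pi_1$-isomorphism $\Gamma \hookrightarrow \GG^2$, and your elaboration of why the replacement is order-independent, are accurate unpackings of what the paper leaves implicit.
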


\begin{proof}
To prove existence, it suffices to apply the retraction $r$ to
$\gamma$, followed by a subsequent reduction, to get a reduced path
in $\Gamma \subset \GG^{1}$ that is homotopic rel. endpoints in
$\GG^{2}$ to $\gamma$. One then replaces iteratively every isolated
pre-INP by the associated auxiliary edge to get $\gamma_{*}$.

Since reduced paths in $\Gamma$ are uniquely determined with repect
to homotopy rel. endpoints,
to prove uniqueness of $\gamma_{*}$
we only have to verify that for every normalized
path the above explained procedure reproduces the original path.  This
follows directly from the definition of an ``isolated''
pre-INP at the end of \S 5.
\end{proof}

For an arbitrary path $\gamma$ in the graph $\GG^{1}$ we always
denote by $\gamma_{*}$ the normalized path obtained from $\gamma$ as
given in Proposition \ref{uniquenormalized}.

\begin{proposition}
\label{legalandrelative}
Let $f: \GG^{2} \to \GG^{2}$ be a $\beta$-train track map.

\smallskip
\noindent
(a)  Every stronly legal path $\gamma$ in $\GG^{1}$ is normalised.

\smallskip
\noindent
(b)  If $\gamma$ is a path in $\GG^{1}$ that is entirely contained in the
relative part $X \subset \GG^{2}$, then the normalized path
$\gamma_{*}$ is also entirely contained in $X$.

\smallskip
\noindent
(c)
Normalized paths lift in the universal covering $\tilde
\GG^{2}$ to quasi-geodesics, with respect to the absolute metric on
$\tilde \GG^{2}$.
\end{proposition}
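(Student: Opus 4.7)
The plan is to handle the three parts separately, with (c) being an immediate consequence of the machinery already assembled and (a), (b) requiring a careful unpacking of the definitions of \emph{strongly legal} (Definition \ref{stronglylegal}) and \emph{normalized} (Definition \ref{normalized}).

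For part (a), I would fix a strongly legal path $\gamma$ and denote by $\gamma'$ the path obtained from $\gamma$ by replacing each auxiliary edge $e_{i}$ with its associated INP $\hat r(e_{i}) = \eta_{i}$. By Definition \ref{stronglylegal}(b), the only illegal turns in $\gamma'$ are the tips of the $\eta_{i}$. From this I would deduce the two conditions of Definition \ref{normalized}. First, the image $r(\gamma)$ is reduced in $\Gamma$: any potential backtracking either occurs inside a single pre-INP (impossible since pre-INPs are reduced by their definition at the end of \S 5) or bridges the junction between two consecutive substituted pre-INPs, or between a pre-INP and an adjacent edge of $\Gamma \smallsetminus \hat\Gamma$; but such a backtracking would produce in $\gamma'$ an illegal turn distinct from an INP tip, contradicting strong legality. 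Second, I would argue that the pre-INPs $r(e_{i})$ appearing in $r(\gamma)$ are isolated (Definition \ref{preINP}) and that no further pre-INP exists in $r(\gamma)$: the latter because otherwise $\gamma$ itself would contain the subpath $r(e)$ in $\Gamma$, whose tip is a non-legal turn, contradicting the legality of $\gamma$.

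For part (b), let $\gamma$ lie entirely in $X$, and decompose it as an alternation of edges of $\Gamma \smallsetminus \hat\Gamma$ and auxiliary edges. The retraction $r$ leaves the former untouched and replaces each auxiliary edge by its pre-INP, which by property (0) of Definition \ref{INPpaths} starts and ends in $\hat\Gamma$. Consequently no cancellation can occur at the interfaces with the neighboring $\Gamma \smallsetminus \hat\Gamma$-edges, so $r(\gamma)$ is already reduced in $\Gamma$ and each inserted pre-INP is manifestly isolated (its immediate neighbors on both sides lie outside any other pre-INP). Reversing the replacement (isolated pre-INP $\mapsto$ auxiliary edge) then recovers exactly $\gamma$, so $\gamma_{*} = \gamma \subset X$. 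For part (c), no further work is needed: by Definition \ref{normalized}(i), the image $r(\gamma_{*})$ is a reduced path in $\Gamma$, so Theorem \ref{betterttrepresentative}(g) applies directly and yields the desired quasi-geodesic lift in $\tilde \GG^{2}$, with constants independent of $\gamma$.

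The hard part will be the careful verification in (a) that the pre-INPs appearing in $r(\gamma)$ are exactly those coming from auxiliary edges of $\gamma$ and are isolated in the sense of Definition \ref{preINP}. This relies delicately on the fact that, by strong legality, the INPs hidden inside $\gamma'$ can be detected by tracking the only illegal turns present, namely their tips; all other portions of $\gamma'$ behave, as far as legality is concerned, as in an ordinary legal path in $\Gamma$. The remaining arguments for (b) and (c) are essentially formal consequences of Theorem \ref{betterttrepresentative} together with property (0) of Definition \ref{INPpaths}.
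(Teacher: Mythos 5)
Your treatment of part (c) matches the paper exactly: normalized paths have reduced $r$-image in $\Gamma$ by Definition \ref{normalized}(i), and Theorem \ref{betterttrepresentative}(g) immediately gives the quasi-geodesic property. Your argument for part (a) is also in the spirit of the paper (which simply asserts it ``follows directly from the definition''), and your sketch of why $r(\gamma)$ is reduced and why the substituted pre-INPs are isolated is the right direction, even though you yourself flag that the isolation claim needs to be pinned down more carefully.

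There is, however, a genuine gap in your proof of part (b). You describe $\gamma$ as ``an alternation of edges of $\Gamma\smallsetminus\hat\Gamma$ and auxiliary edges,'' and then argue that no cancellation can occur ``at the interfaces with the neighboring $\Gamma\smallsetminus\hat\Gamma$-edges.'' But a reduced path in $X$ may traverse two or more \emph{consecutive} auxiliary edges, and the interface between two consecutive pre-INPs $r(e_1)$ and $r(e_2)$ is precisely where cancellation can occur: both $r(e_1)$ and $r(e_2)$ start and end in $\hat\Gamma$ by Definition \ref{INPpaths}(0), so the terminal segment of $r(e_1)$ could very well cancel against the initial segment of $r(e_2)$, even though $e_1 \circ e_2$ is reduced in $X$. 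In the extreme case, several auxiliary edges forming a loop in $X$ could have an image under $r$ that reduces to the trivial path, in which case $\gamma_*$ would degenerate. The paper explicitly warns that part (b) requires the ``double auxiliary edge'' structure of Aside \ref{doubleauxiliary} — the replacement of each auxiliary edge by two auxiliary half-edges meeting at a central auxiliary vertex, arranged so that such problematic loops are already contractible in $X$ and hence disappear under Convention \ref{conventiononpaths}(2) before $r$ is even applied. Your argument does not invoke this structure, so the step ``$r(\gamma)$ is already reduced in $\Gamma$'' is unjustified. You should restrict attention to the tripod-like local picture near auxiliary vertices and show that, after reducing $\gamma$ in $X$, the remaining adjacencies between pre-INP images carry no further cancellation.
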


\begin{proof}
Statement (a) follows directly from the above definition of a
normalized path. For statement (b) the same is true, but we also
need the subtlety involved when introducing the auxiliary edges that
has been indicated in Aside \ref{doubleauxiliary}. Part (c) follows
directly from Theorem \ref{betterttrepresentative} (g).
\end{proof}

\begin{lemma}
\label{compositionofnormalized}
There exists a ``composition constant'' $E > 0$ which has the
following property:

\smallskip
\noindent (1) Let $\gamma_{1}$ and $\gamma_{2}$ be two normalized
paths in $\GG^{1}$, and let $\gamma = \gamma_{1} \circ \gamma_{2}$
be the (possibly non-reduced or non-normalized) concatenation.
Then there are decompositions
$\gamma_{1} = \gamma'_{1} \circ \gamma''_{1}$ and
$\gamma_{2} = \gamma''_{2} \circ \gamma'_{2}$
such that
the normalized path $\gamma_{*}$
can be written as concatenation
$$\gamma_{*} = \gamma'_{1} \circ \gamma_{1,2} \circ \gamma'_{2}\, ,$$
where the path
$\gamma_{1, 2}$ has absolute
length
$$ | \gamma_{1, 2} |_{abs} \, \, \leq E \, .$$

\smallskip
\noindent
(2)
If one assumes that the concatenation
$\gamma = \gamma_{1} \circ \gamma_{2}$ is reduced, then one can
furthermore conclude that also the paths
$\gamma''_{1}$ and $\gamma''_{2}$ have absolute
length $\leq E$.

\end{lemma}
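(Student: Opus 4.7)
A $\beta$-train track map $f$ has only finitely many INPs, so the absolute lengths of all pre-INPs in $\Gamma$ are uniformly bounded by some integer $E_0 \geq 2$. I take the composition constant to be a modest multiple, say $E := 4 E_0$. The overall strategy is to show that the passage from $\gamma_1 \circ \gamma_2$ to $\gamma_{*}$ alters the path only inside a bounded neighbourhood of the junction.

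\textbf{Plan for (1).} Following the recipe of Proposition \ref{uniquenormalized}, I form $r(\gamma_1) \circ r(\gamma_2)$ in $\Gamma$, reduce it to a reduced path $\rho$, and then replace each isolated pre-INP of $\rho$ by its associated auxiliary edge. Because $r(\gamma_1)$ and $r(\gamma_2)$ are individually reduced (since $\gamma_1, \gamma_2$ are normalized), the reduction is a common cancellation at the junction: a terminal segment $\sigma_1$ of $r(\gamma_1)$ cancels against an initial segment $\sigma_2$ of $r(\gamma_2)$, giving $\rho = \pi_1 \circ \pi_2$ with $\pi_1$ a prefix of $r(\gamma_1)$ and $\pi_2$ a suffix of $r(\gamma_2)$. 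I then define $\gamma''_1$ to be the shortest suffix of $\gamma_1$ whose $r$-image contains $\sigma_1$ together with a buffer of absolute length $E_0$ immediately preceding $\sigma_1$, and symmetrically $\gamma''_2$; set $\gamma_1 = \gamma'_1 \circ \gamma''_1$ and $\gamma_2 = \gamma''_2 \circ \gamma'_2$. The point of the $E_0$-buffer is that every pre-INP has absolute length $\leq E_0$, so any pre-INP of $\rho$ meeting $r(\gamma'_1)$ non-trivially must be contained in $r(\gamma'_1)$ plus its buffer; consequently the isolated pre-INPs of $r(\gamma_1)$ lying wholly inside $r(\gamma'_1)$ stay isolated in $\rho$ and are re-identified with the same auxiliary edges of $\gamma'_1$. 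The symmetric statement holds at the other end, so $\gamma_{*} = \gamma'_1 \circ \gamma_{1,2} \circ \gamma'_2$, where $\gamma_{1,2}$ is the normalization of the ``affected zone'' consisting of the two buffers and the cancellation point between them. This zone has absolute length $\leq 2 E_0$ in $\rho$, and normalization only shortens (replacing pre-INPs of length $\geq 2$ by single auxiliary edges), so $|\gamma_{1,2}|_{abs} \leq 2 E_0 \leq E$.

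\textbf{Plan for (2).} Under the additional hypothesis that $\gamma_1 \circ \gamma_2$ is reduced in $\GG^1$, I will show $|\sigma_1|, |\sigma_2| \leq E_0$; by the above construction this forces $|\gamma''_i|_{abs} \leq |\sigma_i| + 2 E_0 \leq 3 E_0 \leq E$. Let $e_1, e_2$ be the terminal and initial edges of $\gamma_1, \gamma_2$; reducedness gives $e_1 \neq \bar e_2$. A short case analysis on whether $e_1, e_2$ are auxiliary reduces the bound to the claim that the cancellation cannot exhaust an entire pre-INP at the junction. If both $e_1, e_2$ are auxiliary, the pre-INPs $r(e_1)$ and $\overline{r(e_2)}$ are distinct (distinct auxiliary edges carry distinct pre-INPs), and by the structural fact recalled just before Definition \ref{preINP} -- no pre-INP is a boundary subpath of another -- neither is a prefix of the other, so cancellation must stop strictly within them. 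In the mixed case ($e_1$ non-auxiliary and $e_2$ auxiliary, or vice versa) the same fact rules out cancelling a full pre-INP across the junction; in the doubly non-auxiliary case reducedness already forbids any cancellation.

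\textbf{Main obstacle.} The genuine difficulty lies in part (2): ruling out a \emph{cascading} cancellation that would chew through an entire pre-INP sitting at the junction and then continue to cancel further edges of $\gamma_1$ against edges of $\gamma_2$. Excluding this cascade rests squarely on the ``no pre-INP is a boundary subpath of another pre-INP'' property highlighted in \S 5 together with the injectivity of the assignment from auxiliary edges to their pre-INPs; these are precisely the ingredients that force $E$ to depend only on the ambient $\beta$-train track map $f$ and not on the particular paths $\gamma_1, \gamma_2$.
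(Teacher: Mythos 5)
Your part (1) is essentially the paper's argument: both of you form $r(\gamma_1)\circ r(\gamma_2)$, cancel, and invoke the locality of the isolated-pre-INP condition (it can only fail within a $2E_0$-neighbourhood of the concatenation point). Your ``buffer'' packaging is fine.

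Part (2) is where you diverge from the paper, and where your argument has a genuine gap. The paper proves (2) by a \emph{global} contradiction that simply re-uses part (1): if $\overline{r(\gamma_1)}$ and $r(\gamma_2)$ shared a long common initial segment $\gamma_0$, then (by the locality established in (1)) the isolated pre-INPs of $\gamma_0$, away from a bounded terminal piece, are the same whether $\gamma_0$ is read inside $\overline{r(\gamma_1)}$ or inside $r(\gamma_2)$; hence the normalized paths $\overline{\gamma_1}$ and $\gamma_2$ would also share a long common initial segment, contradicting that $\gamma_1\circ\gamma_2$ is reduced. You instead try to bound the cancellation \emph{locally} by a case analysis on the boundary edges $e_1,e_2$ and the ``no pre-INP is a boundary subpath of another pre-INP'' fact. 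That fact handles nesting (one pre-INP being a prefix or suffix of another), but it does not rule out \emph{boundary overlaps}: a pre-INP $P$ inside $r(\gamma_2)$ that begins strictly in the interior of the prefix $\overline{r(e_1)}$ and ends after it. Such a $P$ is exactly what can destroy the isolation of $\overline{r(e_1)}$ in $r(\gamma_2)$ without any pre-INP being a boundary subpath of another, and the paper's Definition \ref{preINP} is deliberately formulated to accommodate this possibility. In the mixed case ($e_1$ auxiliary, $e_2$ not) you cannot even invoke the ``two pre-INPs'' fact at the junction, since $r(e_2)=e_2$ is a single edge; cancellation can then cascade into pre-INPs sitting farther inside $\gamma_1$ and $\gamma_2$, and your local analysis does not control that. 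So as written, the claim $|\sigma_1|,|\sigma_2|\leq E_0$ is not established.

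The fix is to abandon the boundary-edge case analysis and argue globally as the paper does: if the cancellation were long, part (1)'s locality would force $\overline{\gamma_1}$ and $\gamma_2$ to share a long common initial segment, which contradicts the reducedness hypothesis on $\gamma_1\circ\gamma_2$. This sidesteps all questions about how pre-INPs can overlap near the junction.
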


\begin{proof}
(1) We first observe that by definition of normalized paths the two
paths $r(\gamma_{1})$ and $r(\gamma_{2})$ in $\Gamma$ are reduced.
Hence there is an initial subpath $r_{1}$ of $r(\gamma_{1})$ as well
as a terminal subpath $r_{2}$ of $r(\gamma_{2})$ such that the
(possibly non-reduced) concatenation $r(\gamma_{1}) \circ
r(\gamma_{2})$ of the reduced paths $r(\gamma_{i})$ can be
simplified to give the reduced path $r_{1} \circ r_{2}$. The claim
now is a direct consequence of the following observation: Any
pre-INP $r(e)$ in the subpaths $r_{i}$ is isolated in $r_{i}$ if and
only if it is isolated in the concatenation $r_{1} \circ r_{2}$,
unless $r(e)$ is contained in a neighborhood of the concatenation
point. But the seize of this neighborhood only depends on the
maximal absolute length of any pre-INP in $\GG^{1}$ and is hence
independent of the particular paths considered.

\smallskip
\noindent (2) In order to prove the stronger claim (2) it suffices
to show that, if the concatenation $\gamma_{1} \circ \gamma_{2}$ is
reduced, then the possible cancellation in $r(\gamma_{1}) \circ
r(\gamma_{2})$ is bounded.

By way of contradiction, assume that the reduced paths
$\overline{r(\gamma_{1})}$ (= the path $r(\gamma_{1})$ with
orientation reversed) and $r(\gamma_{2})$ have a long common initial
segment $\gamma_{0}$. By the argument given above in part (a), the
occurrences of isolated pre-INP's in $\gamma_{0}$, other than in a
terminal subsegment of $\gamma_{0}$ of a priory bounded length, do
not depend on whether we consider the segment $\gamma_{0}$ as part
of $\overline{r(\gamma_{1})}$ or of $r(\gamma_{2})$. But then the
normalized paths $\overline\gamma_{1}$ and $\gamma_{2}$ will also
have a long common initial segment, which contradicts the assumption
that $\gamma_{1} \circ \gamma_{2}$ is reduced.
\end{proof}

The following is crucially used in the next section:

\begin{corollary}
\label{oldsevenfour} For any constant $D > 0$ there exists a bound
$K > 0$ which has the following property: Let $\gamma = \gamma_{1}
\circ \gamma_{2} \circ \gamma_{3}$ be a concatenated path in
$\GG^{1}$, and assume that their lengths satisfy:

\smallskip
(i)  \, $| \gamma_{1} |_{abs} \, \, \leq D$

\smallskip
(ii) $| \gamma_{2} |_{rel} \, \, = 0$

\smallskip
(iii) $| \gamma_{3} |_{abs} \, \, \leq D$

\smallskip
\noindent
Then the normalized path $\gamma_{*}$ has relative length
$$| \gamma_{*} |_{rel} \, \, \leq K \, .$$
\end{corollary}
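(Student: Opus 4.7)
The plan is to apply Lemma \ref{compositionofnormalized}(1) twice, once at each of the two concatenation points, exploiting the crucial fact that the middle piece $\gamma_{2}$ contributes nothing to the relative length.

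First, I would observe that $|\gamma_{2}|_{rel} = 0$ forces $\gamma_{2}$ to lie entirely in the relative part $X \subset \GG^{1}$: any train track edge (even partially traversed) contributes positive relative length. By Proposition \ref{legalandrelative}(b) the normalized path $(\gamma_{2})_{*}$ then also lies in $X$, so $|(\gamma_{2})_{*}|_{rel} = 0$.

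Next, I would bound the outer pieces. Since there are only finitely many auxiliary edges, there is a constant $L > 0$ bounding the absolute length of every pre-INP in $\Gamma$. As $\gamma_{1}$ has absolute length at most $D$, applying $r$ replaces each of its auxiliary edges by a pre-INP of length at most $L$, so $|r(\gamma_{1})|_{abs} \leq DL$; reducing in $\Gamma$ and then replacing isolated pre-INPs by single auxiliary edges cannot increase length, so $|(\gamma_{1})_{*}|_{rel} \leq |(\gamma_{1})_{*}|_{abs} \leq DL$, and the identical bound holds for $(\gamma_{3})_{*}$.

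The final step is to compose. Using the uniqueness clause of Proposition \ref{uniquenormalized} to identify $(\gamma_{1} \circ \gamma_{2})_{*}$ with $((\gamma_{1})_{*} \circ (\gamma_{2})_{*})_{*}$, Lemma \ref{compositionofnormalized}(1) produces a decomposition $(\gamma_{1} \circ \gamma_{2})_{*} = p_{1} \circ e_{12} \circ q_{2}$ with $p_{1}$ an initial subpath of $(\gamma_{1})_{*}$, $q_{2}$ a terminal subpath of $(\gamma_{2})_{*}$ (hence still in $X$), and $|e_{12}|_{abs} \leq E$. Summing relative lengths gives $|(\gamma_{1} \circ \gamma_{2})_{*}|_{rel} \leq DL + E$. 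A second application, now to $(\gamma_{1} \circ \gamma_{2})_{*}$ and $(\gamma_{3})_{*}$, yields $\gamma_{*} = p_{12} \circ e_{23} \circ q_{3}$ with $|e_{23}|_{abs} \leq E$ and $q_{3}$ a terminal subpath of $(\gamma_{3})_{*}$, hence $|\gamma_{*}|_{rel} \leq (DL + E) + E + DL$, and we may take $K = 2DL + 2E$. The argument is essentially a bookkeeping exercise; the only step requiring genuine care — and thus the main (mild) obstacle — is ensuring that the two applications of Lemma \ref{compositionofnormalized}(1) produce consistent decompositions of the same normalized path $\gamma_{*}$, which is guaranteed by Proposition \ref{uniquenormalized}.
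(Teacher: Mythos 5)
Your proof is correct and follows the same overall architecture as the paper's: normalize the three pieces separately, bound the relative length of each, then combine via Lemma~\ref{compositionofnormalized}(1). The one place you diverge is in bounding $|(\gamma_1)_*|_{abs}$ (and likewise for $\gamma_3$): the paper invokes Proposition~\ref{legalandrelative}(c), i.e.\ that normalized paths lift to absolute quasi-geodesics, so that a normalized path joining endpoints at absolute distance $\leq D$ has absolute length bounded in terms of $D$; you instead give a direct combinatorial estimate, observing that $r$ sends each of the $\leq D$ edges of $\gamma_1$ either to itself or to a pre-INP of uniformly bounded absolute length $L$, and that the subsequent reduction and replacement of isolated pre-INPs by auxiliary edges can only shorten the path, so $|(\gamma_1)_*|_{abs} \leq DL$. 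Both routes are valid; yours is more elementary and yields an explicit constant ($K = 2DL + 2E$), while the paper's is terser but leans on the quasi-geodesy machinery it has already built. Your two explicit applications of Lemma~\ref{compositionofnormalized}(1), glued together via the uniqueness statement of Proposition~\ref{uniquenormalized}, are exactly what the paper leaves implicit in the phrase ``implies directly.''
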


\begin{proof}
We consider the normalized paths $\gamma_{i*}$ and observe that, by
Proposition \ref{legalandrelative} (c),  the absolute length of
$\gamma_{1*}$ and $\gamma_{3*}$ is bounded above by a constant only
dependent on $D$. Furthermore, the relative length is always smaller
or equal to the absolute one. Hence the sum of the relative lengths
of the $\gamma_{i*}$ depends only on $D$, and Lemma
\ref{compositionofnormalized} (1) implies directly that the same is
true for the relative length of the normalized path $\gamma_{*}$.
\end{proof}

\begin{lemma}
\label{normalizationproblem1}
There exists a constant $J \geq 1$ such that for any
path $\gamma$ in $\GG^{1}$ the following holds, where $ILT(\cdot)$
denotes the number of illegal turns in a path:

\begin{enumerate}
\item[(a)]
If $\gamma$ is non-reduced, then the path $\gamma'$ obtained from
$\gamma$ through reduction in $\GG^{1}$ satisfies
$$ILT(\gamma') \leq ILT(\gamma)$$

\item[(b)]
If $\gamma$ is reduced, and $\gamma_{*}$ is obtained from $\gamma$
through normalization, then one has:
$$ILT(\gamma_{*}) \leq J \cdot ILT(\gamma)$$
\end{enumerate}
\end{lemma}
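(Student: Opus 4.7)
The plan is to treat parts (a) and (b) separately, exploiting the finiteness of the set of INPs (and hence of pre-INPs) of the $\beta$-train track $f: \GG^{2} \to \GG^{2}$.

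For part (a), I would decompose the reduction of $\gamma$ to $\gamma'$ into a sequence of elementary cancellations of adjacent inverse pairs $e \circ \bar e$ and verify that $ILT$ is non-increasing at each step. If $e \in X$ lies in the relative part, the cancellation only modifies the reduced representative of some $\chi$ within its homotopy class in $X$, which by Convention \ref{conventiononpaths}(2) is invisible to turn legality, so $ILT$ is unchanged. If $e \in \hat \Gamma$ is a train-track edge, the path before cancellation exhibits a degenerate turn $(e, \emptyset, \bar e)$ which fails to be relatively reduced and is therefore illegal; cancellation removes this turn and merges the two flanking turns $(e_{0}, \chi_{0}, e)$, $(\bar e, \chi_{1}, e_{1})$ into a single turn $(e_{0}, \chi_{0} \chi_{1}, e_{1})$. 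Even if this merged turn is itself illegal, we have traded at least one illegal turn for at most one, so $ILT(\gamma') \leq ILT(\gamma)$.

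For part (b), the quantitative input is that there are only finitely many pre-INPs, each of bounded absolute length and containing at most some uniform constant $N_{0}$ of illegal turns (roughly, one tip per nested INP). I would compute $\gamma_{*}$ from $\gamma$ in two stages: first expand each auxiliary edge of $\gamma$ to its pre-INP and reduce in $\Gamma$ to obtain $r(\gamma)$; then replace every isolated pre-INP of $r(\gamma)$ by its associated auxiliary edge. The illegal turns of $\gamma_{*}$ can be classified into three kinds: (i) turns inherited from $\gamma$ at positions untouched by the normalization, (ii) turns appearing at the boundary between a re-absorbed pre-INP and the surrounding path, (iii) turns strictly inside a non-isolated pre-INP of $r(\gamma)$. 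Kinds (i) and (ii) are absolutely bounded by a constant multiple of $ILT(\gamma)$ using standard bookkeeping, while kind (iii) contributes at most $N_{0}$ per non-isolated pre-INP of $r(\gamma)$.

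The main obstacle is to bound the number of non-isolated pre-INPs of $r(\gamma)$ by a constant multiple of $ILT(\gamma)$. The starting observation is that if $ILT(\gamma) = 0$ then $\gamma$ is strongly legal, so by Proposition \ref{legalandrelative}(a) it already coincides with $\gamma_{*}$ and no non-isolated pre-INP can appear. For positive $ILT(\gamma)$, a pre-INP in $r(\gamma)$ fails to be isolated precisely when it overlaps another pre-INP in more than a point, and such an overlap must arise either from two auxiliary edges of $\gamma$ lying close enough for their pre-INP expansions to overlap (which, by part (b) of Definition-Remark \ref{stronglylegal}, forces the intermediate turn of $\gamma$ to be illegal) or from a cancellation during the reduction of $r(\gamma)$ (which, by part (a), descends from an illegal turn of $\gamma$). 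In either case each non-isolated pre-INP is charged to an illegal turn of $\gamma$ with bounded multiplicity, and combined with the $N_{0}$ bound per pre-INP this yields the desired linear estimate $ILT(\gamma_{*}) \leq J \cdot ILT(\gamma)$ for a suitable choice of $J$.
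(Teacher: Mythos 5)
Your proof of part (a) is essentially the paper's argument, spelled out as an induction over elementary cancellations: the key point, that a non-reduced turn $(e,\emptyset,\bar e)$ in $\GG^1$ with $e$ a train track edge is automatically illegal, is exactly the observation the paper records, and your additional tracking of the merged turn is correct.

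Your proof of part (b) takes a genuinely different route, and it has gaps. The paper splits $\gamma$ at its $k=ILT(\gamma)$ illegal turns into maximal strongly legal pieces, notes that these are already normalized (Proposition \ref{legalandrelative}(a)), and then invokes Lemma \ref{compositionofnormalized}(1) iteratively: each concatenation disturbs the normalization only on a window of absolute length at most $E$ around the junction, so $\gamma_*$ consists of $k$ short pieces of length $\leq E$ interleaved with strongly legal segments, yielding $ILT(\gamma_*)\le E\cdot k$ with $J=E$. Crucially, this avoids any direct combinatorial analysis of $r(\gamma)$ and its pre-INPs. You instead try to control $r(\gamma)$ directly, and the main bottleneck you correctly identify --- bounding the number of non-isolated pre-INPs of $r(\gamma)$ by $O(ILT(\gamma))$ --- is not established by your case split. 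You account for overlaps arising from two nearby auxiliary edges or from cancellation, but you omit the ``accidental'' pre-INPs: $r(\gamma)$ can contain subpaths that coincide with some $r(e)$ without any auxiliary edge of $\gamma$ being involved, and these can cluster without any illegal turn of $\gamma$ nearby. You also omit the nested pre-INPs produced when a single auxiliary edge $e$ of $\gamma$ is expanded to $r(e)$, which already fail to be isolated among themselves. Moreover, the citation of Definition-Remark \ref{stronglylegal}(b) to force an illegal intermediate turn does not do what you want: that clause concerns the one-step replacement $\hat r(e_i)=\eta_i$ of auxiliary edges by INPs, whereas pre-INPs are the fully iterated images $r(e)$, so the proximity of two auxiliary edges in $\gamma$ is not directly connected to overlaps of the corresponding $r(e)$'s via that definition. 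Finally, the assertion that kinds (i) and (ii) are ``absolutely bounded by a constant multiple of $ILT(\gamma)$ using standard bookkeeping'' leaves unaddressed the possibility of long-range cancellation in $r(\gamma)$ changing the turn structure far from the original illegal turns; this is precisely the phenomenon the paper's Lemma \ref{compositionofnormalized}(2) is designed to rule out, and your argument does not appeal to it. As written, part (b) is not a complete proof.
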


\begin{proof}
(a)  This is a direct consequence of the fact that at every turn $e
\circ \chi \circ e'$ of $\gamma$, where $\chi$ is a reduced path in
$X$, either $\gamma$ is reduced (in the graph $\GG^{1}$), or  $\chi$
is trivial and $e' = \bar e$, in which case the turn is illegal.

\smallskip
\noindent
(b)
We first use Proposition \ref{legalandrelative} (b) to observe that
the maximal strongly legal subpaths of $\gamma$ are normalized. We
then use iteratively Lemma \ref{compositionofnormalized} (1) to obtain
$k = ILT(\gamma)$ subpaths $\gamma_{i}$ of $\gamma_{*}$, each
of absolute length
bounded
above
by the constant $E$ from Lemma \ref{compositionofnormalized}
(1), such that every complementary subpath of the union of the $\gamma_{i}$
in $\gamma_{*}$ is strongly legal. But the number of illegal turns in
any $\gamma_{i}$
cannot exceed the absolute length of $\gamma_{i}$,
which gives directly the claim.
\end{proof}

\begin{proposition}
\label{inversegrowth} Let $f: \GG^{2} \to \GG^{2}$ be a
$\beta$-train track map. Then there is a integer $K \geq 1$ such
that for any normalized path $\gamma$ in $\GG^{1}$, the number
$ILT(\cdot)$ of illegal turns satisfies:
$$ILT(\gamma) \, \, \, \geq \, \, \,
2
\,\, \,  ILT(f^{K}(\gamma)_{*})$$
\end{proposition}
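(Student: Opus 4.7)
My plan is to combine the local resolution of illegal turns provided by Theorem \ref{betterttrepresentative}(h) with the junction control from Lemma \ref{compositionofnormalized}. First I would decompose the normalized path $\gamma$ into its $k+1$ maximal strongly legal subpaths $\sigma_1,\ldots,\sigma_{k+1}$ separated by the $k = ILT(\gamma)$ illegal turns $\tau_1,\ldots,\tau_k$. By Theorem \ref{betterttrepresentative}(d), each image $f^K(\sigma_i)$ is again strongly legal, so any illegal turn in $f^K(\gamma)_*$ must originate from a region surrounding the $f^K$-image of some $\tau_i$.

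The key input is Theorem \ref{betterttrepresentative}(h): for each fixed $n$ there is a uniform iteration count $\hat t(n)$ such that every path with $n$ illegal turns becomes homotopic rel endpoints to a strongly legal path after $f^{\hat t(n)}$ followed by normalization. I would group the illegal turns of $\gamma$ into consecutive pairs $(\tau_{2j-1},\tau_{2j})$, yielding $\lfloor k/2 \rfloor$ pairs (plus possibly one leftover). For each pair, the enclosing subpath $\sigma_{2j-1}\tau_{2j-1}\sigma_{2j}\tau_{2j}\sigma_{2j+1}$ carries exactly two illegal turns, so choosing $K \geq \hat t(2)$ ensures that this subpath, viewed in isolation, normalizes to a strongly legal path. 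The between-pair subpaths $\sigma_{2j+1}$ of $\gamma$ are strongly legal from the start and their $f^K$-images remain so by Theorem \ref{betterttrepresentative}(d).

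To bound $ILT(f^K(\gamma)_*)$ globally, I would view this normalized path as the normalization of the concatenation of these pair-regions and between-pair regions, each (after its own local normalization) strongly legal. Iterated application of Lemma \ref{compositionofnormalized}(1) then yields a decomposition of $f^K(\gamma)_*$ as strongly legal subpaths separated by junctions of absolute length at most $E$. Only these junctions can carry illegal turns, and since there are $\lfloor k/2 \rfloor$ pair-regions there are on the order of $k/2$ junctions, each of bounded absolute length and hence bounded illegal-turn count. Combining with Lemma \ref{normalizationproblem1}(b), this produces a total bound on $ILT(f^K(\gamma)_*)$ of order $k/2$.

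The main obstacle is to trade the implicit constant factor hiding in the previous step for the precise coefficient $2$ demanded by the proposition. My plan is to enlarge $K$ further, exploiting the expansion property of Theorem \ref{betterttrepresentative}(e): choosing $K$ large enough makes each $f^K$-image of a nontrivial strongly legal subpath lying between two junctions so long that the junctions become genuinely isolated from one another; a second application of Theorem \ref{betterttrepresentative}(h), now applied to the bounded-size junction regions themselves (which have a uniformly bounded number of illegal turns), absorbs each junction into the adjacent strongly legal pieces and leaves at most one residual illegal turn per pair. This yields $ILT(f^K(\gamma)_*) \leq \lfloor k/2 \rfloor \leq k/2$, as required. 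The delicate technical point to verify is precisely that, for $K$ sufficiently large, each junction produced by Lemma \ref{compositionofnormalized}(1) does indeed contribute at most one residual illegal turn to $f^K(\gamma)_*$, and not a larger constant.
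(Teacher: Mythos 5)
Your overall strategy — split $\gamma$ into subpaths with a controlled number of illegal turns, iterate $f$ until each subpath normalizes to a strongly legal path via Theorem~\ref{betterttrepresentative}(h), then recombine and control the junctions with Lemma~\ref{compositionofnormalized} and Lemma~\ref{normalizationproblem1} — is exactly the paper's. The structural difference is the chunk size: you group illegal turns into \emph{pairs}, i.e.\ your subpaths carry at most $2$ illegal turns, whereas the paper's subpaths carry up to $2J+1$ illegal turns (with $J$ the constant of Lemma~\ref{normalizationproblem1}).

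That choice is not cosmetic, and it is where your argument breaks. After $f^{K}$ each chunk is strongly legal, and in the reassembled concatenation there is one junction per chunk boundary. By Lemma~\ref{normalizationproblem1}(a)--(b) the reduction-then-normalization step can multiply the number of illegal turns by the constant $J$, so each junction can contribute up to $J$ illegal turns to $f^{K}(\gamma)_{*}$, not~$1$. With pairs you have on the order of $k/2$ junctions, so your bound on $ILT(f^{K}(\gamma)_{*})$ is of order $J\cdot k/2$, which overshoots by the factor $J$. Your proposed repair — take $K$ even larger and claim each junction then yields at most one residual illegal turn — does not close this gap: the composition constant $E$ of Lemma~\ref{compositionofnormalized}(1) and the blow-up factor $J$ of Lemma~\ref{normalizationproblem1}(b) are both intrinsic to $f$ and do \emph{not} shrink as $K$ grows. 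Enlarging $K$ lengthens the strongly legal stretches between junctions, but each junction region still has absolute length up to $E$ and can still carry a bounded but $>1$ number of illegal turns after normalization; a second application of (h) to those junction regions just restarts the same process and reproduces the same bounded junctions one level up. The paper's solution is a calibration trick rather than an asymptotic one: by taking chunks with $\leq 2J+1$ illegal turns, the number of junctions $k$ satisfies $k+1 \leq ILT(\gamma)/(2J)$, and then the crude estimate $ILT(f^{K}(\gamma)_{*}) \leq J\cdot k$ already gives $\leq \tfrac{1}{2}ILT(\gamma)$. So the missing idea is to let the chunk size depend on $J$ so that the inevitable $J$-fold junction blow-up is absorbed by the scarcity of junctions, rather than trying to force each junction to contribute only one illegal turn.
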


\begin{proof}
We first consider any path $\gamma''$
in $\GG^{1}$ with
at most $2J + 1$ illegal turns,
for $J \geq 1$ as given in Lemma \ref{normalizationproblem1}.
By property (h) of Theorem
\ref{betterttrepresentative} there is a constant $K$ such that
$f^{K}(\gamma'')_{*}$
is strongly legal, for all such paths $\gamma''$.

We now subdivide $\gamma$ into $k + 1 \leq \frac{ILT(\gamma)}{2J}$
subpaths such that each subpath has $\leq 2J + 1$ illegal turns. We
consider the normalized $f^{K}$-image of each subpath, which is
strongly legal, and their concatenation $f^{K}(\gamma)$, which
satisfies $ILT(f^{K}(\gamma)) \leq k$, but is a priori not reduced,
and after reduction a priori not normalized. We then apply Lemma
\ref{normalizationproblem1} to obtain $ILT(f^{K}(\gamma)_{*}) \leq J
\cdot k$ and hence $ILT(f^{K}(\gamma)_{*}) \leq  \frac{1}{2}
ILT(\gamma)$.
\end{proof}

\begin{lemma}
\label{boundedcancellation}
There is a ``cancellation bound'' $C = C(f) > 0$ such that for any
concatenated
normalized path $\gamma = \gamma_{1} \circ \gamma_{2}$ the normalized
image path decomposes as $f(\gamma)_{*}  =
\gamma'_{1}  \circ \gamma_{1, 2} \circ \gamma'_{2}$, with
$f(\gamma_{1})_{*} = \gamma'_{1}  \circ \gamma''_{1}$ and
$f(\gamma_{2})_{*} = \gamma''_{2}  \circ \gamma'_{2}$, and all
three, $\gamma_{1, 2},
\gamma''_{1}$ and $\gamma''_{2}$ have length $\leq C$.
\end{lemma}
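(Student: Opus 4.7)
The plan is to apply Lemma~\ref{compositionofnormalized}~(1) to the two normalized paths $f(\gamma_1)_*$ and $f(\gamma_2)_*$: their concatenation is homotopic rel.\ endpoints in $\GG^2$ to $f(\gamma_1 \circ \gamma_2) = f(\gamma)$, and hence to $f(\gamma)_*$ by the uniqueness in Proposition~\ref{uniquenormalized}. This already yields the desired decomposition $f(\gamma)_* = \gamma'_1 \circ \gamma_{1,2} \circ \gamma'_2$ with $|\gamma_{1,2}|_{abs} \leq E$, so the remaining task is to bound $|\gamma''_1|_{abs}$ and $|\gamma''_2|_{abs}$, which measure the cancellation between $f(\gamma_1)_*$ and $f(\gamma_2)_*$ at their common endpoint.

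I would obtain this cancellation bound by pushing the problem into $\Gamma$ and invoking Cooper's classical bounded cancellation lemma for the relative train-track map $f_\Gamma = r \circ f|_\Gamma$ of Theorem~\ref{betterttrepresentative}~(b). Each $\gamma_i$ being normalized, $r(\gamma_i)$ is reduced in $\Gamma$, and since $\gamma$ itself is normalized (the natural reading of ``concatenated normalized path''), $r(\gamma) = r(\gamma_1) \circ r(\gamma_2)$ is reduced too. Because $r$ is a homotopy equivalence fixing $\Gamma$ pointwise, the inclusion $\Gamma \hookrightarrow \GG^2$ is a $\pi_1$-isomorphism, and so the universal cover $\tilde \Gamma$ embeds $\pi_1$-equivariantly as a subtree of $\tilde \GG^2$. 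Lifting, the common endpoint $\tilde p$ lies on the tree-geodesic $[\tilde a, \tilde b]$. Classical bounded cancellation, applied to the Lipschitz map $f_\Gamma$, yields a constant $C_0 = C_0(f_\Gamma)$ such that
$$
d_{\tilde \Gamma}\bigl(\tilde f_\Gamma(\tilde p),\, [\tilde f_\Gamma(\tilde a),\,\tilde f_\Gamma(\tilde b)]\bigr) \leq C_0,
$$
which says exactly that the reduced paths $r(f(\gamma_1)_*)$ and $r(f(\gamma_2)_*)$ in $\Gamma$ cancel over at most $C_0$ edges at the junction.

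Finally I would translate this $\Gamma$-bound back to $\GG^1$: since $f(\gamma_i)_*$ is obtained from $r(f(\gamma_i)_*)$ by replacing each isolated pre-INP by a single auxiliary edge (Definition~\ref{normalized}), a substitution that can only decrease the edge count, one obtains $|\gamma''_i|_{abs} \leq |r(\gamma''_i)|_{abs} \leq C_0 + L$, where $L$ is the maximum absolute length of any pre-INP (finite since there are only finitely many INPs for $f$). Setting $C := \max(E,\, C_0 + L)$ completes the proof. The main delicate point is precisely this last translation: an isolated pre-INP of $r(f(\gamma_i)_*)$ may straddle the cut separating the cancellation segment from the surviving part, and the additive $L$ is introduced to absorb exactly this ambiguity.
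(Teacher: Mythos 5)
Your proposal is correct and follows essentially the same route as the paper's (terse) proof: the paper's two steps --- bounded cancellation in $\Gamma$, which it attributes to $f$ inducing a quasi-isometry on $\tilde \GG^{2}$, followed by a translation to normalized paths ``by the same arguments as in the proof of Lemma~\ref{compositionofnormalized}~(2)'' --- are precisely Cooper's bounded cancellation lemma for $f_\Gamma$ and your $+L$ pre-INP correction at the cut. Your use of Lemma~\ref{compositionofnormalized}~(1) to handle the middle piece $\gamma_{1,2}$ separately is a minor reorganization of the same ingredients, not a different method.
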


\begin{proof}
The analogous statement, with every normalized path replaced by its
(reduced !)
image in $\Gamma$ under the retraction $r$, follows directly from the
fact that $f$ represents an automorphism and hence induces a
quasi-isometry on the universal covering of $\GG^{2}$, with respect to
the absolute metric.

To deduce now the desired statement for normalized paths it suffices
to apply the same arguments as in the proof of Lemma
\ref{compositionofnormalized} (2).
\end{proof}

Let $\gamma$ be a path in $\GG^{1}$, and let $C > 0$ be any
constant.  We say that a strongly legal subpath $\gamma'$ of
$\gamma$ has {\em strongly legal $C$-neighborhood} in $\gamma$ if
$\gamma'$ occurs as subpath of a larger strongly legal subpath
$\gamma''$ of $\gamma$ which is of the form $\gamma'' = \gamma_{1}
\circ \gamma' \circ \gamma_{2}$, where each of the $\gamma_{i}$
either has relative length $| \gamma_{i} |_{rel} \, \, = C$, or else
$\gamma_{i}$ is a boundary subpath (possibly of length 0) of
$\gamma$.

In other
words, there is no illegal turn in $\gamma$ that has relative
distance $< C$ within $\gamma$ from the subpath $\gamma'$.

\smallskip

Let  $b(f) \geq 1$ denote the {\em expansion exponent} of $f$,
defined to be the smallest positive exponent such that for any
edge $e \in \hat \Gamma$ the image
$f^{b(f)}(e)$
is an edge path of relative length $\geq 2$.
The existence of $b(f)$ is a direct consequence of statement (e) of
Theorem \ref{betterttrepresentative}.

\begin{proposition}
\label{forwardgrowth} Let $f: \GG^{2} \to \GG^{2}$ be a
$\beta$-train track map, let $b = b(f)$ be the expansion exponent of
$f$, and let $C = C(f^b)$ be the cancellation bound for $f^b$ as
given in Lemma \ref{boundedcancellation}. Then for any normalized
edge path $\gamma$ in $\GG^{1}$ the following holds:

\smallskip
\noindent Every strongly legal subpath $\gamma_{0}$ with strongly
legal $C$-neighborhood in $\gamma$ is mapped by $f^b$ to a strongly
legal path $\gamma'_{0}$ which is contained as subpath with strongly
legal $C$-neighborhood in $f^b(\gamma)_{*}$. Furthermore, their
relative lengths satisfy:
$$| \gamma'_{0} |_{rel} \, \, \geq \, \, 2 | \gamma_{0}
|_{rel}$$
\end{proposition}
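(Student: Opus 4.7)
The plan is to combine three ingredients: strongly legal paths are preserved under $f$ (Theorem \ref{betterttrepresentative}(d)), the expansion exponent $b=b(f)$ doubles the relative length of each train track edge under $f^b$ (Theorem \ref{betterttrepresentative}(e)), and the bounded cancellation of Lemma \ref{boundedcancellation} controls what is lost when normalizing a concatenation. Let $\gamma'' = \gamma_L \circ \gamma_0 \circ \gamma_R$ be the larger strongly legal path witnessing the $C$-neighborhood of $\gamma_0$ in $\gamma$, and write $\gamma = \alpha \circ \gamma'' \circ \beta$, with the convention that $\alpha$ or $\beta$ is empty in the boundary cases.

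I would take $\gamma'_0 := f^b(\gamma_0)$. Iterating Theorem \ref{betterttrepresentative}(d) $b$ times shows that $f^b(\gamma'')$ is strongly legal, hence normalized by Proposition \ref{legalandrelative}(a), with the natural decomposition $f^b(\gamma'') = f^b(\gamma_L) \circ f^b(\gamma_0) \circ f^b(\gamma_R)$. The length doubling is then immediate: the $|\gamma_0|_{rel}$ train track edges of the strongly legal path $\gamma_0$ each map by $f^b$ to a subpath with at least two train track edges by definition of $b$, and strong legality prevents cancellation among their successive images, so $|f^b(\gamma_0)|_{rel} \geq 2|\gamma_0|_{rel}$. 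The identical estimate yields $|f^b(\gamma_L)|_{rel} \geq 2C$ and $|f^b(\gamma_R)|_{rel} \geq 2C$ whenever the corresponding piece is a non-boundary buffer.

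The substantive step is to verify that $f^b(\gamma_0)$ sits intact as a subpath of $f^b(\gamma)_*$ with strongly legal $C$-buffer on each side. I would apply Lemma \ref{boundedcancellation} (stated for $f$, but used here for $f^b$, whence the constant $C = C(f^b)$) twice: first to the splitting $\gamma = \alpha \circ (\gamma'' \circ \beta)$, and then to the inner concatenation $\gamma'' \circ \beta$. This shows that in passing from $f^b(\gamma) = f^b(\alpha) \circ f^b(\gamma'') \circ f^b(\beta)$ to the normalized path $f^b(\gamma)_*$, at most an initial and terminal segment of absolute length $\leq C$ is trimmed from each side of $f^b(\gamma'')$, with middle pieces of absolute length $\leq C$ inserted only outside the surviving portion. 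Since $|\cdot|_{rel} \leq |\cdot|_{abs}$, the relative-length loss on each side is $\leq C$; combined with $|f^b(\gamma_L)|_{rel} \geq 2C$ this leaves a remnant of $f^b(\gamma_L)$ of relative length $\geq C$ between the cancellation zone and $f^b(\gamma_0)$, which is strongly legal as a subpath of $f^b(\gamma'')$. The argument on the right is symmetric, and in the boundary case there is no $\alpha$-side cancellation at all, so $f^b(\gamma_L)$ appears as a boundary subpath of $f^b(\gamma)_*$.

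The main obstacle is the careful bookkeeping forced by the two simultaneous roles of the constant $C$: it measures both the required buffer in $\gamma$ and the worst-case cancellation produced by $f^b$. The proof is designed to balance these exactly, with the expansion exponent $b$ doubling the buffer from relative length $C$ to $\geq 2C$, precisely enough to absorb the cancellation $\leq C$ and preserve a full $C$-neighborhood. A minor technical wrinkle is confirming that the subpaths $\alpha, \gamma'', \beta$ of the normalized path $\gamma$ are themselves normalized so that Lemma \ref{boundedcancellation} applies directly; this follows from Definition \ref{normalized} since isolated pre-INPs in a subpath remain isolated when the subpath is viewed on its own, with a short additional check that no pre-INP straddles a junction point of the splitting (which can always be arranged by perturbing the splitting points by a bounded amount, contributing only to the absolute constants).
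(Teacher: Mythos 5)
Your proof is correct and follows essentially the same route as the paper: length doubling via the expansion exponent $b$, preservation of strong legality under $f^b$ (so $f^b(\gamma_0)$ is normalized), and then Lemma \ref{boundedcancellation} to show the $2C$-buffer produced by $f^b$ survives normalization with at least a $C$-buffer remaining. Your version is somewhat more explicit about the splitting $\gamma = \alpha \circ \gamma'' \circ \beta$ and the two applications of the cancellation lemma, but the underlying argument is the same.
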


\begin{proof}
By definition of the exponent $b$ every strongly legal path
$\gamma_{0}$ is mapped to a path $f^b(\gamma_{0})$ of relative
length $| f^b(\gamma_{0}) |_{rel} \, \, \, \geq \,  2 | \gamma_{0}
|_{rel} \, $.

Now, every strongly legal path is normalized (by Proposition
\ref{legalandrelative} (a)), and the image of a strongly legal path
is again strongly legal (by Theorem \ref{betterttrepresentative}
(d)). Since $\gamma_{0}$ has strongly legal $C$-neighborhood, and
$b$ is the expansion constant of $f$, the path $f^{b}(\gamma_{0}) =
f^{b}(\gamma_{0})_{*}$ has strongly legal $2C$-neighborhood in the
(possibly unreduced and after reduction not normalized) path
$f^b(\gamma)$. But then Lemma \ref{boundedcancellation} proves
directly that in the normalized path $f^b( \gamma)_{*}$ the path
$f^{b}(\gamma_{0})$ has still strongly legal $C$-neighborhood.
\end{proof}

\begin{corollary}
\label{expanding} For every $\lambda > 1$ there exist an integer $N
\geq 1$ such that, if $\gamma$ is a normalized path in $\GG^{1}$
then

\smallskip
\noindent
(a) either the normalized path $f^{N}(\gamma)_{*}$ has
relative
length
$$| f^{N}(\gamma)_{*} |_{rel} \, \, \, \geq
\, \, \, \lambda | \gamma |_{rel}$$

\smallskip
\noindent (b) or else any normalized path $\gamma'$ in $\GG^{1}$
with $f^{N}(\gamma')_{*} = \gamma$ satisfies
$$| \gamma' |_{rel} \, \, \,\geq \, \, \,
\lambda | \gamma |_{rel} \, .$$
\end{corollary}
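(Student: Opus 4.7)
\noindent
The plan is to split $\gamma$ at its illegal turns into maximal strongly legal pieces, and then to play Proposition \ref{forwardgrowth} (forward expansion of strongly legal subpaths) against Proposition \ref{inversegrowth} (exponential increase of $ILT$ under backward iteration), according to whether $\gamma$ has few or many illegal turns relative to $|\gamma|_{rel}$. Let $b = b(f)$, let $K$ be as in Proposition \ref{inversegrowth}, and let $C = C(f^b)$ be the constant appearing in Proposition \ref{forwardgrowth}. Given a normalized $\gamma$ with $k := ILT(\gamma) \geq 1$, write $\gamma = \sigma_1 \circ \cdots \circ \sigma_{k+1}$ as the splitting at its illegal turns, where each $\sigma_i$ is maximally strongly legal and $\sum_i |\sigma_i|_{rel} = |\gamma|_{rel}$. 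Define the ``$C$-core'' $\sigma_i^0$ of $\sigma_i$ by deleting an initial and a terminal subsegment, each of relative length $C$ (set $\sigma_i^0 := \emptyset$ if $|\sigma_i|_{rel} < 2C$); then $\sigma_i^0$ has strongly legal $C$-neighborhood in $\gamma$ and $|\sigma_i^0|_{rel} \geq |\sigma_i|_{rel} - 2C$.

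Set $N := b\,p_b$. Iterating Proposition \ref{forwardgrowth} $p_b$ times yields that each $f^N(\sigma_i^0)$ occurs as a strongly legal subpath of $f^N(\gamma)_*$ of relative length $\geq 2^{p_b} |\sigma_i^0|_{rel}$, and the inductive preservation of the $C$-neighborhood property guarantees that these images sit as pairwise disjoint subpaths of $f^N(\gamma)_*$; summing gives
\[
|f^N(\gamma)_*|_{rel} \;\geq\; 2^{p_b}\,\bigl(|\gamma|_{rel} - 2C(k+1)\bigr).
\]
We dichotomize on $k$. If (Few) $k+1 \leq |\gamma|_{rel}/(4C)$, the above bound simplifies to $|f^N(\gamma)_*|_{rel} \geq 2^{p_b - 1} |\gamma|_{rel}$, so alternative (a) holds provided $p_b \geq 1 + \log_2 \lambda$. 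Otherwise (Many) $k+1 > |\gamma|_{rel}/(4C)$, equivalently $|\gamma|_{rel} < 4C(k+1) \leq 8Ck$ using $k \geq 1$. Assuming $N = K\,p_K$ is also divisible by $K$, and given any normalized preimage $\gamma'$ with $f^N(\gamma')_* = \gamma$, iterating Proposition \ref{inversegrowth} $p_K$ times yields $ILT(\gamma') \geq 2^{p_K} k$; since any normalized path satisfies $|\gamma'|_{rel} \geq ILT(\gamma')$ (consecutive illegal turns in a normalized path share at most one train track edge, so $k$ illegal turns require at least $k+1$ train track edges), we conclude $|\gamma'|_{rel} \geq 2^{p_K} k > (2^{p_K}/(8C))|\gamma|_{rel}$, establishing (b) provided $p_K \geq \log_2(8C\lambda)$. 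The residual case $k = 0$ is immediate from Proposition \ref{forwardgrowth} applied to $\gamma_0 := \gamma$ (which has trivial, i.e.\ boundary, $C$-neighborhood), giving $|f^N(\gamma)_*|_{rel} \geq 2^{p_b} |\gamma|_{rel}$.

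Choosing $N$ as any common multiple of $b$ and $K$ with $N/b \geq 1 + \log_2 \lambda$ and $N/K \geq \log_2(8C\lambda)$ completes the proof. The main technical point to verify is the persistence and pairwise disjointness of the trimmed cores $f^{pb}(\sigma_i^0)$ through all $p_b$ iterations of $f^b$: this is guaranteed because the $C$-buffer around each $\sigma_i^0$ at least doubles under each application of $f^b$, which more than absorbs the bounded cancellation ($\leq C$ per concatenation point, by Lemma \ref{boundedcancellation}) accrued during normalization, so the cores survive intact inside distinct strongly legal pieces of $f^{pb}(\gamma)_*$ at every step, and the bound $|f^N(\gamma)_*|_{rel} \geq 2^{p_b}\sum_i |\sigma_i^0|_{rel}$ is justified.
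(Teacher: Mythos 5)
Your proof is correct and takes essentially the same approach as the paper's: split $\gamma$ at its illegal turns, trim to $C$-cores with strongly legal $C$-neighborhoods, and dichotomize on $|\gamma|_{rel}$ versus (a multiple of) $ILT(\gamma)$, applying Proposition \ref{forwardgrowth} iteratively in the long-pieces case and Proposition \ref{inversegrowth} iteratively in the many-turns case. The paper's own argument is terser (it uses the threshold $|\gamma|_{rel} \geq 3Ck$ and leaves the choice of a single $N$ serving both alternatives implicit), whereas you have spelled out the common-multiple condition on $N$ and the elementary lower bound $|\gamma'|_{rel} \geq ILT(\gamma')$; these are worthwhile clarifications but do not constitute a different route.
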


\begin{proof}
Let $k \geq 0$ be the number of illegal turns in $\gamma$ and set $C
= C(f^b(f))$ as in Proposition \ref{forwardgrowth}. There are
finitely many (at most $k + 1$ ones) maximal strongly legal subpaths
$\gamma_{i}$ with strongly legal $C$-neighorhood in $\gamma$. If $|
\gamma |_{rel} \, \, \, \geq 3C k$, then the total relative length
of the $\gamma_{i}$ exceeds $\frac{1}{3} | \gamma |_{rel}$. Applying
Proposition \ref{forwardgrowth} iteratively to each of them gives
directly the claim (a).

\smallskip

If $| \gamma |_{rel} \, \, \, < 3C k$ we apply iteratively
Proposition \ref{inversegrowth}. Since the relative length of any of
the strongly legal subpath of $\gamma'$ between two adjacent illegal
turns is bounded below by 1 (= the length of any edge in the train
track part), we derive directly the existence of a constant $N \geq
1$ that has the property claimed in statement (b).
\end{proof}

\begin{remark}
Corollary \ref{oldsevenfour} above is used crucially in the next
section. The proof given in this section relies on the particular
properties of normalized paths as introduced in this paper. In this
remark we would like to propose an alternative proof, which dates
back to the original plan for this paper. It is conceptually simpler
in that it doesn't directly appeal to train track technology and to
the very intimate knowledge of normalized paths which we have used
earlier in this section. It uses though some of the later material
of this section, such as Corollary \ref{expanding}. However, the
latter is anyway used crucially in \S 8 below.

We do not present this alternative proof in full detail, but we
believe that the interested reader can recover the latter from the
sketch given here. We first collect the following observations; only
the last one is non-trivial, and none of them uses Corollary
\ref{oldsevenfour}:

\begin{enumerate}
  \item The relative length of a path is smaller or equal to the absolute
  length of the same path.
  \item The map $f$ induces a quasi-isometry for both, the relative
  and absolute metric.
  \item The relative part is
  quasi-convex in the given
  $2$-complex $\tilde \GG^{2}$.
  \item The absolute length of any geodesic in the tree $\widetilde{\Gamma}$
  which connect two points
  in a same connected component of the relative part grows at
  most polynomially, under iteration into the future and also into the past.
  \item Any subpath of a normalized path is normalized, up to adding
  subpaths with absolute length uniformly bounded above at its
  extremities.
  \item The relative length of each normalized path is expanded by a
  factor $\lambda > 1$ after $N$ iterations, either into the future or
  into the past. (This is the content of Corollary \ref{expanding}.)
\end{enumerate}

\noindent We conclude:

\medskip
{\em Let $c$ be a normalized path. If $c^\prime \subset c$ has its
endpoints at absolute distance less than $C$ from two points in a
same connected component of the relative part, then the relative
length of $c^\prime$ is bounded above by some constant depending
only on $C$}. (This is essentially the content of Corollary
\ref{oldsevenfour}.)

\medskip

The proof of this conclusion is not really hard and only requires to
manipulate properly some of the inequalities that are given by the
above observations: One compares the exponential expansion of the
relative length to the polynomial expansion of the absolute length,
which eventually leads to a contradiction, unless the above
conclusion holds.

\end{remark}

\section{Normalized paths are relative quasi-geodesics}

In this section we consider the universal covering $\tilde \GG^{2}$
of $\GG^{2}$ with respect to both, the {\em absolute metric} $d_{abs}$
and the {\em relative metric} $d_{rel}$, which are
defined by lifting the absolute and the
relative edge lengths respectively from $\GG^{2}$ to $\tilde \GG^{2}$.
We also ``lift'' the terminology: for example, the {\em relative part}
of $\tilde \GG^{2}$ is the lift of the relative part $X \subset \GG^{2}$.

\smallskip

We first note that every connected component of the relative
part of $\tilde \GG^{2}$ is quasi-convexly embedded, with respect to
the absolute metric, since the fundamental group of any connected
component of $X$ is a finitely generated subgroup of the free group
$\pi_{1} \GG^{2} = \FN$.

\smallskip

Next, we recall that $\tilde \GG^{2}$, with respect to the
absolute metric, is quasi-isometric to a metric tree: Such a
quasi-isometry is given for example by any lift of the retraction
$r: \GG^{2} \to \Gamma$ to $\tilde r: \tilde \GG^{2} \to \tilde
\Gamma$, which is again a retraction, and
$\tilde \Gamma$ is a metric simplicial tree with free
$\FN$-action.

\smallskip

Finally, let us recall that a path $\gamma$ is a $(\lambda,
\mu)$-quasi-geodesic, for given constants $\lambda > 0$, $\mu \geq
0$, if and only if for every subpath $\gamma^\prime$ of $\gamma$,
with endpoints $x^\prime$ and $y^\prime$, one has:
$$| \gamma' | \, \, \leq \, \, \lambda \,
d(x^\prime,y^\prime) + \mu$$

\begin{proposition}
\label{hausdorff} For all constants $\lambda, \mu > 0$ there are
constants $\lambda', \mu', C > 0$, such that the following holds in
$\tilde \GG^{2}$:

 For every absolute $(\lambda, \mu)$-quasi-geodesic
$\gamma$ there exists a relative $(\lambda', \mu')$-quasi-geodesic
$\hat \gamma$ which is of absolute Hausdorff distance $\leq C$ from
$\gamma$.
\end{proposition}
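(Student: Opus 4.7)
My plan is to take $\hat\gamma$ to be the normalized path joining the endpoints $x,y$ of $\gamma$ in $\tilde\GG^{2}$, as provided by Proposition \ref{uniquenormalized} applied to the projection of $\gamma$ in $\GG^{2}$ (and then lifted; the homotopy class of any path between $x$ and $y$ is unique, since $\tilde\GG^{2}$ is simply connected). By Proposition \ref{legalandrelative}(c) the normalized path $\hat\gamma$ is an absolute quasi-geodesic with constants depending only on the train-track map $f$.

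The Hausdorff bound is then quickly obtained. The retraction $\tilde r\colon \tilde\GG^{2}\to\tilde\Gamma$ is an absolute quasi-isometry onto the tree $\tilde\Gamma$, because $r\colon\GG^{2}\to\Gamma$ is a homotopy equivalence restricting to the identity on $\Gamma$ (Theorem \ref{betterttrepresentative}(b)). Two quasi-geodesics in a tree sharing endpoints lie in a bounded neighbourhood of the tree geodesic between those endpoints, hence at bounded Hausdorff distance from one another; pulling back along $\tilde r$ gives a uniform absolute Hausdorff bound between $\gamma$ and $\hat\gamma$.

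The main work is to show that $\hat\gamma$ is a relative quasi-geodesic. Pick any subpath $\hat\gamma'$ with endpoints $x',y'$, and set $k=d_{rel}(x',y')$. Choose a relatively-minimizing path $\sigma$ from $x'$ to $y'$ and decompose $\sigma=\sigma_{0}\cdot e_{1}\cdot\sigma_{1}\cdots e_{k}\cdot\sigma_{k}$, where each $e_{i}$ is a train-track edge and each (possibly trivial) $\sigma_{i}$ lies in a single connected component $V_{i}$ of the relative part of $\tilde\GG^{2}$. I aim to produce subdivision points $x'=z_{0},z_{1},\ldots,z_{k}=y'$ on $\hat\gamma'$ such that for every $i$ both $z_{i-1}$ and $z_{i}$ lie within a uniform absolute distance $D$ of $V_{i-1}$. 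Granted such a subdivision, Corollary \ref{oldsevenfour} applies to each piece of $\hat\gamma'$ between consecutive $z$'s (sandwich absolute detours of length $\le D$ around a zero-relative-length travel inside $V_{i-1}$, and observe that the normalization of the resulting concatenated path agrees with this piece of $\hat\gamma'$ by the uniqueness in Proposition \ref{uniquenormalized}), bounding its relative length by a constant $K=K(D)$. Summing yields $|\hat\gamma'|_{rel}\le (k+1)K$, which is the desired quasi-geodesy inequality.

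The crux, and the step I anticipate as the main obstacle, is producing the subdivision points $z_{i}$. The strategy is to project to the tree $\tilde\Gamma$: since $\hat\gamma'$ is normalized, $\tilde r(\hat\gamma')$ is the reduced tree geodesic from $\tilde r(x')$ to $\tilde r(y')$, while $\tilde r(\sigma)$ traverses the same tree geodesic (possibly with backtracking), passing through the images $\tilde r(V_{i})$ in order. Since each vertex space is quasi-convex in $\tilde\GG^{2}$ (it is the universal cover of a $X_{v}$ with finitely generated $\pi_{1}$ by Theorem \ref{betterttrepresentative}(a)), meeting $\tilde r(V_{i})$ in the tree translates to $\hat\gamma'$ passing within bounded absolute distance of $V_{i}$ in $\tilde\GG^{2}$, from which the $z_{i}$ are chosen. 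The delicate bookkeeping concerns the auxiliary edges of Aside \ref{doubleauxiliary}, which belong to the relative part but not to $\tilde\Gamma$: one must check that $\sigma$ can be chosen so that no train-track edge $e_{i}$ is effectively short-circuited through an auxiliary edge, ensuring that the components $V_{i}$ genuinely appear in order along the tree geodesic rather than being lost in a side detour. Once this is arranged, Corollary \ref{oldsevenfour} combined with the quasi-convexity of vertex spaces finishes the argument.
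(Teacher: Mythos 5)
Your choice of $\hat\gamma$ is genuinely different from the paper's, and the difference matters. The paper builds $\hat\gamma$ out of a relative geodesic $\gamma'$: it selects a minimal collection of subpaths $\gamma'_i$ of $\gamma'$ whose $\tilde r$-images cover the tree geodesic $[\tilde r(x),\tilde r(y)]$, splices them with short connecting arcs $\gamma'_j$ (kept inside a vertex space when possible, using quasi-convexity), and obtains control on the relative length for free, since the pieces come from the relative geodesic itself. You instead take $\hat\gamma$ to be the normalized path from $x$ to $y$. The Hausdorff bound is then immediate (quasi-geodesic stability in the tree, as you say), but the entire burden shifts to proving that the normalized path is a relative quasi-geodesic. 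That statement is precisely the paper's Proposition~\ref{Pour Martin 2}, which the paper proves \emph{after and using} Proposition~\ref{hausdorff}: there, the $\hat\gamma$ supplied by Proposition~\ref{hausdorff} is the absolutely close intermediary whose decomposition into relative and non-relative pieces is transferred, via Corollary~\ref{oldsevenfour}, to the normalized path. Your proposal inverts this logical order, and so has to establish the harder statement first, without the intermediary.

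The gap is at the subdivision-points step, and it is more than the ``delicate bookkeeping'' you flag. A relative geodesic $\sigma=\sigma_0 e_1\sigma_1\cdots e_k\sigma_k$ can cross a train-track edge $e_i$ and then immediately traverse an auxiliary edge $a$ whose pre-INP $r(a)$ begins with $\bar e_i$: this costs relative length $1$ for the turn $e_i\cdot a$, whereas pushing through the pre-INP would cost more, so such a turn is perfectly compatible with $\sigma$ being a relative geodesic and cannot be avoided by ``choosing $\sigma$ well.'' After applying $\tilde r$ and reducing in $\tilde\Gamma$, the edge $e_i$ cancels, and the visit to $\tilde r(V_i)$ can become an excursion whose depth off $[\tilde r(x'),\tilde r(y')]$ is not a priori bounded, since a relative geodesic controls the number of train-track crossings but not the absolute length of the $\sigma_i$'s. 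Consequently there is no guarantee that the normalized path $\hat\gamma'$ comes within uniformly bounded absolute distance of each $V_i$, which is exactly what your application of Corollary~\ref{oldsevenfour} requires. This is the difficulty the paper's proof of Proposition~\ref{hausdorff} is designed to sidestep: by assembling $\hat\gamma$ from pieces of $\gamma'$ that actually cover the tree geodesic, one never has to compare the normalized path directly against a relative geodesic, and the comparison against the normalized path is postponed to Proposition~\ref{Pour Martin 2}, where the corresponding endpoints are uniformly close by construction.
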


\begin{proof}
We consider a relative geodesic $\gamma'$ with same endpoints as
$\gamma$, as well as their images $\tilde r(\gamma)$ and $\tilde
r(\gamma')$.  The path $\tilde r(\gamma)$ is contained in an
absolute neighborhood of the geodesic segment $[x, y]$ in the tree
$\tilde \Gamma$, where $x$ and $y$ are the endpoints of $\tilde
r(\gamma)$.

Since $\tilde \Gamma$ is a tree, the path $\tilde
r(\gamma')$ must run over all of $[x, y]$,
so that we can consider a minimal collection of
subpaths $\gamma'_{i}$ of $\gamma'$ such that the union of all
$\tilde r(\gamma'_{i})$ contains the segment $[x, y]$.
(Here ``minimal'' means that no collection of proper subpaths of
the $\gamma'_{i}$ has the
same property).
We note that
the number of such subpaths is bounded above by the absolute length of $[x,
y]$.

We now enlarge these subpaths by a bounded amount, to ensure that
they are edge paths: This ensures that the preimage $\gamma'_{i}$ of
any such $\tilde r(\gamma'_{i})$
is either

(i)
completely contained in the relative part, or else

(ii)
it is of relative
length $\geq 1$.

\noindent Now, the adjacent endpoints of any two subsequent
$\gamma'_{i}$ can be connected by paths $\gamma'_{j}$ of bounded
absolute length in $\tilde \GG^{2}$, and, if the two endpoints
belong to the same connected component of the relative part, then by
the absolute quasi-convexity of the latter we can assume that
$\gamma'_{j}$ as well belongs to this component. In particular, we
observe that the number of paths $\gamma'_{j}$ that are not
contained in the relative part is bounded above by the relative
length of $\gamma'$.

Hence the path $\hat \gamma$, defined as
alternate concatenation of the $\gamma'_{i}$ and
$\gamma'_{j}$, has relative length given as sum of the relative length
of the pairwise dijoint subpaths $\gamma'_{i}$ of the relative
geodesic $\gamma'$, plus the relative length of the $\gamma'_{j}$,
which is uniformly bounded. Since the number of $\gamma'_{j}$ is also
bounded by the relative
length of $\gamma'$, it follows that there are constants as in the
proposition which bound the relative length of $\hat \gamma$.

Since the very same arguments extend to all subpaths of $\hat
\gamma$, it follows  directly that $\hat \gamma$ is a relative
quasi-geodesic as claimed.
\end{proof}

Below we need the following lemma; its proof follows directly from
the definition of a quasi-geodesic and the inequality
$d_{rel}(\cdot, \cdot) \leq d_{abs}(\cdot, \cdot)$.

\begin{lemma}
\label{boundedrelative} For any constants $\lambda, \mu> 0$, every
relative $(\lambda, \mu)$-quasi-geodesic $\gamma$ in $\tilde
\GG^{2}$, which does not traverse any edge from the relative part,
is also an absolute $(\lambda, \mu)$-quasi-geodesic. \qed
\end{lemma}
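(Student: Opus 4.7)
The plan is to observe that the hypothesis ``does not traverse any edge from the relative part'' forces the absolute and relative lengths of $\gamma$ (and of every subpath) to agree, and then to combine this with the obvious inequality $d_{rel} \le d_{abs}$ to transfer the quasi-geodesy condition from the relative metric to the absolute metric.

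In more detail, I would first recall that by definition every edge $e$ of $\tilde{\GG}^{1}$ from the train track part $\hat \Gamma$ satisfies $L_{abs}(e) = L_{rel}(e) = 1$, whereas every edge $e'$ from the relative part $X$ satisfies $L_{abs}(e') = 1$ and $L_{rel}(e') = 0$. Consequently, for any path $\delta$ in $\tilde{\GG}^{1}$ one has $|\delta|_{rel} \le |\delta|_{abs}$, with equality precisely when $\delta$ contains no edge from the relative part. Passing from paths to distances, by minimizing over all paths between two points $x,y$, we obtain the pointwise inequality $d_{rel}(x,y) \le d_{abs}(x,y)$.

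Now let $\gamma'$ be any subpath of $\gamma$ with endpoints $x'$ and $y'$. Since $\gamma$ by hypothesis contains no edges from the relative part, the same is true of $\gamma'$, and the first observation above gives $|\gamma'|_{abs} = |\gamma'|_{rel}$. Combining this equality with the relative quasi-geodesy of $\gamma$ and the distance inequality just derived yields
\[
|\gamma'|_{abs} \;=\; |\gamma'|_{rel} \;\le\; \lambda\, d_{rel}(x',y') + \mu \;\le\; \lambda\, d_{abs}(x',y') + \mu,
\]
which is precisely the statement that $\gamma$ is an absolute $(\lambda,\mu)$-quasi-geodesic.

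There is no real obstacle: the lemma is essentially a tautology, and the only ``content'' is the matching of the two edge-length conventions on the train track part of $\tilde \GG^{2}$. The only point worth being slightly careful about is allowing the first or last edge of $\gamma$ to be only partially traversed (as per Convention \ref{conventiononpaths}); but since in that case the partially traversed edge must belong to $\hat \Gamma$, the identity $L_{abs} = L_{rel}$ on this fragment still holds, so the argument goes through unchanged.
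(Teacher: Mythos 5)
Your proof is correct and follows exactly the route the paper indicates: it invokes the paper's stated definition of a quasi-geodesic (the subpath-length inequality) together with the pointwise inequality $d_{rel} \le d_{abs}$, which is precisely what the sentence preceding the lemma says. The only addition you make — noting that a partially traversed boundary edge must lie in $\hat\Gamma$ by Convention~\ref{conventiononpaths} — is a sensible but minor refinement.
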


\begin{proposition}
\label{Pour Martin 2} There exist constants $\lambda, \mu > 0$ such
that in $\tilde \GG^{2}$ any lift $\gamma$ of a normalized path
$\gamma_{0}$ in $\GG^{1}$ is a relative
$(\lambda,\mu)$-quasi-geodesic.
\end{proposition}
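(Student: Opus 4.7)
The plan is to combine Proposition~\ref{hausdorff} with the ``no relative detour'' content of Corollary~\ref{oldsevenfour}. First, I fix a lift $\gamma$ in $\tilde\GG^{2}$ of the normalized path $\gamma_{0}$. By Proposition~\ref{legalandrelative}(c), $\gamma$ is an absolute $(\lambda_{1},\mu_{1})$-quasi-geodesic with constants independent of $\gamma_{0}$. Applying Proposition~\ref{hausdorff} to $\gamma$ produces a relative $(\lambda_{2},\mu_{2})$-quasi-geodesic $\hat\gamma$ at absolute Hausdorff distance at most some $C>0$ from $\gamma$.

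For an arbitrary subpath $\gamma'\subset\gamma$ with endpoints $x,y$, I pick $\hat x,\hat y$ on $\hat\gamma$ with $d_{abs}(x,\hat x),d_{abs}(y,\hat y)\le C$, and let $\hat\gamma'\subset\hat\gamma$ denote the subpath between them. Since $d_{rel}\le d_{abs}$ one has $|\hat\gamma'|_{rel}\le \lambda_{2}\,d_{rel}(x,y)+\mu_{2}+2C\lambda_{2}$, so the proposition reduces to establishing an affine upper bound of the form $|\gamma'|_{rel}\le\lambda_{3}\,|\hat\gamma'|_{rel}+\mu_{3}$.

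I would obtain this last bound by subdividing $\hat\gamma'$ at each of its train track edges, producing at most $|\hat\gamma'|_{rel}+1$ maximal subpaths lying entirely in the relative part; each such subpath is contained in a single connected component of the (absolutely quasi-convex) relative part. Tracking the Hausdorff proximity between $\gamma'$ and $\hat\gamma'$, each such piece corresponds to two points on $\gamma'$ each within absolute distance $C$ of a point in a common relative-part component. The subpath of $\gamma'$ between these points is homotopic rel endpoints to a path of the form $\gamma_{1}\circ\gamma_{2}\circ\gamma_{3}$ with $|\gamma_{1}|_{abs},|\gamma_{3}|_{abs}$ uniformly bounded and $|\gamma_{2}|_{rel}=0$, so Corollary~\ref{oldsevenfour} bounds the relative length of its normalization by a uniform constant $K$. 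Since a subpath of a normalized path is itself normalized up to boundary pieces of uniformly bounded absolute length (a consequence of the characterization of normalized paths through isolated pre-INPs in $r(\gamma)$), its own relative length agrees with $K$ up to an additive constant; summing over the $O(|\hat\gamma'|_{rel})$ pieces yields the sought estimate.

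The hard part will be making this ``tracking'' step rigorous: one must arrange the companion points on $\gamma'$ so that they genuinely land near the same relative-part component as the corresponding endpoints on $\hat\gamma'$, and so that the induced subdivision of $\gamma'$ respects the linear order and exhausts $\gamma'$ without overlap. This should rest on the absolute quasi-convexity of the relative-part components together with a direct application of Lemma~\ref{boundedrelative} to handle the pure-train-track portions of $\hat\gamma'$ between consecutive relative-part pieces.
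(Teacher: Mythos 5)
Your strategy is the same as the paper's: reduce to an affine bound $|\gamma'|_{rel}\leq \lambda_3\,|\hat\gamma'|_{rel}+\mu_3$, decompose $\hat\gamma'$ into its maximal relative-part pieces $\hat\gamma_i$ and complementary train-track pieces $\hat\gamma_j$, handle the corresponding segments of $\gamma'$ via Corollary~\ref{oldsevenfour} (for the $\hat\gamma_i$) and via Lemma~\ref{boundedrelative} plus absolute quasi-geodesy (for the $\hat\gamma_j$), and count pieces by $|\hat\gamma'|_{rel}$. You also correctly anticipate the need to compare a subpath of a normalized path to its own normalization, which is legitimate (cf.~observation~5 in the remark preceding \S 7).

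However, you have left unresolved exactly the point you flag as ``the hard part,'' and this is a genuine gap in your proposal. You demand that the companion subdivision of $\gamma'$ ``respects the linear order and exhausts $\gamma'$ without overlap,'' and you would indeed be stuck trying to enforce that: when two consecutive relative-part pieces $\hat\gamma_i$ of $\hat\gamma'$ are close to each other, their companion subpaths $\gamma_i$ on $\gamma'$ can overlap, and the connecting paths can degenerate or even run against the orientation of $\gamma'$. The paper's resolution is to \emph{not} ask for a partition. Define $\gamma_i$ as the maximal subpath of $\gamma'$ whose endpoints are $C_1$-close to those of $\hat\gamma_i$, and let $\gamma_j$ be whatever subpath of $\gamma'$ joins the terminal endpoint of one $\gamma_i$ to the initial endpoint of the next (possibly of length~$0$, possibly backwards). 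The only thing the argument needs is that the alternating concatenation of all the $\gamma_i$ and $\gamma_j$ \emph{traverses every point of $\gamma'$}, hence has total relative length $\geq |\gamma'|_{rel}$. The upper bound then follows from $|\gamma_i|_{rel}\leq K$ (Corollary~\ref{oldsevenfour}, up to your boundary correction) and $|\gamma_j|_{rel}\leq D_1|\hat\gamma_j|_{rel}+D_2$ (which uses $|\hat\gamma_j|_{abs}=|\hat\gamma_j|_{rel}$ together with Lemma~\ref{boundedrelative} and absolute quasi-geodesy of both $\gamma_j$ and $\hat\gamma_j$), with the number of summands bounded by $|\hat\gamma'|_{rel}$. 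In short, coverage suffices; you do not need the subdivision of $\gamma'$ to be injective or order-preserving.
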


\begin{proof}
We note that it suffices to prove:

\begin{enumerate}
\item[(*)]
\label{finito} There exist constants $C_1, C_2, C_{3} \geq 0$ as
well as $\lambda' \geq 1$, $\mu' \geq 0$, such that for any subpath
$\gamma^\prime$ of $\gamma$, with endpoints $x', y'$ (of $\gamma'$),
there exist a relative $(\lambda^\prime,\mu^\prime)$-quasi-geodesic
$\hat \gamma^{\prime}$ with endpoints $\hat x^{\prime}, \hat
y^{\prime}$, such that $d_{rel}(x', \hat x') \leq C_{1}$ and
$d_{rel}(y', \hat y') \leq C_{1}$, and
$$| \gamma^{\prime} |_{rel} \, \, \,  \leq  \, \, C_2 \, \,
| \hat \gamma^{\prime} |_{rel} \, + \, \,  C_{3} \, . $$
\end{enumerate}

By Proposition \ref{legalandrelative} (c), the lift $\gamma$ of the
normalized path $\gamma_{0}$ is an absolute quasi-geodesic, for
quasi-geodesy constants independent of the choice of $\gamma$. Now,
Proposition \ref{hausdorff} gives
 a relative quasi-geodesic $\hat \gamma$ in an absolute Hausdorff
neighborhood of $\gamma$, where the seize of this neighborhood as
well as the quasi-geodesy constants are again independent of $\gamma$.
As a consequence, for any subpath $\gamma'$ of $\gamma$ we find a
corresponding subpath $\hat \gamma'$ of $\hat\gamma$ which satisfies the
endpoint conditions in (*) for some constant $C_{1} > 0$ independent of
our choices.

Without loss of generality we can assume that the path $\hat \gamma$
is contained in the 1-skeleton of $\tilde \GG^{2}$, and that
furthermore $\hat \gamma'$ is an edge path, i.e. starts and ends at
a vertex of $\tilde \GG^{2}$.

\smallskip

We now consider the set $\mathcal{\hat L}$ of maximal subpaths $\hat
\gamma_{i}$ of $\hat\gamma'$ which are contained in the relative
part. The collection of closed subpaths $\hat \gamma_{j}$ of $\hat
\gamma'$ complementary to those in $\mathcal{\hat L}$ is denoted by
$\mathcal{\hat L}^c$.  We observe that, by Lemma
\ref{boundedrelative}, every such $\hat \gamma_{j}$ is an absolute
quasi-geodesic, with quasi-geodesy constants depending only on
$C_{1}$ and not on our choice of $\hat \gamma'$. Furthermore, every
such $\hat \gamma_{j}$ has absolute length $\geq 1$ (= the relative
length of any edge outside the relative part), and we have:
$$| \hat \gamma_{j} |_{abs} \, \, \, = \, \, \,  | \hat
\gamma_{j} |_{rel}$$

The path $\gamma'$ inherits a natural ``decomposition'' $\mathcal{L}
\, \sqcup \, \mathcal{L}^{c}$ from the decomposition of $\hat
\gamma'$ into $\mathcal{\hat L} \, \sqcup \, \mathcal{\hat L}^{c}$:
In order to define the set $\mathcal{L}$, we associate to each
element $\hat \gamma_{i}$ of $\mathcal{\hat L}$ the maximal subpath
$\gamma_{i}$ of $\gamma'$ with the endpoints that are $C_{1}$-close
to the endpoints of $\hat \gamma_{i}$. We now apply Corollary
\ref{oldsevenfour}, to obtain that the relative length of each such
path $\gamma_{i}$ in ${\mathcal L}$ is smaller than some constant
$K > 0$ which is dependent on the seize of $C_{1}$ but independent
of all our choices.

We now define the collection $\mathcal{L}^{c}$ of subpaths of
$\gamma'$ simply as those subpaths $\gamma_{j}$
which connect the endpoints of the
corresponding  subsequent subpaths $\gamma_{i}$ from $\mathcal{
L}$ as defined above.  Of course, the $\gamma_{j}$ may have length 0,
or if two $\gamma_{i}$ overlap, they may run in the opposite direction
than $\gamma'$. But all this does not matter, as the concatenation
of all subsequent paths from $\mathcal{L}$ and $\mathcal{
L}^{c}$ clearly runs through all of $\gamma'$, and hence has bigger or
equal  relative length than $\gamma'$.

Now, by definition, for every path $\gamma_{j}$ in $\mathcal{L}^{c}$
there is a corresponding path $\hat \gamma_{j}$ in $\mathcal{\hat
L}^{c}$ that has endpoints $C_{1}$-close to the endpoints of
$\gamma_{j}$.  Since both, $\gamma_{j}$ and $\hat \gamma_{j}$ are
absolute quasi-geodesics, since the relative length is always
bounded above by the absolute length, i.e. $|  \gamma_{j} |_{rel} \,
\, \leq \,\, | \gamma_{j} |_{abs}\,$, and since we derived above $|
\hat \gamma_{j} |_{rel} \, \, =\, \, | \hat \gamma_{j} |_{abs}\, $,
there are constants $D_{1}, D_{2} > 0$ such that
$$| \gamma_{j} |_{rel} \, \, \, \leq \, \, \, D_{1} \, \,
\cdot | \hat \gamma_{j} |_{rel} \, + \, \, D_{2}$$ But the number of
alternating subpaths from $\mathcal L$ and $\mathcal{L}^c$ is equal
to that of $\mathcal{\hat L}$ and $\mathcal{\hat L}^c$ and thus
bounded above by the relative length of $\hat \gamma'$. Since the
relative length of each $\gamma_{i}$ in $\mathcal L$ is bounded by
the constant $K$, we obtain directly the existence of constants
$C_{1}, C_{2}$ and $C_{3}$ as claimed above in (*).
\end{proof}

\section{Proof of the Main theorem}

We first prove Proposition \ref{above} as stated in the Introduction.
The notion of a
relative hyperbolic automorphism is recalled in Definition \ref{hyperbolicauto}:

\bigskip
\noindent
{\em Proof of Proposition \ref{above}.}

We consider the universal covering
$\tilde \GG^{2}$ of the $\beta$-train track
$\GG^{2}$ from the $\beta$-train track representative $f: \GG^{2}
\to \GG^{2}$ of $\alpha$. We lift the relative length on edges to $\tilde
\GG^{2}$ to make $\tilde \GG^{2}$ into a pseudo-metric space,
and we pass over to the associated metric space $\hat \GG^{2}$ by
contracting every edge of length $0$.
This
amounts
precisely to contracting every connected component $\tilde X_{i}$ of
the full preimage $\tilde X$
of the relative part $X \subset \GG^{2}$ to a single
point $\hat X_{i}$.

\smallskip

We now lift the train track map $f$ to a map $\tilde f: \tilde
\GG^{2} \to \tilde \GG^{2}$ which represents $\alpha$ in the following
sense: For any $w \in \FN$ and any point $P \in \tilde \GG^{2}$ one
has:
$$\alpha(w) \tilde f P = \tilde f w (P)$$
Since $f$ maps $X$ to itself, the map $\tilde f$ induces canonically
a map $\hat f: \hat \GG^{2} \to \hat \GG^{2}$ that satisfies
similarly, for any $w \in \FN$ and any point $P \in \hat \GG^{2} \,$:
$$\alpha(w) \hat f P = \hat f w (P)$$

\smallskip

For our purposes below we also want, in addition to this ``twisted
commutativity property'', that $\hat f$ fixes a vertex of $\hat
\GG^{2}$ outside of the union $\hat X$ of all $\hat X_{i}$. To
ensure this we apply property (e) of Theorem
\ref{betterttrepresentative} and raise $f$ to a sufficiently high
power $f^k$ in order to find a fixed point in the interior of an
edge $e$ of $\hat \Gamma$ (i.e. outside of $X$): We then subdivide
edges finitely many times in order to make this fixed point into a
$f^k$-fixed vertex of $\GG^{2}$. We then lift $f^k$ to the map $\hat
f^k$ constructed above and compose it with the deck transformation
action of a suitable element $v \in \FN$ so that some lift of this
$f^k$-fixed vertex is fixed by $v \hat f^k$. It follows that $v \hat
f^k$ ``twistedly commutes'' with $\iota_{v} \, \alpha^k$ in the
above meaning, where $\iota_{v}$ denotes the inner automorphisms
$\iota_{v}: \FN \to \FN,  w \mapsto v w v^{-1}$.

\smallskip

By virtue of Remark \ref{stablerelhyp} we can continue to work with
$v \hat f^k$ and $\iota_{v} \, \alpha^k$ rather than with $\hat f$
and $\alpha$ as above, without loss of generality in our proof.
However, for simplicity of notation we stick for the rest of the
proof to $\hat f$ and $\alpha$, but we assume that $\hat f$ has a
fixed vertex $Q = \hat f(Q) \in \hat \GG^{2} \smallsetminus \hat X$.

\medskip

We now consider any generating system $S$ of $\FN$, and the associated
coned Cayley graph
$\Gamma^{\mathcal H(\alpha)}_{S}(\FN)$ as given in Definition
\ref{conedgraph}.
We define an $\FN$-equivariant map
$$
\psi:  \Gamma^{\mathcal H(\alpha)}_{S}(\FN) \to \hat \GG^{2}$$ by
sending the base point $V(1)$ to the above $\hat f$-fixed vertex $Q
\in \hat \GG^{2} \smallsetminus \hat X$. Every cone vertex of
$\Gamma^{\mathcal H(\alpha)}_{S}(\FN)$ is mapped to the
corresponding contracted connected component $\hat X_{i}$ of $\hat
X$. The correspondence here is given through the subgroup of $\FN$
which stabilizes a cone vertex of $\Gamma^{\mathcal
H(\alpha)}_{S}(\FN)$, since the same subgroup stabilizes also the
``corresponding'' contracted connected component $\hat X_{i}$ of
$\hat X$. Every edge $e$ of $\Gamma^{\mathcal H(\alpha)}_{S}(\FN)$
is sent to an edge path $\psi(e)$ in $\hat \GG^{2}$ of length
$L(\psi(e)) > 0 \,$: By construction no two distinct vertices of
$\Gamma^{\mathcal H(\alpha)}_{S}(\FN)$ are mapped by $\psi$ to the
same vertex in $\hat \GG^{2}$.

\smallskip

It follows that those edges of
$\Gamma^{\mathcal H(\alpha)}_{S}(\FN)$
that are adjacent to the same cone vertex are
mapped by $\psi$ to edge paths that all have the same length.
It is easy to see directly that the map $\psi$ is a quasi-isometry
(alternatively one can use Proposition 6.1 of \cite{Ga2}). Since
we are only interested in estimating the distance of vertices (which
are mapped by $\psi$ again to vertices), and any distinct two
vertices in either space have distance $\geq \frac{1}{2}$, we can
suppress
the additive
constant in the quasi-isometry inequalities to obtain
%
a constant $C >
0$ such that for all vertices $P, R \in \tilde
\Gamma^{\mathcal H(\alpha)}_{S}(\FN)$
one has:
$$\frac{1}{C} \, \, d(P, R) \leq d(\psi(P), \psi(R)) \leq C d(P, R)$$

Similarly, the canonical inequalities obtained from Proposition
\ref{Pour Martin 2}, which
describe that every normalized path in $\GG^{1}$ lifts to a
quasi-geodesic in $\hat \GG^{2}$,
will only be applied to edge
paths which are either of relative length 0 or are bounded away from 0
by 1 (= the length of the shortest edge in $\hat \GG^{1}$). Hence we obtain directly,
for a suitable constant $A > 0$ and any two vertices $P, R \in
\hat \GG^{2}$ that are connected by a normalized edge path
$\gamma(P, R)$, the inequalities:
$$d(P, R) \, \, \leq\, \, \,  | \gamma(P, R) |_{rel} \, \, \,
\leq \, \,  A \, \, d(P, R)$$

Thus we can calculate, for any $w \in \FN$
and for $\lambda > 0$ as given in Corollary \ref{expanding},
for which we first assume that
alternative (a)
holds:

$$
\begin{array}[t]{rcl}
| w |_{S, \mathcal{H}} \, \, \,  & = & \, \,  d(V(1), V(w)) \\
&\leq & C^{} \, \, \, d(\psi(V(1)), \psi(V(w)))  \\
& \leq & C^{}  | \gamma(\psi(V(1)), \psi(V(w))) |_{rel} \\
&\leq & \frac{C}{\lambda}^{}  | {\tilde f^{N}}(\gamma(\psi(V(1)),
\psi(V(w))))_{*} |_{rel} \\
&\leq &  {\frac{C}{\lambda}}^{{}^{}}  \, \, A \, \, d(\tilde f^{N}(\psi(V(1))),
\tilde f^{N}(\psi(V(w)))) \\
&\leq & \frac{A \, C}{\lambda}^{}   \,  \, \, d(\tilde f^{N}(Q),
\tilde f^{N}(w Q) \\
&\leq & \frac{A \, C}{\lambda}^{}  \, \, \, d(\tilde f^{N}(Q),
\alpha^{N}(w) \tilde f^{N}(Q) \\
&\leq & \frac{A \, C}{\lambda}^{}  \, \, \, d(Q,
\alpha^{N}(w)Q) \\
&\leq & \frac{A \, C}{\lambda}^{}  \, \, \, d(\psi(V(1)),
\psi(V(\alpha^{N}(w)))) \\
& \leq & \frac{A \, C}{\lambda}^{}  \, \, C \, \, d(V(1),
V(\alpha^{N}(w))) \\
&= & \frac{A \, C^{2}}{\lambda}^{}  \, | \alpha^{N}(w) |_{S,
\mathcal{H}}
\end{array}$$
 %
Since the constants $A$ and $C$ are independent of $N$,
a sufficiently large choice of $\lambda$ in Corollary
\ref{expanding} gives the desired conclusion (compare Definition
\ref{hyperbolicauto}).

The calculation for case (b) in Corollary \ref{expanding} is
completely analogous and not carried through here.
The only additional argument to be mentioned here is to ensure the
existence of a path $\gamma'$ as in Corollary \ref{expanding} (b).
But this follows directly from the fact that the $\beta$-train track map
$f: \GG^{2} \to \GG^{2}$
represents an automorphisms of $\FN$, so that we can assume that $f$ (and
thus $\tilde f$) is surjective:  Otherwise one could replace
$\GG^{2}$ by a proper $f$-invariant subcomplex, and the corresponding
restriction of $f$ would be again a $\beta$-train track map which has
otherwise the same
properties as $f$.

\qed
\bigskip

We can now give the proof of the main theorem of this paper as stated in the
Introduction:

 %



\bigskip
\noindent
{\em Proof of Theorem \ref{MainTheorem}.}
From Proposition
\ref{malnormal} we know that $\mathcal{H}(\alpha)$ is quasi-convex
and malnormal. Thus Lemma \ref{classique} implies that $\FN$ is
strongly hyperbolic relative to $\mathcal{H}(\alpha)$.
Furthermore,
from Proposition \ref{above} we know that $\alpha$ is hyperbolic
relative to $\mathcal{H}(\alpha)$.
Hence Theorem
\ref{montheoreme} implies directly the claim.
 %
 %
\qed


\section{The Rapid Decay property}
\label{RapidDecay}

The {\em Rapid Decay property} (or {\em property (RD)}) was originally
established by U. Haagerup for finitely generated free groups
\cite{Haagerup}.
Indeed,
it has
also
been called
``Haagerup inequality'' (compare \cite{Talbi}). The first to formalize and
study systematically property (RD) was Jolissaint in \cite{Jolissaint}. 
Subsequent to his
pioneering work, property (RD) was shown to hold for
various
classes of groups, in particular for hyperbolic groups \cite{delaharpeRD}, certain
classes of groups acting on CAT(0)-complexes \cite{Chatterji},
relatively hyperbolic groups \cite{DrutuRD}, etc.

\smallskip

The main importance of property (RD) comes from its applications
to
the Novikov conjecture and the Baum-Connes conjecture,
see \cite{Connes} and \cite{Lafforgue}. In particular, property (RD) is
useful in constructing explicit isomorphisms in this branch of K-theory.

\medskip

We review the basic definitions and results that we need to prove
Corollary \ref{RD}. Property (RD) may be stated in many equivalent
ways. We borrow from \cite{Jolissaint} the definition which seems to
be the simplest one for
somebody
with expertise in geometric group theory.

\smallskip




Consider any metric $d$ on a group $G$ which is equivariant with respect to the
action of $G$ on itself by left-multiplication.
The function $L : G \to \R, g \mapsto
d(1, g)$ is called a {\em length function on $G$}.

\smallskip

Such a length function, together with the choice of an exponent $s > 0$, is used to define a norm  $|| \cdot ||_{2,s,L}$ on the group algebra $\mc [G]$, which is given for any $\phi = \sum \phi(g) g \in \mc[G]$ by:
$$||\phi||_{2,s,L} = \sqrt{\sum_{g \in G}
\phi(g) \overline{\phi}(g) (1 + L(g))^{2s}}$$
We now consider the Hilbert space $l^2(G)$ and interpret any $ \phi \in\mc [G]$
as linear operator on $l^2(G)$, where the image of any $\psi \in l^2(G)$ is given by the convolution
$\phi*\psi$, defined as usual by $(\phi*\psi)(g) = \underset{h \in G}{\sum} \phi(h)
\psi(h^{-1}g)$.
We can now consider also for any $\phi \in \mc[G]$ the operator norm
$$||\phi|| = \sup_{\psi \in l^2(G)}
\frac{||\phi*\psi||_2}{||\psi||_2}\, ,$$
where
$||\psi||_2 = \sqrt{\underset{g \in G}{\sum} \psi(g)
\overline{\psi}(g)}$
denotes the classical $l_2$-norm of $\psi \in l^2(G)$.

\begin{definition} [\cite{Jolissaint}]
A group $G$ has {\em property (RD) with respect to a length function $L$}
if
there exist
constants $c > 0$ and $s > 0$ such
that, for any $\phi \in \mc [G]$, one has:
$$||\phi|| \leq c \, ||\phi||_{2,s,L}$$
%


\end{definition}

It is proved in \cite{Jolissaint} (Lemma 1.1.4 and Remark 1.1.7)
that, if a group $G$ has property (RD) with respect to the word-length
function given by a finite generating set, then it has property
(RD) with respect to any length function.

\begin{definition}
A finitely generated group $G$ has {\em property (RD)} if it has property
(RD) with respect to the word-length function defined by any
finite generating set of $G$.
\end{definition}

It is shown in \cite{Haagerup} that finitely generated free groups have property (RD).
Hence in the context of this section we don't need to
 refer to any other length
function.

\smallskip

For any homomorphism $\beta$ of a group $G$ with finite generating set $S$ 
one defines
$$a(\beta) = \max_{s \in S} |\beta(s)|_S \, ,$$
where $|g|_S$ denotes the word length of $g \in G$ with respect to $S$.
Following \cite{Jolissaint}, for a second group $\Gamma$ with finite generating system $\Sigma$, a homomorphism $\theta \colon
\Gamma \rightarrow \Aut{G}$ is said to have {\em polynomial amplitude}, if
there
exist positive numbers $c$ and $r$ such that, for any $\gamma \in
\Gamma$ one has:
$$a(\theta(\gamma)) \leq c (1 + |\gamma|_\Sigma)^r$$
This notion is easily seen to be independent of the particular choice of the generating systems $S$ and $\Sigma$.
%
%
The following result has been shown by Jolissaint:

\begin{proposition}
[\cite{Jolissaint}]
\label{resultat jolissaint}
Let $G$ and $\Gamma$ be two finitely generted groups,
and let $\theta \colon
\Gamma \rightarrow \Aut{G}$ be a homomorphism with polynomial amplitude.
%
%
%
If $\Gamma$ and $G$ have property (RD),
then
so does the semi-direct product
$G\rtimes_\theta \Gamma$
 defined by $\theta$.
\end{proposition}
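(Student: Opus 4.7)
The plan is to establish property (RD) for $G \rtimes_\theta \Gamma$ with respect to the natural length function $L(g, \gamma) = |g|_S + |\gamma|_\Sigma$, where $S$ and $\Sigma$ are the given finite generating sets of $G$ and $\Gamma$. Since the result of Jolissaint cited above (Lemma 1.1.4 of \cite{Jolissaint}) guarantees that property (RD) is independent of the chosen length function as soon as a single such inequality holds, it suffices to work with this convenient choice.

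First, I would decompose any $\phi \in \mc[G \rtimes_\theta \Gamma]$ along the $\Gamma$-coordinate by writing $\phi = \sum_{\gamma \in \Gamma} \phi_\gamma$, where $\phi_\gamma \in \mc[G]$ is defined by $\phi_\gamma(g) = \phi(g, \gamma)$. Unfolding the convolution in the semidirect product then produces an expression of the form $(\phi * \psi)_\gamma = \sum_{\delta \in \Gamma} \phi_\delta *_G (\theta(\delta) \cdot \psi_{\delta^{-1}\gamma})$, where $*_G$ denotes convolution in $\mc[G]$ and $\theta(\delta)$ acts on $\mc[G]$ by pushing functions forward through the automorphism. This reduces the problem to estimating operator norms on $\ell^2(G)$ fiber by fiber, and then assembling the resulting scalar function of $\gamma$ into an estimate on $\ell^2(\Gamma)$.

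Second, the RD property of $G$ lets me control each fiber operator norm by a $\|\cdot\|_{2,s_G,L_G}$ weighted $\ell^2$-norm, for some Jolissaint exponent $s_G$. Polynomial amplitude enters crucially at this point: applying $\theta(\delta)$ to a function on $G$ distorts its weighted $\ell^2$-norm by at most a factor of $c(1+|\delta|_\Sigma)^{r s_G}$, since $|\theta(\delta)(h)|_S \leq a(\theta(\delta))|h|_S \leq c(1+|\delta|_\Sigma)^r |h|_S$. Substituting these bounds into the semidirect convolution formula reduces the problem to a scalar inequality of the shape $\|\Phi *_\Gamma \Psi\|_{\ell^2(\Gamma)} \leq c' \|\Phi\|_{2, s_\Gamma, L_\Sigma} \|\Psi\|_{2, s_\Gamma, L_\Sigma}$, where $\Phi(\delta) = (1+|\delta|_\Sigma)^{r s_G} \|\phi_\delta\|_{2, s_G, L_G}$ and similarly for $\Psi$.

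Third, I would invoke property (RD) for $\Gamma$ on these scalar weight functions and repackage the combined polynomial factors into a single weight $(1 + L(g, \gamma))^{s}$ for a sufficiently large exponent $s$, using the trivial inequality $(1 + a)(1 + b) \leq (1 + a + b)^2$. The main obstacle, and the reason polynomial amplitude is precisely the right hypothesis, is to carry out the bookkeeping so that the total weight remains genuinely polynomial in $L(g,\gamma)$: without polynomial amplitude, the distortion factor $a(\theta(\delta))$ could grow exponentially in $|\delta|_\Sigma$, and no polynomial weight on $G \rtimes_\theta \Gamma$ could absorb it. Once the exponent arithmetic is verified, comparing the operator norm of $\phi$ on $\ell^2(G \rtimes_\theta \Gamma)$ with $\|\phi\|_{2, s, L}$ through these two successive applications of property (RD) delivers the desired Haagerup-type inequality.
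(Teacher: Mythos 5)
The paper does not give a proof of this proposition; it is quoted from Jolissaint's paper, so there is no in-house argument to compare against. Your outline does follow the general Jolissaint strategy (decompose along the $\Gamma$-coordinate, apply (RD) for $G$ fiberwise via the twisted convolution formula, then apply (RD) for $\Gamma$), but it has a genuine gap, and the gap is precisely at the point where you believe the argument is routine.

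The function $L(g,\gamma) = |g|_S + |\gamma|_\Sigma$ is \emph{not} a length function on $G\rtimes_\theta\Gamma$: it is not sub-additive, since $L\bigl((g,\gamma)(h,\delta)\bigr) = |g\,\theta(\gamma)(h)|_S + |\gamma\delta|_\Sigma$ and $|\theta(\gamma)(h)|_S$ may far exceed $|h|_S$. Hence the appeal to Jolissaint's independence-of-length-function lemma is illegitimate. What your fiberwise computation actually delivers is an inequality $\|\phi\|\leq c\,\|\phi\|_{2,s,L}$ for this ad-hoc weight $L$. Since $L(g,\gamma)\geq L_W(g,\gamma)$, the word length on the semidirect product (factor $(g,\gamma)=(g,1)(1,\gamma)$), this inequality is strictly \emph{weaker} than property (RD), and without a further argument you have proved nothing. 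Note also that the place you invoke polynomial amplitude is dispensable: in the estimate $\|(\phi*\psi)_\gamma\|_2\le\sum_\delta\|\phi_\delta\|_{op(G)}\,\|\theta(\delta)\cdot\psi_{\delta^{-1}\gamma}\|_2$, the pushforward by the bijection $\theta(\delta)$ preserves the plain $\ell^2(G)$-norm, so $\|\theta(\delta)\cdot\psi_{\delta^{-1}\gamma}\|_2=\|\psi_{\delta^{-1}\gamma}\|_2$, and one lands at the ad-hoc inequality with \emph{no} distortion factor at all. (Your scalar inequality should also have one $\ell^2$-norm and one weighted norm, not two weighted norms.) The true and only indispensable use of polynomial amplitude is exactly the step you skipped: converting the ad-hoc inequality into one with respect to $L_W$. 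If $(g,\gamma)$ is a product of $k$ generators from $S$ and $m$ generators from $\Sigma$, then $g$ is a product of at most $k$ conjugates $\theta(\delta_i)(s_i^{\pm1})$ with $|\delta_i|_\Sigma\le m$, so polynomial amplitude gives $|g|_S\le k\cdot c(1+m)^r$, hence $L\le c'(1+L_W)^{r+1}$ and $\|\phi\|_{2,s,L}\le c''\|\phi\|_{2,s(r+1),L_W}$, which closes the gap. Without polynomial amplitude this comparison fails and so does the theorem, as it must: for instance $\Z^2\rtimes_A\Z$ with $A$ Anosov is solvable of exponential growth, hence does not have (RD), even though both $\Z^2$ and $\Z$ do.
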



We will also need the following theorem
due to
Drutu-Sapir:

\begin{theorem}
[\cite{DrutuRD}]
\label{resultat Drutu}
Let $G$ be a group which is strongly
hyperbolic relative to a finite family
$\mathcal H$ of finitely generated
subgroups $\mathcal H_j$. If all the subgroups in $\mathcal H_j$ have
property (RD), then so does $G$.
\end{theorem}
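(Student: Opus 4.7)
The plan is to exploit the fact that the coned Cayley graph $\Gamma^{\mathcal H}_S(G)$ is both $\delta$-hyperbolic and fine, adapting the classical Haagerup/de la Harpe proof of property (RD) for word-hyperbolic groups to the relatively hyperbolic setting. By Jolissaint's alternative characterization of (RD), it suffices to produce constants $c, s > 0$ such that for every triple of functions $\phi, \psi, \xi \in \mathbb{C}[G]$ supported respectively on the spheres of radii $n, m, k$ (in the ordinary word length $|\cdot|_S$), one has the trilinear estimate
$$|\langle \phi * \psi, \xi \rangle| \,\,\leq\,\, c\,(1+\min(n,m,k))^s\,\|\phi\|_2\,\|\psi\|_2\,\|\xi\|_2\,.$$
A triple $(g, h, (gh)^{-1})$ contributing to this pairing corresponds to a geodesic triangle in $\Gamma_S(G)$ with vertices $1, g, gh$, and my goal is to control such triangles using the thin-and-fine geometry of $\Gamma^{\mathcal H}_S(G)$.

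The key step is to decompose each pair $(g, h)$ according to how the corresponding triangle sits inside the coned graph. Each of $g$ and $h$ factors as an alternating word in generators from $S$ and elements of cosets of the $H_j$, of coned length $|g|_{S, \mathcal H}$ and $|h|_{S, \mathcal H}$ respectively. The $\delta$-thinness of the coned geodesic triangle with vertices $1, g, gh$ lets me match each maximal "parabolic piece" of $g$ or $h$ (i.e., a subword lying in a single $H_j$-coset) into one of three types: a piece that cancels against a corresponding piece of the other factor inside some single coset $g_0 H_j$, a piece that survives into $gh$, or a bounded-size "center" contribution. Fineness is then used to bound, by a polynomial in the coned length, the number of possible combinatorial "skeletons" of such triangles passing through any fixed edge of the coned graph — this is the relatively hyperbolic analogue of the polynomially-many-geodesics-at-bounded-distance input that drives the hyperbolic proof.

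Having organized the sum $\sum_{g} \phi(g)\psi(g^{-1}x)\overline{\xi(x)}$ by skeleton, the contribution of each skeleton factors: a "hyperbolic" contribution along the portions of the triangle that traverse non-coned edges, to which a Cauchy--Schwarz estimate (exactly as in the hyperbolic case) applies; and one "parabolic" contribution per matched pair of cancelling subwords $u \in H_j$ and $v \in H_j$, which is controlled by the hypothesis that $H_j$ has property (RD). The latter supplies, for each $H_j$, a constant $c_j$ and exponent $s_j$ bounding convolution norms on $\mathbb{C}[H_j]$ by the weighted $\ell^2$-norm in $H_j$'s own word length, which in turn is dominated by $|\cdot|_S$ restricted to $H_j$ because $H_j$ is finitely generated. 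Summing over skeletons, with the polynomial count from fineness and the polynomial weights from the $H_j$-level (RD) estimates, yields a single polynomial bound with exponent $s = \max_j s_j + (\text{fineness exponent}) + O(1)$.

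The hard part will be the combinatorial bookkeeping in the middle step: one must package fineness into a precise polynomially-bounded count of skeletons of a given coned length through a fixed vertex or edge, and ensure that passing from the coned length $|\cdot|_{S, \mathcal H}$ back to the ordinary word length $|\cdot|_S$ does not blow up the exponent. A secondary difficulty is handling the degenerate case where a parabolic piece of $g$ is not fully cancelled by one of $h$, so that the "cancellation" happens only in a proper subword of a coset element; this is where the hypothesis that each $H_j$ has (RD), rather than a mere length bound, is used essentially, since one needs to split an $H_j$-element into a product $u = u_1 u_2$ and control the resulting trilinear sum inside $H_j$ by its own (RD) estimate.
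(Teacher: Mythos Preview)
The paper does not prove this theorem; it is quoted as a black box from Drutu--Sapir \cite{DrutuRD} and used only in the proof of Corollary~\ref{RD}. There is therefore no ``paper's own proof'' to compare your attempt against.

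As a standalone sketch of the Drutu--Sapir argument, your outline has the right architecture: decompose geodesic triangles into peripheral pieces (inside cosets of the $H_j$) and transversal pieces, apply the (RD) hypothesis on each $H_j$ to control the peripheral convolutions, and handle the transversal part by the hyperbolic Cauchy--Schwarz count. Where your plan is thin is the sentence ``fineness is then used to bound, by a polynomial in the coned length, the number of possible combinatorial skeletons.'' Fineness by itself is an edge-by-edge finiteness statement and does not obviously yield the quantitative control you need; what is actually required is the Bounded Coset Penetration property (equivalently, in Drutu--Sapir's language, the tree-graded structure of asymptotic cones), which guarantees that two word-geodesics with nearby endpoints penetrate the same peripheral cosets up to bounded error and travel comparable distances inside each one. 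This is the technical core of \cite{DrutuRD}, and once you invoke it your outline becomes essentially their proof. The degenerate case you flag at the end --- a peripheral piece of $g$ only partially cancelling against one of $h$ --- is exactly where BCP is doing the work, since it bounds the discrepancy between the entry/exit points of the two geodesics in a common coset.
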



\begin{proof}[Proof of Corollary \ref{RD}]
We first consider the special case where $\alpha \in \Aut{\F{n}}$ is an automorphism of
polynomial growth: in this case one deduces directly that the map $\theta \colon \mz \rightarrow
\Aut{\F{n}}$, defined by $\theta(t) = \alpha^t$, has polynomial
amplitude.
Thus it follows from Proposition \ref{resultat jolissaint} that $\F{n} \rtimes_\alpha \mz$
has property (RD).

We now consider an arbitrary automorphism $\alpha \in \Aut{\F{n}}$.
From part (2) of Theorem \ref{MainTheorem}
we know that $\F{n}
\rtimes_\alpha \mz$ is strongly hyperbolic, relative to the canonical
family $\mathcal H_\alpha$ of
mapping torus subgroups
$\mathcal H_j$
over
subgroups $H_j \subset \FN$ where
the restriction of
$\alpha$ has polynomial growth
(see
\S \ref{polynomialgrowth}). By the above argument, each one of
$\mathcal H_j$
has property (RD). Thus we can conclude from Theorem \ref{resultat Drutu} that
$\F{n} \rtimes_\alpha \mz$
as well must have property (RD).
\end{proof}


\bibliographystyle{alpha}



\newpage
\section
{Appendix to
``The mapping torus
group
of a free group automorphism is
hyperbolic relative to the canonical subgroups of polynomial
growth''}

\centerline{\large  by  Martin Lustig}

\bigskip
\medskip

 %
 %
 %
 %
 %

\maketitle

Bestvina-Handel have proved in \cite{BH} that every automorphism of $\FN$ can be represented by a relative train track map $f: \Gamma \to \Gamma$,
recalled below in subsection \ref{rel-tt-to-beta-tt}.
The goal of this appendix is to
explain
how one can derive from such a relative train track map a $\beta$-train track map as defined in section 4.
We give here in a detailed and careful manner all ingredients needed in this construction,
and we sketch the proofs.
A fully expanded version of this appendix will be given in the
forthcoming paper \cite{Lu-better-tts}.

\subsection{Partial train track maps and Nielsen faces}
\label{partial-tt-maps}

A {\em graph-of-spaces $\GG$ relative to $X$} consists of a finite collection $X$ of pathwise connected {\em vertex spaces} $X_v$, and a finite collection $\hat \Gamma$ of edges with endpoints in $X$.  We call $X$ the {\em relative part} of $\GG$, and the edges in $\hat \Gamma$ are referred to as {\em edges of $\GG$} (by which we exclude possible edges in $X$ !).
A subpath $\gamma_0$ of a path $\gamma$ in $\GG$ is called
{\em backtracking}
if the endpoints of $\gamma_0$ coincide, and if the resulting loop
is contractible in $\GG$.
A path $\gamma$ in $\GG$ is called {\em reduced} (rel. $X$) if every backtracking subpath $\gamma_0$ of $\gamma$ is contained in $X$.


\begin{definition}
\label{partial-tr-maps}
Let $\GG$ be a graph-of-spaces with vertex space collection $X \subset \GG$, and
let $f: \GG \to \GG$ be a continuous map
with $f(X) \subset X$.
\begin{enumerate}
\item[(1)]
A path $\gamma$ in $\GG$ is called
{\em legal} (with respect to $f$) if for every $t \geq 1$
the path $f^t(\gamma)$ is reduced.
%
\item[(2)]
The map $f: \GG \to \GG$ is a {\em partial train track map relative to $X$} if every edge of $\GG$ is legal. In this case the collection $\hat \Gamma$ of edges in $\GG$ is
called the {\em train track part} of $\GG$.
\item[(3)]
The map $f$ is called {\em expanding} if for every edge $e \in \hat \Gamma$ some iterate $f^t(e)$
runs over 2 or more edges from $\hat \Gamma$.
\end{enumerate}
\end{definition}


A path $\eta$ in $\GG$ is called an {\em indivisible Nielsen path (INP)}
if $f(\eta)$ is homotopic rel. endpoints to $\eta$, and if $\eta$ is a concatenation $\eta = \gamma \circ \gamma'$ of two legal paths $\gamma$ and $\gamma'$ which are not contained in $X$. Note that
$\eta$ can not be legal,
 if $f$ is expanding. Note also that the endpoints of $\eta$ may be situated in the interior of an edge of $\hat \Gamma$.
A path $\eta$ is called {\em periodic indivisible Nielsen path (periodic INP)} if $\eta$ is an INP for some positive iterate $f^t$ of $f$.
We do not distinguish between periodic INP's that are homotopic via a homotopy
(not necessarily fixing endpoints)
that takes entirely place in the relative part of $\GG$.


\begin{definition}
\label{pseudo-legal}
Let $f: \GG \to \GG$ be a partial train track map relative to
the collection $X$
of vertex spaces of a graph-of-spaces
$\GG$.
\begin{enumerate}
\item[(a)]
A concatenation $\gamma \circ \gamma'$ of two path $\gamma$ and $\gamma'$ in $\GG$ is called
{\em legal concatenation} if
there exists a terminal subpath $\gamma_0$ of $\gamma$ and an initial subpath $\gamma'_0$ of $\gamma'$,
both not
entirely contained in $X$, such that the concatenation $\gamma_0 \circ \gamma'_0$ is legal.
\item[(b)]
A path $\gamma$ in $\GG$ is called {\em pseudo-legal} if $\gamma$ is a legal concatenation of legal paths and periodic INP's.
\end{enumerate}
\end{definition}


It is known that every INP of $f: \GG \to \GG$ defines a branch point orbit in an $\R$-tree with isometric $\pi_1 \GG$-action, which can be obtained from the partial train track via a row-eigen-vector of the geometric transition matrix of $f$ (compare \S 4 of \cite{lu2} and the references given there). If $\pi_1\GG$ is a free group $\FN$ of finite rank $n$, then the number of such branchpoints and their ``multiplicity'' is bounded in terms of $n$, see
\cite{gl}.  As a consequence, one obtains:

\begin{proposition}
\label{finitely-many-INP's}
For any expanding partial train track map $f: \GG \to \GG$, with finitely generated free group $\pi_1 \GG$, there are only finitely many periodic INP's in $\GG$.
\end{proposition}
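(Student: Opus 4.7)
The proof plan rests on the link, already alluded to in the excerpt, between periodic INPs of a partial train track map and branch points in an associated $\R$-tree with isometric $\FN$-action. The strategy is to extract from $f: \GG \to \GG$ a very small $\FN$-action on an $\R$-tree $T$ in which each periodic INP contributes either a distinct branch orbit or at least a separate "direction" at a branch orbit, and then to invoke the rank bound of Gaboriau--Levitt \cite{gl} (and its sharpening in \cite{gjll}) for the total branching complexity of such actions.

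First, I would pass to a power $f^m$ so that every periodic INP becomes an honest INP; since this only rescales the INP count by a factor depending on $m$, finiteness for $f^m$ implies finiteness for $f$. I would then assign lengths to the edges of $\hat \Gamma$ using the left Perron--Frobenius row-eigenvector of the transition matrix of $f^m$ restricted to the top exponentially growing stratum (extending iteratively downwards through the strata, and assigning length $0$ to edges whose stratum is polynomial or lies in $X$). Lifting this pseudo-metric to the universal cover $\tilde \GG$ and collapsing distance-$0$ subsets yields an $\R$-tree $T$ with an isometric $\FN$-action which is very small, as explained in \S 4 of \cite{lu2}.

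Second, I would show that each periodic INP $\eta$ in $\GG$, with tip at the unique illegal turn of $\eta$, gives rise to a well-defined branch point of $T$: the two legal branches $\eta'$ and $\eta''$ lift (starting at any chosen preimage of the tip) to two distinct non-degenerate arcs in $T$ whose initial directions are different, so the image of the tip is a branch point whose $\FN$-orbit depends only on the homotopy class of $\eta$ modulo deformation inside the relative part (which collapses to a point in $T$). Two periodic INPs that are not identified by the equivalence in the excerpt produce either distinct branch orbits or distinct pairs of directions at the same branch orbit. A careful bookkeeping of directions is needed when the tip of $\eta$ sits in the interior of an edge of $\hat \Gamma$ or is incident to a collapsed vertex space; this is the most delicate point of the argument.

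Third, I would apply the Gaboriau--Levitt bound: for a very small isometric action of $\FN$ on an $\R$-tree the sum of indices over branch point orbits is bounded by a constant depending only on $n$. Combined with step two this gives an explicit bound on the number of equivalence classes of INPs of $f^m$, and hence on the periodic INPs of $f$. The main obstacle is step two, namely verifying that distinct periodic INPs (up to the relative-part homotopy relation) really do yield distinct contributions to the branching data of $T$, and that the construction of $T$ from the row-eigenvector is compatible with the expansion and legality properties of $f$ so that periodic INPs map to genuine branch points rather than regular points. Once this is in hand, the finiteness conclusion follows immediately.
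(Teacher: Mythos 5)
Your proposal follows essentially the same route as the paper: the paper's argument (the paragraph preceding the proposition) likewise constructs the $\R$-tree from a row-eigenvector of the transition matrix, observes that each (periodic) INP yields a branch point orbit, and invokes the Gaboriau--Levitt bound \cite{gl} on branching for very small $\FN$-actions. Your version simply spells out more of the bookkeeping (passing to a power, the compatibility of the eigenvector metric with legality, directions at branch points), which the paper leaves implicit by citing \cite{lu2} and \cite{gl}.
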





The following proposition has been shown in \S 3 of \cite{Lu1}.

\begin{proposition}
\label{eventuallypseudolegal}
(a) Let $\GG$ be a graph-of-spaces with vertex space collection $X$, and let $f: \GG \to \GG$ be an expanding partial train track map relative to $X$.
Then for every path $\gamma$ in $\GG$ there is an exponent $t(\gamma) \geq 1$ such that $f^{t(\gamma)}(\gamma)$ is homotopic rel. endpoints to a pseudo-legal path.

\smallskip
\noindent
(b)  There is an upper bound to the exponent $t(\gamma)$, which depends only on the number $q$ of factors in any decomposition of $\gamma = \gamma_1 \circ \ldots \circ \gamma_q$ as concatenation of legal paths $\gamma_i$, and not on the particular choice of $\gamma$ itself.

\end{proposition}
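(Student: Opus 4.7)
The strategy is to decompose $\gamma$ into maximal legal subpaths $\gamma = \gamma_1 \circ \ldots \circ \gamma_q$ and then track what happens at each of the $q-1$ illegal turns under iteration of $f$. Two inputs will be crucial: that legal paths remain legal under $f$ (immediate from Definition \ref{partial-tr-maps}(1)), so that each $f^t(\gamma_i)$ is legal and the only obstructions to $f^t(\gamma)$ being pseudo-legal are local to the concatenation points; and that by Proposition \ref{finitely-many-INP's} there are only finitely many periodic INPs in $\GG$.

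I would first carry out the local analysis at a single illegal turn between $\gamma_i$ and $\gamma_{i+1}$. Since the $f^t(\gamma_i)$ are each legal, any failure of reducedness (rel.\ $X$) in $f^t(\gamma_i \circ \gamma_{i+1})$ is concentrated in a window around the common endpoint. Set up a bounded-cancellation lemma adapted to the partial train-track setting: there exists a constant $C = C(f)$ such that under reduction rel.\ $X$, the amount of $f(\alpha)$ and $f(\beta)$ canceled near the concatenation of two legal paths $\alpha$ and $\beta$ is bounded in terms of what was canceled at the previous stage, plus $C$. Iterating this, two mutually exclusive possibilities arise: either the cancellation window eventually stops growing and the remainder is a legal concatenation (the turn "resolves"), or the central window stabilizes into a fixed pattern which, because $f$ is expanding and acts on this pattern by homotopy rel.\ endpoints, must be a periodic INP. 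In the second case one obtains a decomposition of $f^t(\gamma_i \circ \gamma_{i+1})$ as a legal concatenation of a legal terminal segment of $f^t(\gamma_i)$, a periodic INP $\eta$, and a legal initial segment of $f^t(\gamma_{i+1})$, i.e.\ precisely a pseudo-legal path in the sense of Definition \ref{pseudo-legal}.

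For the global statement, I would apply the local analysis at each of the $q-1$ turns, taking some care with the order in which turns are treated so that the cancellation windows from adjacent turns, which may eventually overlap, produce a consistent global pseudo-legal decomposition. For the uniform bound in part (b), the essential point is that the number of iterations needed to either resolve or stabilize any single illegal turn depends only on (i) the finite collection of possible "turn germs" at a turn point (initial/terminal segments of legal paths based at vertices of $\GG^1$) and (ii) the finite set of periodic INPs given by Proposition \ref{finitely-many-INP's}. A pigeonhole argument over these finite data yields a uniform bound $T_0 = T_0(f)$ per turn; combining over at most $q-1$ turns bounds $t(\gamma)$ by a function of $q$ and $f$ alone, independent of $\gamma$.

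The main obstacle will be formalizing the dichotomy at a single illegal turn rigorously. Specifically, one must show that the stabilized central configuration in the non-resolving case genuinely is a periodic INP as defined here, rather than some more general object. This requires carefully tracking how Nielsen faces and the relative part $X$ interact with train-track reduction, and invoking a pigeonhole/periodicity argument on the finite set of possible stabilized patterns — which is exactly where Proposition \ref{finitely-many-INP's} and the finiteness of turn germs enter decisively.
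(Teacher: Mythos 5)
The paper does not actually prove this proposition: the text before its statement reads ``The following proposition has been shown in \S 3 of \cite{Lu1}'', so the authors defer entirely to an external reference and there is no in-paper proof to compare against. I can therefore only assess your proposal on its own merits.

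Your overall strategy is the standard and correct one: decompose $\gamma$ into maximal legal subpaths, track the illegal turns under iteration of $f$ followed by tightening, establish a local dichotomy (the turn resolves, or it stabilizes at the tip of a periodic INP), and then assemble the global pseudo-legal decomposition. You also correctly identify the three key inputs --- bounded cancellation, expansiveness, and the finiteness of periodic INPs (Proposition \ref{finitely-many-INP's}) --- and you honestly flag the hardest step, namely showing the stabilized central configuration is genuinely a periodic INP (as defined, a homotopy-fixed concatenation of two legal paths) rather than some other limiting object.

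There is, however, a real gap in your argument for part (b). You propose a ``pigeonhole argument over these finite data'' where the finite data are (i) ``turn germs'' and (ii) the finitely many periodic INPs. But a turn in this setting is a path $e \circ \chi \circ e'$ with $\chi$ a reduced path in the relative part $X$, and the collection of such turns is not finite: $\chi$ can be arbitrarily long, and the germs $e, e'$ at the two vertices do not determine $\chi$. So pigeonhole over germs alone cannot bound the number of iterations needed to stabilize a single turn, and the claimed uniform constant $T_0(f)$ is not established by the argument as written. The correct route to uniformity is quantitative rather than combinatorial: since $f$ is expanding, the lengths of legal subpaths grow at a uniform exponential rate under $f$, while the cancellation around each illegal turn is controlled by a single bounded-cancellation constant $C(f)$. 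After a number of iterates depending only on $C(f)$ and the expansion rate (hence only on $f$), the cancellation window around each surviving turn is forced into a configuration of uniformly bounded size, and only then can one invoke Proposition \ref{finitely-many-INP's} to identify the surviving stable configurations with the finitely many periodic INPs. This distinction matters, because it is exactly the growth-versus-bounded-cancellation tension --- not a count of turn germs --- that makes the bound independent of $\gamma$. Your sketch conflates these and leaves part (b) unsupported. A related, smaller point: you should also verify that the stabilized configuration produces a \emph{legal concatenation} in the sense of Definition \ref{pseudo-legal}(a), i.e.\ that the terminal segment of $f^t(\gamma_i)$, the INP, and the initial segment of $f^t(\gamma_{i+1})$ meet legally; this follows from the local analysis but is part of what must be checked, since pseudo-legality is not just any concatenation of legal paths and INPs.
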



\begin{definition}
\label{Nielsen-faces}
Let $f: \GG \to \GG$ be an expanding partial train track map relative to
the vertex space collection $X$ of a graph-of-spaces $\GG$.
\begin{enumerate}
\item[(a)]
Let $\eta$ be an INP of $f$.
Then gluing an {\em auxiliary edge} $e$ along $\partial e = \partial \eta$ to $\GG$, and simultaneously a {\em Nielsen face}, i.e. a 2-cell $\Delta^2$, along the boundary path $\partial \Delta^2 = \eta^{-1} \circ e$,
is called {\em expanding a Nielsen face at the INP $\eta$}. One extends the map $f$ by the identity on $e$ and by mapping $\Delta^2$ to $\Delta^2 \cup f(\eta)$, to obtain a {\em partial train track map with Nielsen faces $f: \GG^2 \to \GG^2$ relative to $\hat X$}, for the resulting space $\GG^2 = \GG \cup e \cup \Delta^2$ and $\hat X = X \cup e$.
\item[(b)]
Similarly one defines the expansion of an $f$-orbit of Nielsen faces at the $f$-orbit of a periodic INP.
It is also possible
to expand Nielsen faces at several periodic INP's simultaneously.
\item[(c)]
Every Nielsen face $\Delta^2$, expanded together with an auxiliary edge $e$ at some (periodic) INP $\eta$, defines a homotopy which deforms a path that runs over $e$ to a path which runs over $\eta$ instead. Thus the collection of Nielsen faces that have been expanded at (periodic) INP's of $\GG$ defines a strong deformation retraction $\hat r: \GG^2 \to \GG$, which maps every auxiliary edge $e_i$ to the corresponding (periodic) INP $\eta_i$ and leaves every point of $\GG$ fixed.
\item[(d)]
A path $\gamma$ in $\GG^1 = \hat \Gamma \cup \hat X \subset \GG^2$ is called {\em strongly reduced} if $\hat r(\gamma)$ is reduced in $\GG$ (relative to $X$). It follows that in this case $\gamma$ is also reduced, as path in $\GG^1$ relative to $\hat X$, but also as path in $\GG^2$ relative to $\hat X$.  (The subtle difference here is caused by the above definition of a ``backtracking subpath": a subpath of $\gamma$ may well be backtracking in $\GG^2$ but not in $\GG^1$ !)
\item[(e)]
A path $\gamma$ in $\GG^1 \subset \GG^2$ is called {\em strongly legal} if for any $t \geq 1$ the path $f^t(\gamma)$ is strongly reduced. Note that every strongly legal path is legal.
\item[(f)]
The train track map $f: \GG^2 \to \GG^2$ is said to be {\em strong} if every edge of the train track part $\hat \Gamma \subset \GG^2$ is strongly legal.
\end{enumerate}
\end{definition}

\begin{remark}
\label{pseudo-becomes-legal}
Let $f: \GG \to \GG$ be a partial train track map relative to a collection $X \subset \GG$ of vertex spaces, and let $\hat f: \GG^2 \to \GG^2$ be obtained from $f$ and $\GG$ by expanding Nielsen faces at finitely many (periodic) INP's.
%
\begin{enumerate}
\item[(a)]
Then the restriction $f_1: \GG^1 \to \GG^1$ of $\hat f$ to the union $\GG^1$ of $\GG$ with all added auxiliary edges is a partial train track map (without Nielsen faces !) relative to $\hat X$, where the fundamental group $\pi_1 \GG^1$ has been increased with respect to $\pi_1 \GG$ through adding the auxiliary edges to the relative part $X$ to get $\hat X$.

\item[(b)]
If at every periodic INP in $\GG$ a Nielsen face has been expanded to obtain $\GG^2$, then every pseudo-legal path in $\GG$ is homotopic in $\GG^2$ to a strongly legal path.
\end{enumerate}
\end{remark}

For notational purposes we now extend the above introduced notation slightly. Note however that all 2-cells attached to partial train tracks in our context will be Nielsen faces, but in a iterated fashion which
for notational convenience we prefer to suppress.

\begin{definition}
\label{strong-tts}
Let $\GG^1 = \hat \Gamma \cup \hat X$ be a graph-of-spaces relative to $\hat X$, and let $\GG^2$ be obtained from $\GG^1$ by attaching finitely many 2-cells along their boundary to $\GG^1$.  We call $\GG^2$ a {\em graph-of-spaces with 2-cells}.

For some subspace $X \subset \hat X$, which contains all endpoints of edges of $\hat \Gamma$, let $\GG = \hat \Gamma \cup X$ be a graph-of-spaces (without 2-cells !) relative to $X$, and assume that $r: \GG^2 \to \GG$ is a strong deformation retraction.

A map $f: \GG^2 \to \GG^2$ is a {\em strong train track map with 2-cells, relative to $\hat X$}, if
\begin{enumerate}
\item[(a)]
$f(\GG^1) \subset \GG^1$ and $f(\hat X) \subset \hat X$,
\item[(b)]
the induced map $f_1: \GG^1 \to \GG^1$ is an expanding train track map relative to $\hat X$, and
\item[(c)]
every edge $e \in \hat \Gamma$ is {\em strongly legal} with respect to $r\,$:  For any $t \geq 1$ the path $r f_1^t(e)$ is reduced (rel. $X$) in $\GG$.
\end{enumerate}
\end{definition}

\subsection{Building up strong train tracks}

\bigskip

The following lemma is an important tool in the proof of our main result presented in subsection \ref{rel-tt-to-beta-tt}.
A proof appeared already in
\cite{Lu2} (at the end of \S 6),
modulo a minor switch in the terminology employed.
%

\begin{lemma}
\label{one-edge}
Let $f: \GG^2 \to \GG^2$ be a strong partial train track map of a graph-of-spaces $\GG^2$ with 2-cells, relative to a subspace $X \subset \GG^2$.
\begin{enumerate}
\item[(a)]
Let $\GG^2_1$ and $f_1: \GG^2_1 \to \GG^2_1$ be obtained from $\GG^2$ by attaching a further edge $e$ at its endpoints to points $P_0, P_1 \in X$, and by extending $f$ via $f_1(e) = \gamma_0 \circ e \circ \gamma_1$, where the $\gamma_i$ are strongly legal paths in $\GG$. Then one can homotope in $\GG^2$ the attaching points $P_0$ and $P_1$ to points $P'_0$ and $P'_1$, and replace $\gamma_0$ and $\gamma_1$ correspondingly by homotopic paths $\gamma'_0$ and $\gamma'_1$, so that $e$ becomes strongly legal.

In particular, the resulting map $f'_1: \GG'^2_1 \to \GG'^2_1$ is a strong partial train track map with 2-cells, relative to $X$, provided that at least one of the paths $\gamma'_i$ is not entirely contained in $X$
(in order to ensure that $f'_1$ is expanding).
\item[(b)]
The analogous statement is true if $\GG^2_1$ is constructed from $\GG^2$ by attaching finitely many edges $e^j$ to $X$, with $f_1(e^j) = \gamma_0^j \circ e^{\pi(j)} \circ \gamma_1^j$ for some permutation $\pi$.
\item[(c)]
The above given homotopies lead canonically to a homotopy equivalence $h: \GG^2_1 \to \GG'^2_1$, which restricts on $\GG^2$ to a selfmap that is homotopic to the identity and satisfies $h(X) \subset X$, such that $f'_1 h$ and $h f_1$ are homotopic.
\end{enumerate}
\end{lemma}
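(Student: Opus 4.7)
The plan is to analyze the iterated image $f_1^t(e)$ and reduce the strong legality of $e$ to a finite set of gate-compatibility conditions at the attaching points and their $f_1$-iterates. From $f_1(e) = \gamma_0 \circ e \circ \gamma_1$ one obtains inductively
$$f_1^t(e) = f_1^{t-1}(\gamma_0) \circ \cdots \circ \gamma_0 \circ e \circ \gamma_1 \circ \cdots \circ f_1^{t-1}(\gamma_1).$$
Since each $\gamma_i$ is strongly legal in $\GG$, each factor $rf_1^k(\gamma_i)$ is already reduced in $\GG$, so the only possible failure of reducedness of $rf_1^t(e)$ lies at the concatenation points, which lie in the $f_1$-orbits of $P_0$ and $P_1$. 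The strong legality of $e$ is thus equivalent to: the turn at $P_0$ between $r(\gamma_0)$ and $e$ is legal in $\GG$, the symmetric condition at $P_1$, and for every $k \geq 1$ the turn at $f_1^{k}(P_0)$ between the terminal gate of $rf_1^{k}(\gamma_0)$ and the initial gate of $rf_1^{k-1}(\gamma_0)$ is legal in $\GG$, with the symmetric condition at $P_1$.

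Since $\GG^2$ has finitely many vertices and the combinatorial action of $f_1$ on the finite set of gates is eventually periodic, the infinite list of conditions above reduces to finitely many. We choose a slide of $P_0$ to a nearby point $P'_0$ via a path $\alpha$ in $\GG^2$ and replace $\gamma_0$ by the correspondingly adjusted path $\gamma'_0$ (a representative of the homotopy class of $f_1(\alpha)^{-1} \cdot \gamma_0 \cdot \alpha$, further homotoped in $\GG^2$ to remain strongly legal in $\GG$), in such a way that the terminal gate of $r(\gamma'_0)$ at $P'_0$ differs from the gate by which $e$ leaves $P'_0$. Using the strong train track property of $f$ from Definition \ref{strong-tts}(c), which ensures that legal turns are mapped to legal turns by $f$, a sufficiently well-chosen slide propagates the initial gate-compatibility through all iterates. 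The symmetric slide is performed at $P_1$, producing $P'_1$ and $\gamma'_1$, and the new map $f'_1$ is defined by $f'_1(e) = \gamma'_0 \circ e \circ \gamma'_1$.

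For (b), one applies the argument of (a) at each attaching point of each edge $e^j$ simultaneously. Since $f_1(e^j)$ involves the single edge $e^{\pi(j)}$, the $f_1$-iterates of the attaching points of $e^j$ involve only the attaching data of the edges in the $\pi$-orbit of $j$; the slides at different attaching points are independent, and the compatibility conditions along each $\pi$-orbit remain finite, so the simultaneous construction goes through. For (c), the homotopy equivalence $h$ is built directly from the slide homotopies: $h$ is the identity on $\GG^2$ away from small neighborhoods of the attaching points, and near them it realizes the slides $P_j \leadsto P'_j$. Since the slides are performed in $\GG^2$ and can be taken inside $X$, the restriction of $h$ to $\GG^2$ is homotopic to the identity and preserves $X$; the relation $f'_1 \, h \simeq h \, f_1$ follows directly from the definition of the $\gamma'_j$ in terms of the slides.

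The main obstacle is the propagation step in (a): verifying that a single local slide of $P_0$ simultaneously resolves all the gate conditions at every $f_1^k(P_0)$, not just at $P_0$ itself. This is precisely where the strongness of the train track map $f$ is used essentially, giving us control on the $f$-induced action on the gates at vertices of $X$, and where the eventual periodicity of the orbit of the attaching point reduces the problem to a finite combinatorial check. The detailed bookkeeping, carried out along the lines of \S 6 of \cite{Lu2}, completes the argument.
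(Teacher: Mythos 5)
Your framework is reasonable and correctly locates the crux: strong legality of $e$ reduces to turn conditions at the junction points of $f_1^t(e)$, which lie on the forward orbits of $P_0$ and $P_1$, and these conditions are eventually periodic. You are also right that once the single turn at $f(P'_0)$ between the end of $f(\gamma'_0)$ and the beginning of $\gamma'_0$ is (strongly) legal, the strong train track property propagates legality to all later iterates $f^k(P'_0)$. However, I cannot verify the step you yourself flag as the main obstacle, and I believe the gap is genuine.

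The issue is that you never establish the simultaneous solvability of the two requirements: (i) the new turn at $f(P'_0)$ is legal, and (ii) the slid path $\gamma'_0$ remains strongly legal. Consider the most natural slide, namely along a terminal segment $\tau$ of $\gamma_0$, so that $\gamma_0 = \gamma_0'' \circ \tau$, $P'_0$ is the start of $\tau$, and $\gamma'_0 = f(\tau) \circ \gamma_0''$ (up to reduction). One then checks that the turn inside $\gamma'_0$ at the concatenation point $f(P_0)$ is $\{Df(\overline{\text{term dir of }\gamma_0}), \text{init dir of }\gamma_0\}$ --- precisely the original problematic turn. So the slide has merely relocated the bad turn into the interior of $\gamma'_0$, which therefore fails to be strongly legal. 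Your remark that $\gamma'_0$ is ``further homotoped in $\GG^2$ to remain strongly legal in $\GG$'' does not repair this: homotoping rel endpoints can change the initial and terminal directions of $\gamma'_0$, so it is not clear that the gate condition at $f(P'_0)$ survives, and a path with a periodic illegal turn of INP type cannot be made strongly legal at all by a homotopy rel endpoints. Some additional mechanism --- presumably a stabilization argument via bounded cancellation, showing that the cancellation at the junction terminates and one slides precisely to the stabilized endpoint --- is required, and it is exactly this that your sketch leaves out. Note also that the hypothesis you cite, Definition \ref{strong-tts}(c), says that the edges of $\hat\Gamma$ are strongly legal; the fact that legal turns map to legal turns is a separate (standard) consequence of the train track condition, so invoking ``strongness'' for the propagation needs a small correction.

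For context: the paper itself does not contain a proof of Lemma \ref{one-edge}; it refers the reader to the end of \S 6 of \cite{Lu2}, exactly as you do at the end of your argument. So there is no paper-internal proof to compare against, but your proposal still stops short of a self-contained argument on the one step where the work actually lies. Parts (b) and (c) are fine as far as they go, being direct extensions of (a) along $\pi$-orbits and via the slide homotopies respectively.
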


\begin{definition}
\label{initial-segments}
A partial train track map $f: \GG \to \GG$ relative $X$ satisfies the {\em initial-segments condition} if for every edge $e$ of the train track part of $\GG$ some initial and some terminal segment of $e$ are mapped by $f$ onto an edge of the train track part of $\GG$.
\end{definition}

The following lemma can be derived in a direct manner from the above definitions.

\begin{lemma}
\label{exponential-stratum}
Let $f: \GG \to \GG$ be
a partial train track map of a graph-of-spaces $\GG$ relative to the vertex space collection $X$ of $\GG$, which satisfies the initial-segments condition, and with the property that all edges of $\GG$ are attached to some subspace $X' \subset X$.

Assume that $X$ is itself a graph-of-spaces $X = \GG'^2$ with 2-cells, relative to $X'$, and assume that the restriction $f'$ of $f$ to $X = \GG'^2$ is
a strong partial train track map with 2-cells, relative to $X'$.

Assume furthermore that for every edge $e$ of $\GG$ any subpath $\gamma$ of $f(e)$ that is entirely contained in $X = \GG'$ is strongly legal.

Then $\GG$ is a graph-of-spaces with 2-cells, relative to $X'$, and $f$ is a strong partial train track map with 2-cells, relative to $X'$.
%
\end{lemma}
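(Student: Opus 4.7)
The plan is to verify, one by one, the conditions of Definition \ref{strong-tts} for a ``combined'' structure on $\GG$. I will declare the new train track part to be $\hat\Gamma_{\mathrm{new}} := \hat\Gamma \sqcup \hat\Gamma'$ (merging the edges of $\GG$ with the train track edges of $X = \GG'^2$), the new 1-skeleton to be $\GG^1_{\mathrm{new}} := \hat\Gamma \cup \GG'^1$, inherit the 2-cells from $\GG'^2$, and extend the retraction $r' \colon \GG'^2 \to \GG'^1$ by the identity on $\hat\Gamma$ to a retraction $r_{\mathrm{new}} \colon \GG \to \GG^1_{\mathrm{new}}$. The first easy check is that $f$ preserves $\GG^1_{\mathrm{new}}$: on $X = \GG'^2$ this holds because $f' = f|_X$ is already a strong partial train track map with 2-cells; and on an edge $e \in \hat\Gamma$ the hypothesis that $X$-subpaths of $f(e)$ are strongly legal guarantees that those subpaths avoid the 2-cells and stay in $\GG'^1$. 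The expansion of every edge of $\hat\Gamma_{\mathrm{new}}$ relative to the smaller base $X'$ is then an immediate consequence of the expansion hypotheses for $f$ relative to $X$ and for $f'$ relative to $X'$.

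The heart of the argument lies in showing strong legality of every edge of $\hat\Gamma_{\mathrm{new}}$ with respect to $r_{\mathrm{new}}$. For $e \in \hat\Gamma'$ this is a direct rewriting: $r_{\mathrm{new}}\, f^t(e) = r' f'^t(e)$ is reduced relative to $X'$ by the strong partial train track assumption on $f'$. For $e \in \hat\Gamma$ I would first argue, by induction on $t \geq 1$, that $f^t(e)$ admits a decomposition as an alternating concatenation $e_1 \circ \beta_1 \circ e_2 \circ \cdots \circ e_k$ in which each $e_j$ is a full edge of $\hat\Gamma$ and each $\beta_j$ is a strongly legal path in $\GG'^2$. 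The induction step combines the initial-segments condition (which guarantees that the $\hat\Gamma$-pieces appearing in $f$-images of $\hat\Gamma$-edges always come with full initial and terminal edges, so neighboring $\beta$-pieces do not merge with them) and the stability of strong legality under $f'$. Applying $r_{\mathrm{new}}$ to this decomposition produces the concatenation $e_1 \circ r'(\beta_1) \circ e_2 \circ \cdots \circ e_k$ in $\GG^1_{\mathrm{new}}$, where each factor $r'(\beta_j)$ is reduced relative to $X'$.

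The delicate remaining task is to rule out a backtracking subpath of this concatenation whose trace is not contained in $X'$. Such a subpath cannot sit entirely inside a single factor $r'(\beta_j)$ (each of which is already reduced relative to $X'$), so it must cross at least one edge $e_j \in \hat\Gamma$. The idea is to lift the hypothetical backtracking subpath to a backtracking subpath of $f^t(e)$ itself, exploiting the fact that $r_{\mathrm{new}}$ acts via a homotopy supported inside $X$ (replacing each $\beta_j$ by the homotopic $r'(\beta_j)$); since contractibility of a loop in $\GG$ is invariant under such a homotopy, the lifted loop is still backtracking in $\GG$, yet still crosses the edge $e_j \notin X$, contradicting the legality of $e$ relative to $X$ (the original train track hypothesis on $f$). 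I expect this lifting step to be the main technical obstacle, particularly when the endpoints of the would-be backtracking subpath lie in the interior of some $r'(\beta_j)$; there one first extends the subpath to the nearest junction points in $X'$ without altering the free homotopy class of the resulting loop, thereby reducing to the case of junction-point endpoints which admit a direct lift. Once this is established, all the defining conditions of Definition \ref{strong-tts} are in place, proving the lemma.
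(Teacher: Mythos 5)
Your proposal has the right overall architecture (verify the three conditions of Definition~\ref{strong-tts} for the merged train track part $\hat\Gamma \cup \hat\Gamma'$, the inherited 2-cells, and the extended retraction), and the paper itself offers no proof to compare against, asserting only that the statement ``can be derived in a direct manner from the above definitions.'' The inductive decomposition $f^t(e) = e_1 \circ \beta_1 \circ e_2 \circ \cdots \circ e_k$ with $e_j$ a full $\hat\Gamma$-edge and $\beta_j$ strongly legal in $\GG'^2$ is correct and for the right reason: the initial-segments condition prevents leftover partial $\hat\Gamma$-edges from merging adjacent $X$-subpaths, and $f'$ preserves strong legality.

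However, the step you yourself flag as delicate contains a genuine gap. When the coinciding endpoints $p=q$ of the hypothetical backtracking subpath $\gamma_0$ of $r_{\mathrm{new}}(f^t(e))$ lie in the interior of some $r'(\beta_j)$, your remedy --- ``extend the subpath to the nearest junction points in $X'$ without altering the free homotopy class of the resulting loop'' --- does not work as stated: extending at both ends to junction points of two \emph{different} pieces $r'(\beta_{j_0})$ and $r'(\beta_{j_1})$ produces a path whose new endpoints no longer coincide, so it is not a loop at all, and there is no meaningful free homotopy class to preserve. Moreover, the preimage points of $p$ in $\beta_{j_0}$ and in $\beta_{j_1}$ under the retraction homotopy are in general distinct, so the naive ``lift'' of $\gamma_0$ into $f^t(e)$ is not a closed loop either. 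A cleaner route avoids lifting arbitrary backtracking loops altogether: since the edge groups of the graph-of-spaces are trivial, a path is reduced rel.\ $X'$ if and only if it contains no cancelling pair, i.e.\ no consecutive occurrence $a \circ \chi \circ \bar a$ with $a$ an edge of $\hat\Gamma_{\mathrm{new}}$ and $\chi$ a contractible loop in $X'$. In $r_{\mathrm{new}}(f^t(e)) = e_1 \circ r'(\beta_1) \circ e_2 \circ \cdots$, a cancelling pair of two edges from $\hat\Gamma'$ must occur inside a single factor $r'(\beta_j)$ (since the factors are separated by $\hat\Gamma$-edges), contradicting the reducedness of $r'(\beta_j)$ rel.\ $X'$; a cancelling pair mixing an edge from $\hat\Gamma'$ with an edge from $\hat\Gamma$ is impossible since these are distinct edges; and a cancelling pair $e_j \circ r'(\beta_j) \circ e_{j+1}$ with $e_{j+1}=\bar e_j$ and $r'(\beta_j)$ a contractible loop in $X'$ forces $\beta_j = r'(\beta_j) \subset X'$ (the retraction $r'$ fixes $\Gamma'$ pointwise, and a path in $\GG'^2$ retracting into $X'$ cannot have crossed any auxiliary edge or $\hat\Gamma'$-edge), whence $e_j \circ \beta_j \circ \bar e_j$ is a backtracking subpath of $f^t(e)$ not contained in $X$, contradicting legality of $e$ rel.\ $X$. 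This removes the problematic interior-endpoint case entirely.

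One further caveat: your expansion claim silently invokes that $f$ itself is expanding relative to $X$, which the lemma's hypotheses do not state explicitly (only $f'$ is assumed expanding, as part of being a strong partial train track map rel.\ $X'$). In the only place the paper uses the lemma (Case~1 of Proposition~\ref{strong-tt-representative}), $f$ is explicitly assumed expanding, so this is harmless --- but it should be flagged, since Definition~\ref{strong-tts}(b) requires the induced map on $\GG^1$ to be expanding and the initial-segments condition alone does not obviously supply this.
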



\subsection{The attaching-iteration method}

In this subsection we consider
pairs
of (not necessarily connected) spaces $X \subset Y$ and
maps
$f: Y \to Y$ which
satisfy
$f(X) \subset X$. Assume that $Y$ is obtained from the disjoint union of $X$ and a space  $Z$ by gluing a subspace $Z_0 \subset Z$ to $X$ via an attaching map $\phi : Z_0 \to X$:
$$Y = Z \cup X / < z = \phi(z) \mid z \in Z_0 >$$

Let $X'$ be an
$f$-invariant union of some of the connected components
of $X$, and let $Z'_0 \subset Z_0$ the subset consisting of those points that are glued via $\phi$ to $X'$.
We can now construct
a new space
in the following
way:  We fix an integer $t \geq 1$.  Then we unglue every
point $z' \in Z'_0$ from $X'$
and reglue it to
$f^t(z')$, to obtain a new space
$$Y_1  = Z \cup X / < z = \phi(z), z' = f^t \phi(z') \mid z \in Z_0 \smallsetminus Z'_0, z' \in Z'_0 > \, .$$
We define a
map
$h:
Y \to Y_1$ which restricts to the identity on the subspace $Z$ as well as on
$X \smallsetminus X'$, and maps every point
$x \in X'$ to the point $f^t(x)$.
It is easy to verify that these definitions are compatible with the gluing maps. We observe:

\begin{remark}
\label{homotopyequivalence}
If the restriction of
$f$ to a self-map of $X'$ is a homotopy equivalence,
then also the map $h$ is a homotopy equivalence.
\end{remark}

We now define a map $f_1: Y_1 \to Y_1$ as follows, where we distinguish three cases according to the position of the point $y_1 \in Y_1$ and to that of $f(y) \in Y$, where $y$ denotes the point ``corresponding'' to $y_1$ in the identical copy in $Y$ of the subspace $X \subset Y_1$ or $(Z \smallsetminus Z_0) \subset Y_1$. The case $y_1 \in Z_0$ can be discarded, as any such $y_1$ is identified via the map $\phi$ with some point of $X$.  We define:

\begin{enumerate}
\item
If $y_1 \in X' \subset Y_1$, then we set $f_1(y_1) = f(y)$.
\item
If $y_1 \in (Z \smallsetminus Z_0) \cup (X \smallsetminus X') \subset Y_1$ and $f(y) \in X' \subset Y$, then we set $f_1(y_1) = f^{t+1}(y)$.
\item
If $y_1 \in (Z \smallsetminus Z_0) \cup (X \smallsetminus X') \subset Y_1$ and $f(y) \in Y \smallsetminus X'$, then we set $f_1(y_1) = f(y)$.
\end{enumerate}

\medskip


These definitions extend continuously to
define a map on $Z_0$ which is
compatible with the gluing maps, and hence
one obtains
directly
a well defined
map $f_1: Y_1 \to Y_1$
with $f_1(X) \subset X$.
The proof of the following proposition is now an exercise:


\begin{proposition}
\label{commutingmaps}
Let $f: Y \to Y,  f_1: Y_1 \to Y_1$ and $h: Y \to Y_1$ be as above.

\begin{enumerate}

\item[(1)]
The maps $f$ and $f_1$ commute via $h$:
$$h f = f_1 h$$

\item[(2)]
If $f$ and $h$ are homotopy equivalences, then so is $f_1$.

\item[(3)]
If $Y$ is a graph-of-spaces
with vertex space collection $X$, and if
 $f$ is a partial train track map relative $X$, then so are $Y_1$ and $f_1$.
\end{enumerate}
\end{proposition}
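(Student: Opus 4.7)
For part (1), I would proceed by direct case analysis on the location of $y \in Y$, using that the underlying point sets of $Y$ and $Y_1$ are identified (only the gluings at $Z_0$ differ). The three main cases are $y \in X'$, $y \in X \setminus X'$, and $y \in Z \setminus Z_0$, each further subdivided according to whether $f(y)$ lies in $X'$. In each sub-case one computes both $h(f(y))$ and $f_1(h(y))$ directly from the definitions; the $f$-invariance of $X'$ forces the two values to agree, yielding $f(y)$ when everything stays outside $X'$ and $f^{t+1}(y)$ whenever $X'$ is entered through either $y$ or $f(y)$. Compatibility at points of $Z_0$ is automatic since the piecewise formulas agree on the identified points.

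For part (2), I would deduce the homotopy equivalence of $f_1$ from the commutation in (1). Choosing a homotopy inverse $h': Y_1 \to Y$ of $h$, the relation $h \circ f = f_1 \circ h$ yields
\[
f_1 \;\simeq\; f_1 \circ h \circ h' \;=\; h \circ f \circ h',
\]
exhibiting $f_1$ up to homotopy as a composition of three homotopy equivalences, and therefore itself a homotopy equivalence.

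For part (3), the graph-of-spaces structure on $Y_1$ is immediate from the construction: the vertex space collection is still $X$, and the edges are those of $\hat \Gamma$, merely reattached via $f^t \circ \phi$ on $Z'_0$ (unchanged on $Z_0 \setminus Z'_0$). The inclusion $f_1(X) \subset X$ follows directly from the definition of $f_1$ together with $f(X) \subset X$ and the $f$-invariance of $X'$. To check that every edge $e$ of the train track part is legal for $f_1$, I would iterate (1) to obtain $f_1^s \circ h = h \circ f^s$ for all $s \geq 1$. Since $h$ fixes the interior of every edge of $\hat \Gamma$ pointwise, the edge $e$ in $Y_1$ equals $h(\bar e)$ for the corresponding edge $\bar e$ in $Y$, so $f_1^s(e) = h(f^s(\bar e))$, and the legality of $\bar e$ makes $f^s(\bar e)$ reduced in $Y$.

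The main obstacle is then to show that the image under $h$ of a path reduced in $Y$ remains reduced in $Y_1$. I would use the combinatorial characterization of reducedness: writing an edge path as an alternation $e_1 \chi_1 e_2 \chi_2 \cdots e_k$ of edges of $\hat \Gamma$ with $X$-paths, reducedness amounts to requiring that whenever $e_{j+1} = \bar e_j$, the loop $\chi_j$ is not nullhomotopic in the ambient complex. The map $h$ preserves each edge and replaces each $\chi_j$ (which lies in a single component of $X$) by either $\chi_j$ itself or by $f^t(\chi_j)$, depending on whether that component lies in $X \setminus X'$ or in $X'$. Since $h$ is a homotopy equivalence --- as guaranteed by Remark \ref{homotopyequivalence} whenever the restriction of $f$ to $X'$ is one, which is the case in all applications of this construction --- the induced map $h_*$ on $\pi_1$ is an isomorphism, so $\chi_j$ is nullhomotopic in $Y$ if and only if its $h$-image is nullhomotopic in $Y_1$. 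Hence reducedness is transported across $h$, so $f_1^s(e)$ is reduced in $Y_1$ for every $s \geq 1$, completing the verification that $f_1$ is a partial train track map relative to $X$.
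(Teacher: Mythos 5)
Your argument is correct. Since the paper declares the proposition ``an exercise'' and supplies no proof, there is nothing to compare against, but your route is the natural one: a point-by-point case check for (1), the three-fold composition $h \circ f \circ h'$ for (2), and for (3) the identity $f_1^s(e) = h(f^s(\bar e))$ obtained by iterating (1), followed by transporting reducedness across $h$. The one step worth emphasizing is that the transport of reducedness is not automatic: the direction you actually use --- that an essential loop $\chi_j$ in a component $X_v \subset X'$ has essential image $f^t(\chi_j)$ --- requires $(f^t)|_{X_v}$ to be $\pi_1$-injective, and this is not a literal hypothesis of part (3). You rightly flag the dependency by invoking Remark \ref{homotopyequivalence}; the paper records exactly this caveat in Remark \ref{preessential}, where it notes that at minimum one needs $f$ to induce a $\pi_1$-isomorphism on each component of $X'$. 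Without that standing assumption the conclusion of (3) genuinely fails: if $f^t$ kills an essential loop of some $X_v \subset X'$, then a legal edge of $Y$ whose $f$-iterates cross into $X_v$ and double back acquires, after applying $h$, a backtracking subpath in $Y_1$ that is not contained in $X$. So your caveat is essential, not cosmetic. One small economy: for part (3) you only need $\pi_1$-injectivity of $h$ on the relevant vertex spaces, not the full homotopy-equivalence hypothesis of Remark \ref{homotopyequivalence} (which you do, of course, need for part (2)).
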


\begin{remark}
\label{preessential}
Notice that the assumption in Remark \ref{homotopyequivalence}, that  $f$ restricts to a homotopy
equivalence of $X'$, is necessary in order to get a homotopy
equivalence $h$ as above, with $h f = f' h$. If one is content with a more
general map $h$ which satisfies
this equation, but is only an isomorphism on
$\pi_{1}$, then the weaker assumption suffices that $f$ induces a
$\pi_{1}$-isomorphism on each connected component of $X'$.
It is, however, unavoidable that  $X'$ contains no {\em inessential} component $X_v$
of $X$, i.e. $X_v$ satisfies $\pi_1 X_v = \{1\}$. Otherwise $\pi_{1} Y_{1}$ would be different
from $\pi_{1} Y$.
\end{remark}

In the context considered below it turns out that case 3. in the definition of the map $f_1$ before Proposition \ref{commutingmaps} does never occur.  In order to simplify the notation, we define:

\begin{definition}
\label{essential}
Let $f: \GG^2 \to \GG^2$ be a partial train track map with 2-cells relative $X \subset \GG^2$.  A connected component of $\GG^2$ is called {\em essential}, if it is mapped by $f$ via a homotopy equivalence to another connected component. A connected component is called {\em pre-essential} if it is mapped by $f$ to an essential component.
\end{definition}

\medskip

We now want to further specify the particular application of the attaching-iteration method that will be used
in the next subsection,
to construct strong partial train track maps via an iterative procedure.
To be specific, other than Proposition \ref{commutingmaps} we also use Proposition \ref{eventuallypseudolegal} and Remark \ref{pseudo-becomes-legal} (b) to obtain part (4) of the following:


\begin{corollary}
\label{attaching-iteration-for-tts}
%
Let $f': \GG'^2 \to \GG'^2$ be a
strong
partial train track map with 2-cells, relative to a subspace $X' \subset \GG'^2$, and assume that in $\GG'^2$ Nielsen faces have been expanded at every periodic INP. Assume furthermore that every connected component of $\GG'^2$ is either essential or pre-essential.

Let $\GG_0$ be a graph-of-spaces relative to $\GG'^2$,
where
$\GG_0$ is obtained from $\GG'^2$ by attaching a finite collection $\hat \Gamma_0$ of edges to $X'$.  Let $f_0: \GG_0 \to \GG_0$ be an extension of the map $f'$, which is a partial train track map relative to $\GG'^2$.

Then there exists a graph-of-spaces $\GG$, given by attaching a collection $\hat \Gamma$ of edges to a collection $X$ of vertex spaces, as well as a partial train track map $f: \GG \to \GG$ relative to $X$, which have the following properties:

\begin{enumerate}
\item[(1)]
There is a homotopy equivalence $h: \GG_0 \to \GG$ with $h(\GG'^2) \subset X$ such that $f_0 h$ is homotopic to $h f$. The map $h$ restricts to a homeomorphism $h_\Gamma: \hat \Gamma_0 \to \hat \Gamma$, with $f h(x) = h f_0 (x)$ for all points $x \in \hat \Gamma_0$ with $f_0(x) \in \hat \Gamma_0$.

\item[(2)]
In particular, if the partial train track map (rel. $\GG'^2$) $f_0$ satisfies the initial-segments condition, then so does $f$.

\item[(3)]
There is a homeomorphism $\psi: \GG'^2 \to X$ and an integer $t \geq 0$, such that
on every essential component of $\GG'^2$ the map $h$ is equal to $\psi f'^t $, while on every pre-essential component of $\GG'^2$ the map $h$ is equal to $\psi$. Moreover, the restriction $f_X: X \to X$ of $f$ is a strong partial train track map with 2-cells relative to $\psi(X')$, and $f_X$ is homotopic to $\psi f' \psi^{-1}$ on every essential component and to $\psi f'^{t+1} \psi^{-1}$ on every pre-essential component of $X$.

\item[(4)]
The image $f(e)$ of any edge $e$ of $\hat \Gamma$ is an alternate concatenation of subpaths that are either contained in $\hat \Gamma$, or else they are strongly legal paths in $X$.
\end{enumerate}
\end{corollary}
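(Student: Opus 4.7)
The plan is to apply the attaching-iteration method of Proposition \ref{commutingmaps} to the pair $(\GG_0, f_0)$, with $X'_{iter}$ equal to the union of essential components of $\GG'^2$ and iteration parameter $t$ chosen large enough to make the subpaths of edge-images pseudo-legal, and then to invoke Lemma \ref{one-edge} to make the newly-attached edges themselves strongly legal. The main ingredients beyond these two building blocks are Proposition \ref{eventuallypseudolegal} (to fix $t$) and Remark \ref{pseudo-becomes-legal} (to convert pseudo-legal paths to strongly legal ones using the already-expanded Nielsen faces in $\GG'^2$).

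I begin by decomposing each image $f_0(e)$, for $e \in \hat \Gamma_0$, as an alternating concatenation of edges in $\hat \Gamma_0$ and subpaths in $\GG'^2$. There are only finitely many such subpaths, so Proposition \ref{eventuallypseudolegal}(b) produces a uniform $t \geq 1$ for which $f'^{\, t}$ carries each of them, up to homotopy rel. endpoints in $\GG'^2$, to a pseudo-legal path. Because every component of $\GG'^2$ is essential or pre-essential, one has $f'(\GG'^2) \subset X'_{iter}$, so the ``problematic'' case 3 in the definition of the iterated map reduces to $f_0(y) \in \hat \Gamma_0$, which causes no harm. Applying Proposition \ref{commutingmaps} then produces $\GG_1$, a partial train track map $f_1: \GG_1 \to \GG_1$ relative to $\GG'^2$, and a homotopy equivalence $h_1: \GG_0 \to \GG_1$ intertwining $f_0$ and $f_1$.

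By construction, each subpath of $f_1(e)$ lying in $\GG'^2$ is either an $f'$-image (strongly legal, since $f'$ is a strong partial train track map) or an $f'^{\, t+1}$-image, hence pseudo-legal by the choice of $t$. Since Nielsen faces are expanded at every periodic INP of $\GG'^2$, Remark \ref{pseudo-becomes-legal}(b) provides a strongly legal homotope within $\GG'^2$ of each such subpath; replacing them yields property (4) of the corollary. I then apply Lemma \ref{one-edge}(b), in the natural extension to image paths of the form $\gamma_0^e \circ e_1^{\pi(e)} \circ \gamma_1^e \circ \cdots$ rather than the single-middle-edge form as stated (this extension is a routine induction on the number of edge factors in the image), to homotope the attaching points of the $e \in \hat \Gamma_0$ so that each becomes strongly legal. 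This produces the final $(\GG, f)$ together with a further homotopy equivalence $h_2: \GG_1 \to \GG$ from Lemma \ref{one-edge}(c). Setting $h = h_2 \circ h_1$ and letting $\psi: \GG'^2 \to X$ be the homeomorphism induced by the construction, properties (1)--(3) follow routinely: the required intertwining combines Proposition \ref{commutingmaps}(1) with Lemma \ref{one-edge}(c); the initial-segments condition is preserved because both steps only move attaching points and leave the interior of each edge fixed; and the explicit formulas $h = \psi f'^{\, t}$ on essential components and $h = \psi$ on pre-essential components read off directly from the attaching-iteration.

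The principal obstacle is the interaction of the attaching-iteration with pre-essential components: $f'$ restricted to such a component is not a priori a homotopy equivalence onto its (essential) image, so one is forced into the $\pi_1$-isomorphism regime of Remark \ref{preessential}. To ensure that $h$ is a genuine homotopy equivalence rather than only a $\pi_1$-equivalence, one must verify that no relevant component is $\pi_1$-trivial; this uses the expanding hypothesis on $f'$ and the free-group structure of $\pi_1 \GG'^2$. A secondary technicality is the generalization of Lemma \ref{one-edge}(b) to arbitrary alternating concatenations, handled by a direct induction on the number of edge factors in $f_1(e)$; full details of both points are deferred to \cite{Lu-better-tts}.
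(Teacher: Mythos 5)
The paper's own ``proof'' of this corollary is just the sentence preceding its statement: the result is obtained by combining Proposition \ref{commutingmaps} (the attaching-iteration method) with Proposition \ref{eventuallypseudolegal} and Remark \ref{pseudo-becomes-legal}~(b). Your first two steps --- choosing $X'_{iter}$ as the union of essential components, picking $t$ uniformly via Proposition \ref{eventuallypseudolegal}~(b), applying Proposition \ref{commutingmaps}, and then homotoping the resulting pseudo-legal subpaths over the already-expanded Nielsen faces of $\GG'^{2}$ to get strongly legal subpaths --- reproduce exactly that argument and suffice to establish all four conclusions.

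The gap is in your third step. You read into the corollary a stronger conclusion than it actually makes: the output $f:\GG\to\GG$ is only required to be a \emph{partial} train track map relative to $X$ (i.e.\ relative to the whole copy $\psi(\GG'^{2})$), and conclusion~(4) is only about the subpaths of $f(e)$ \emph{inside} $X$ being strongly legal --- the edges $e\in\hat\Gamma$ themselves are not claimed to be strongly legal. The paper emphasizes this explicitly in the paragraph immediately following the corollary: ``the resulting map $f:\GG\to\GG$ is not necessarily yet a partial train track map relative to $\psi(X')$.'' Your insertion of Lemma \ref{one-edge}(b) to force the edges of $\hat\Gamma$ to become strongly legal is therefore not part of this corollary; it belongs to the \emph{next} step (Proposition \ref{strong-tt-representative}), and even there it is invoked only in Case~4, where by hypothesis $f_k(e_i)=\gamma^i_0\circ e^{\pi(i)}\circ\gamma^i_1$ has exactly one middle edge. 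Your claim that Lemma \ref{one-edge}(b) generalizes ``by a routine induction'' to arbitrary alternating concatenations $\gamma^e_0\circ e_1\circ\gamma^e_1\circ e_2\circ\cdots$ is not correct: moving the attaching points of $e$ only adjusts the boundary turns of $f(e)$, while the internal turns at the endpoints of $e_1,e_2,\dots$ are determined by where \emph{those} edges attach, and a single edge may reappear several times in $f(e)$ with conflicting constraints. In the expanding case the paper instead deduces strong legality from the initial-segments condition via Lemma \ref{exponential-stratum}; no version of Lemma \ref{one-edge} is available there.

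The ``principal obstacle'' you raise concerning pre-essential components is also a non-issue. Since you correctly take $X'_{iter}$ to be the union of the \emph{essential} components only, Remark \ref{homotopyequivalence} applies directly --- $f'$ is, by definition, a homotopy equivalence on $X'_{iter}$ --- so there is no need to retreat to the $\pi_1$-isomorphism regime of Remark \ref{preessential}, nor to invoke the expanding hypothesis or the free-group structure. If you drop the Lemma \ref{one-edge} step and the associated discussion, what remains is a correct proof that coincides with the paper's intended one.
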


The above proposition is a crucial step in our iterative procedure given in subsection \ref{rel-tt-to-beta-tt} to built $\beta$-train track maps from relative train track maps. The reader should be warned, however, that 
the resulting map $f: \GG \to \GG$ is not necessarily yet a partial train track map relative to $\psi(X')$.
This conclusion would in general be wrong, despite of the fact that $f$ is
a partial train track map relative to $X$,
that the restriction of $f$ to $X$ is
a partial train track map relative to $\psi(X')$, and
that property (4) of the above proposition holds:  A priori, it may still happen that a path $f^t(e)$ is not reduced relative to $\psi(X')$, for some edge $e$ of $\hat \Gamma$ and some $t \geq 1$.

\subsection{Construction of a $\beta$-train track map from a relative train track map}
\label{rel-tt-to-beta-tt}

Bestvina-Handel have proved in \cite{BH} that for every automorphism $\alpha$ of $\FN$ there exists a finite connected graph $\Gamma$ with identification $\pi_1 \Gamma = \FN$, such that $\alpha$ can be represented by a relative train track map $g: \Gamma \to \Gamma$. This means,
using the terminology introduced
in the previous subsections,
that there is an $g$-invariant filtration $\Gamma_0 \subset \Gamma_1 \subset \ldots \Gamma_s = \Gamma$ of (not necessarily connected) subgraphs, where $\Gamma_0$ is the vertex set of $\Gamma$, such that
the following conditions are satisfied:

\begin{properties}
\label{relative-tt-properties}
For any $k \in \{1, \ldots, s\}$ we denote by $g_k$ the restriction of $g$ to the $g$-invariant subgraph $\Gamma_k \subset \Gamma$.
\begin{enumerate}
\item[(1)]
The map $g_k: \Gamma_k \to\Gamma_k$ is a partial train track map relative to $\Gamma_{k-1}$.
\item[(2)]
If the map $g_k$ is expanding (relative to $\Gamma_{k-1}$), then it satisfies the initial-segments condition (see Definition \ref{initial-segments}).
\item[(3)]
If the map $g_k$ is non-expanding (relative to $\Gamma_{k-1}$),
then either all of $\Gamma_k$ is mapped by $f_k$ to $\Gamma_{k-1}$, or else $g_k$ is, modulo $\Gamma_{k-1}$, a transitive permutation of the edges of $\Gamma_k \smallsetminus \Gamma_{k-1}$.
\end{enumerate}
\end{properties}

We now describe the construction that derives  from such a relative train track map $g: \Gamma \to \Gamma$ a $\beta$-train track map $f: \GG^2 \to \GG^2$.  In a first attempt we concentrate on the weaker property that $f$ is a strong partial train track map with 2-cells. Subsequently we show that the construction defined below yields indeed a map that satisfies the additional properties claimed in Theorem \ref{betterttrepresentative}.

\smallskip

Our construction
%
%
proceeds iteratively, moving at each step one level up, i.e. from $g_{k-1}$ to $g_k$.
To start this iterative process, note that for $k = 1$ the map $g_1: \Gamma_1 \to \Gamma_1$ is an absolute train track map (in the sense of \cite{BH}), since the relative part $\Gamma_0$ consists precisely of the vertices of $\Gamma$.  Thus $g_1$ is in particular a  strong partial train track map with 2-cells (where the retraction $r$ is simply the identity map).

\smallskip

Let us now assume, by induction, that there is a strong partial train track map with 2-cells $f_{k-1}: \GG^2_{k-1} \to \GG^2_{k-1}$, relative to a subspace $X_{k-1}$,
as well as a homotopy equivalence $h_{k-1}: \Gamma_{k-1} \to \GG^2_{k-1}$ such that $h_{k-1} g_{k-1}$ is homotopic to $f_{k-1} h_{k-1}$. We then consider a copy $\hat \Gamma$ of the edges of $\Gamma_k \smallsetminus \Gamma_{k-1}$, and we attach each edge $\hat e$ of $\hat \Gamma$, with copy $e$ in $\Gamma_k \smallsetminus \Gamma_{k-1}$, at the points $h_{k-1}(\partial e)$ to $\GG^2_{k-1}$. We define the map $h_k$ on $\Gamma_k$ to agree with $h_{k-1}$ on $\Gamma_{k-1}$, and every edge $e$ of $\Gamma_k \smallsetminus \Gamma_{k-1}$ is mapped by $h_k$ to its copy $\hat e$ in $\hat \Gamma$.

We define the map $f_k$ on $\hat \Gamma \cup \GG^2_{k-1}$ to agree with $f_{k-1}$ on $\GG^2_{k-1}$, and for each edge $\hat e \in \hat \Gamma$ we define $f_k(\hat e)$ to be
the concatenation $\gamma_0 \circ h_k g_k(e) \circ \gamma_1$. Here the $\gamma_i$ are the paths traced out by the points $h_{k-1}(\partial e)$ during the homotopy between $h_{k-1} g_{k-1}$ and $f_{k-1} h_{k-1}$.
 Thus we obtain a graph-of-spaces $\GG_k = \hat \Gamma \cup \GG^2_{k-1}$ with vertex space collection $\GG^2_{k-1}$, a map $f_k: \GG_k \to \GG_k$ with $f_k(\GG^2_{k-1}) \subset \GG^2_{k-1}$, and a homotopy equivalence $h_k: \Gamma_k \to \GG_k$ with $h_k(\Gamma_{k-1}) \subset \GG^2_{k-1}$, such that $h_k g_k$ is homotopic to $f_k h_k$.

\smallskip

As first step in our iterative construction
we expand Nielsen faces in $\GG_{k-1}^2$, until at all periodic INP's of the partial train track map $f_{k-1}$ rel. $X_{k-1}$ there is a Nielsen face attached. All auxiliary edges introduced in this procedure are added to the relative subspace $X_{k-1}$.
We extend the strong deformation retraction $r_{k-1}$ on $\GG^2_{k-1}$, which exists by induction, by precomposing it with the
strong deformation retraction $\hat r_k$ which is defined as is the map $\hat r$
in Definition \ref{Nielsen-faces} (c):
every auxiliary edge is pushed over the corresponding Nielsen face, and 
any of the original points of $\GG^2_{k}$
is left fixed.

\smallskip
Next we apply Corollary \ref{attaching-iteration-for-tts}, to obtain that
the image $f_k(\hat e)$ of any edge $\hat e$ in $\hat \Gamma$ is an alternating concatenation of subpaths in $\hat \Gamma$ and of strongly legal paths in $\GG^2_{k-1}$ (relative to $X_{k-1}$).
For simplicity we keep the same names, thus suppressing notationally the homotopy equivalence $h$ as well as the homeomorphisms $h_\Gamma$ and $\psi$ from Corollary \ref{attaching-iteration-for-tts}.


\begin{proposition}
\label{strong-tt-representative}
The resulting map $f_k$ is (after a homotopically irrelevant modification in Case 4 of the proof) a strong partial train track map $f_k: \GG^2_k \to \GG^2_k$ with 2-cells, relative to the subspace $X_k \subset \GG^2_k$.

The relative part $X_k$ is equal to $X_{k-1}$, except for Case 3 of the proof, where $X_k$ is specified
in the proof.
\end{proposition}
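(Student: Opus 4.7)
The plan is to verify the strong partial train track condition for $f_k$ by decomposing the argument along the trichotomy provided by Properties \ref{relative-tt-properties}(2)--(3) for $g_k: \Gamma_k \to \Gamma_k$. The preparatory expansion of Nielsen faces at all periodic INPs of $f_{k-1}$ and the application of Corollary \ref{attaching-iteration-for-tts} have already been carried out, so what remains is to establish strong legality of the edges of $\hat \Gamma$ and to specify $X_k$.

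In the expanding case (Case 1), Property (2) gives the initial-segments condition for $g_k$; Corollary \ref{attaching-iteration-for-tts}(2) transports it to $f_k$, and Corollary \ref{attaching-iteration-for-tts}(4) guarantees that each image $f_k(\hat e)$ is an alternating concatenation of subpaths in $\hat \Gamma$ and strongly legal paths in $\GG^2_{k-1}$. These are precisely the hypotheses of Lemma \ref{exponential-stratum}, whose conclusion furnishes the strong partial train track property of $f_k$ relative to $X_k := X_{k-1}$. When $g_k$ is non-expanding with $g_k(\Gamma_k) \subset \Gamma_{k-1}$ (Case 2), every edge $\hat e$ has image in $\GG^2_{k-1}$, so $f_k^t(\hat e) \subset \GG^2_{k-1}$ for every $t \geq 1$ and strong legality is inherited at once from $f_{k-1}$; again $X_k := X_{k-1}$. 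When $g_k$ is non-expanding and restricts to a transitive permutation of the edges modulo $\Gamma_{k-1}$ (Case 3), I absorb the new edges into the relative part by setting $X_k := X_{k-1} \cup \hat \Gamma$; the train track part of $\GG^2_k$ is then unchanged and strong legality of its edges is directly inherited from $f_{k-1}$.

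The subtle situation is Case 4, where a (periodic) INP of $f_k$ that traverses both the old and the new stratum may have its tip positioned so as to destroy strong legality of some newly attached train track edge. Here I would apply Lemma \ref{one-edge}(a) to slide the attaching points of the offending edges through $X_{k-1}$ and replace the ``correction paths'' $\gamma_0, \gamma_1$ used in the construction of $f_k(\hat e)$ by homotopic paths that render the edge strongly legal. Part (c) of Lemma \ref{one-edge} supplies a homotopy equivalence that commutes with $f_k$ up to homotopy, so that the underlying outer automorphism class of $\alpha$ is unaffected, and the $X_{k-1}$-based Nielsen-face structure survives the adjustment.

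The main obstacle I foresee lies precisely in Case 4: ensuring that the sliding procedures of Lemma \ref{one-edge} can be performed simultaneously for every edge at which such a conflict arises, and that the resulting map remains a strong partial train track map with 2-cells relative to the eventually chosen $X_k$. Proposition \ref{finitely-many-INP's} makes the set of periodic INPs finite, reducing the problem to finitely many simultaneous local adjustments, handled uniformly by Lemma \ref{one-edge}(b); nevertheless, a careful bookkeeping of how the strong deformation retraction $r$ and the collection of Nielsen faces evolve through successive modifications is required to close the induction and hand a correctly prepared $(\GG^2_k, X_k, f_k)$ to the next iteration.
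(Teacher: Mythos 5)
Your Cases 1 and 2 track the paper's argument faithfully: Lemma \ref{exponential-stratum} via the initial-segments condition for the expanding case, and direct inheritance of strong legality from $f_{k-1}$ when the new edges map into $\GG^2_{k-1}$.

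The problem is in your handling of the non-expanding, transitive-permutation alternative. You set $X_k := X_{k-1} \cup \hat\Gamma$ unconditionally in this case, but this only makes sense when the correction paths $\gamma_0^i, \gamma_1^i$ appearing in $f_k(\hat e_i) = \gamma_0^i \circ \hat e^{\pi(i)} \circ \gamma_1^i$ all lie in $X_{k-1}$. If some $\gamma_j^i$ traverses edges of the train track part of $\GG^2_{k-1}$, then $f_k(\hat\Gamma) \not\subset \hat\Gamma \cup X_{k-1}$, so $X_k = X_{k-1} \cup \hat\Gamma$ is not even $f_k$-invariant and cannot serve as the relative part. This is precisely the distinction the paper draws between its Case 3 (correction paths in $X_{k-1}$, edges absorbed into the relative part, $X_k := \hat\Gamma \cup X_{k-1}$) and its Case 4 (some correction path outside $X_{k-1}$, edges kept in the train track part, $X_k := X_{k-1}$, and Lemma \ref{one-edge} invoked to slide attaching points and restore strong legality). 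Because you frame your entire decomposition around the trichotomy for $g_k$ rather than for $f_k$, you cannot see this split: the correction paths come from the homotopy between $h_{k-1}g_{k-1}$ and $f_{k-1}h_{k-1}$, so whether they leave $X_{k-1}$ is a property of $f_k$, not of $g_k$.

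Your ``Case 4'' then appears as a floating diagnostic (``an INP with its tip positioned so as to destroy strong legality'') rather than as a well-defined member of the partition, and you never commit to $X_k = X_{k-1}$ there except implicitly via ``the $X_{k-1}$-based Nielsen-face structure survives.'' The remedy you propose — Lemma \ref{one-edge}(a)/(b)/(c) — is the right tool, but it is being aimed at the wrong target: its job is not to repair an accidental bad INP but to handle the entire subcase $f_k(\hat\Gamma) \not\subset \hat\Gamma \cup X_{k-1}$, which your Case 3 has already mis-assigned. Rewrite the case split in terms of the image $f_k(\hat\Gamma)$: (i) $f_k$ expanding; (ii) $f_k(\hat\Gamma) \subset \GG^2_{k-1}$, $\not\subset X_{k-1}$; (iii) $f_k(\hat\Gamma) \subset \hat\Gamma \cup X_{k-1}$ (set $X_k = \hat\Gamma\cup X_{k-1}$); (iv) otherwise (set $X_k = X_{k-1}$, apply Lemma \ref{one-edge}). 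Then the remainder of your argument goes through.
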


\begin{proof}
We will distinguish four cases as follows:

\smallskip
\noindent
{CASE 1:}  Assume that $f_k$ is expanding (relative to $\GG^2_{k-1}$). In this case we have the initial-segments condition given as hypothesis
by Property \ref{relative-tt-properties} (2), so that we can apply
Lemma \ref{exponential-stratum}
to obtain directly the statement of the above proposition, for
$X_k := X_{k-1}$.

\smallskip
\noindent
{CASE 2:}  If $f_k$ is not expanding (relative to $\GG^2_{k-1}$), and if $f_k(\hat \Gamma)$ is entirely contained in $\GG^2_{k-1}$ but not in $X_{k-1}$, then
after the above application of Corollary \ref{attaching-iteration-for-tts} each of the paths $f_k(\hat e)$ is a strongly legal path in $\GG^2_{k-1}$, so that
again the claim follows directly, for
$X_k := X_{k-1}$.

\smallskip
\noindent
{CASE 3:}  If $f_k$ is not expanding (relative to $\GG^2_{k-1}$), and if $f_k(\hat \Gamma)$ is entirely contained in $\hat \Gamma \cup X_{k-1}$, we define $X_k = \hat \Gamma \cup X_{k-1}$
and obtain again directly the above claim.
Note that in this case
the train track part of $\Gamma_k$ grows polynomially under iteration of $f$.

\smallskip
\noindent
{CASE 4:}  Assume that $f_k$ is not expanding (relative to $\GG^2_{k-1}$), but that $f_k(\hat \Gamma)$ is not entirely contained in $\GG^2_{k-1}$, and also not in $\hat \Gamma \cup X_{k-1}$.
In this case
for every edge $e_i$ of $\hat \Gamma$ one has $f_k(e_i) =\gamma^i_0 \circ e^{\pi(i)} \circ \gamma_1^i$, for some permutation $\pi$ of the edges of $\hat \Gamma$ and legal paths $\gamma^i_0, \gamma^i_1$ in $\GG^2_{k-1}$ relative to $X_{k-1}$.
%
Furthermore, not all of the $\gamma^i_0, \gamma^i_1$ are contained in $X_{k-1}$ (or else we would be in Case 3).
%
Thus we can apply Lemma \ref{one-edge} to define a modification of $\GG_k$ and $f_k$ (which is homotopically trivial rel. $X_{k-1}$), and with this modification our claim is now proved by Lemma \ref{one-edge},
for $X_k := X_{k-1}$.
\end{proof}



\medskip

We now verify inductively that the additional properties from Theorem \ref{betterttrepresentative} are satisfied, i.e. that the map $f_k$ is indeed a $\beta$-train track map. To be precise, for this purpose one has first to expand further Nielsen faces in $\GG^2_k$
until
at every periodic INP of $f_k$ a Nielsen face is expanded. (Note that this is anyway the first modification in the above described iterative construction to produce strong partial train tracks with 2-cells, when passing to the next level, with index $k+1$).

\smallskip

In particular, this expansion of Nielsen faces, at every periodic INP, has to be done at the very last level of our iterative procedure, to obtain from $f_s: \GG^2_s \to \GG^2_s$ a {\em $\beta$-train track representative} $f: \GG^2 \to \GG^2$ of the same automorphism of $\FN$ that was originally represented by the relative train track map $g: \Gamma \to \Gamma$.

\smallskip

We state the following theorem for the map $f$, but we prove it via induction by passing from $f_{k-1}$ to $f_k$.




%

\begin{theorem}
\label{additional-properties}
The strong partial train track map with 2-cells $f: \GG^2 \to \GG^2$, relative to $X$, is a { $\beta$-train track map}, in that it has the following additional properties:
\begin{enumerate}
\item[(1)]
The map $f$ is expanding relative to $X$.

\item[(2)]
The map $f$ has polynomial growth on the relative part $X$.

\item[(3)]
Every 2-cell of $\GG^2$ is a Nielsen face that has been expanded at a periodic INP of some of the strong partial train track maps $f_k: \GG^2_k \to \GG^2_k$ rel. $X_{k}$, in the iterative construction for any of the steps $k = 1, \ldots,  s$.

\item[(4)]
The
above defined retraction $r = r_{s} \circ  \hat r$ maps $\GG^2$ to a
subgraph $\Gamma'$ of $\GG^2_k$,
such that the restriction $r \circ f|_{\Gamma'}: \Gamma' \to \Gamma'$ is a relative train track map with respect to the iteratively defined filtration $\Gamma'_0 \subset \Gamma'_1 \subset \ldots \subset \Gamma'_s$.
The graph $\Gamma'$ is obtained from the 1-skeleton of $\GG^2$ by omitting all auxiliary edges introduced when expanding a Nielsen face for any of the intermediate
partial train track maps $f_k: \GG_k \to \GG_k$ rel. $\GG^2_{k-1}$, for $k = 1, \ldots,  s$.

\item[(5)]
Every path $\gamma$ in $\GG^2$
has an iterate $f^t(\gamma)$ which is homotopic rel. endpoints to a strongly legal path.
There is an upper bound to the exponent $t$, which depends only on the number $q$ of factors in any decomposition of $\gamma = \gamma_1 \circ \ldots \circ \gamma_q$ as concatenation of strongly legal paths $\gamma_i$, and not on the particular choice of $\gamma$ itself.

The analogous statement is true for free homotopy classes of loops in $\GG^2$.

\item[(6)]
The relative part $X$ is a graph, and the marking map (given for example by the retraction $r$ together with the identification $\pi_1 \Gamma' = \FN$) restricts on each connected component to a monomorphism.

\item[(7)]
%
Strongly legal paths in $\GG^2$, or strongly legal paths with $r$ applied to some of its subpaths, lift to quasi-geodesics in the universal covering $\tilde \GG^2$.
\end{enumerate}
\end{theorem}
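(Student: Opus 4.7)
The plan is to prove all seven properties simultaneously by induction on the filtration index $k = 1, \ldots, s$ of the original relative train track map $g: \Gamma \to \Gamma$, mirroring the iterative construction that produced $f_k: \GG^2_k \to \GG^2_k$ in Proposition~\ref{strong-tt-representative}. In the base case $k=1$, the map $f_1$ is an absolute train track on a graph with trivial relative part $X_1 = \Gamma_0$, so (1), (3), (6) are essentially vacuous, (2) is empty, (4) holds with $\Gamma'_1 = \Gamma_1$ and trivial retraction, (5) reduces to the classical statement that legal paths become strongly legal after expanding Nielsen faces at all periodic INPs (via Proposition~\ref{eventuallypseudolegal} and Remark~\ref{pseudo-becomes-legal}(b)), and (7) is trivial because the lift of a legal path to $\tilde\Gamma_1$ is geodesic.

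For the inductive step, properties (1)--(4) follow by inspecting the four cases of the proof of Proposition~\ref{strong-tt-representative}. For (1), Cases 1, 2 and 4 produce new train track edges whose images under iteration traverse two or more edges of $\hat\Gamma$, while the edges added in Case 3 are placed into $X_k$ rather than $\hat\Gamma$ and hence never appear as non-expanding edges of the train track part. For (2), polynomial growth on $X$ comes precisely from the Case 3 edges (where $f_k$ is a permutation modulo $X_{k-1}$) and from auxiliary edges (which are $f_k$-fixed up to relative homotopy), composing with the polynomial growth in $X_{k-1}$ coming from the inductive hypothesis. Property (3) is immediate because 2-cells are only ever attached via Definition~\ref{Nielsen-faces}. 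For (4), the retraction $r$ collapses each auxiliary edge onto its associated periodic INP; removing those edges from the 1-skeleton yields a subgraph $\Gamma'$ which inherits a filtration $\Gamma'_0 \subset \cdots \subset \Gamma'_s$ from the levels of the construction, and $r\circ f|_{\Gamma'}$ differs from $g$ (up to the homotopy equivalences $h_k$) only by relative homotopies that preserve the Bestvina--Handel relative train track axioms.

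Properties (5) and (6) build on the structural work above. For (6), $X$ is the union of vertices, the Case 3 edges introduced at various levels, and the auxiliary edges from Nielsen face expansions, all of which are one-dimensional, so $X$ is a graph; $\pi_1$-injectivity of each component into $\FN$ is preserved along the induction by the essentiality/pre-essentiality hypothesis of Corollary~\ref{attaching-iteration-for-tts} (compare Remark~\ref{preessential}). For (5), apply Proposition~\ref{eventuallypseudolegal} to the underlying partial train track map $f_1$ (before Nielsen face expansion), producing an iterate of $\gamma$ that is homotopic rel.\ endpoints to a pseudo-legal path, with exponent bounded only in terms of the number of legal factors of $\gamma$; since at the final stage a Nielsen face has been expanded at every periodic INP, Remark~\ref{pseudo-becomes-legal}(b) converts pseudo-legal into strongly legal. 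The analogous statement for free homotopy classes of loops follows by the same mechanism.

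The main obstacle is property (7), the quasi-geodesic property in $\tilde\GG^2$. The difficulty is that $\tilde\GG^2$ is not a tree, because of the Nielsen faces, but only quasi-isometric via the lifted retraction $\tilde r: \tilde\GG^2 \to \tilde\Gamma'$ to the tree $\tilde\Gamma'$. The strategy is: first show that for a strongly legal path $\gamma$, the path $\tilde r(\gamma)$ in the tree $\tilde\Gamma'$ is reduced, hence geodesic, which follows from the defining property of strongly legal paths together with the fact that $r$ replaces each auxiliary edge by its associated pre-INP; second, use Proposition~\ref{finitely-many-INP's} to conclude that there are only finitely many periodic INPs, hence $\tilde r$ moves points by a uniformly bounded amount along fibers; third, apply a bounded cancellation argument to transfer geodesy in $\tilde\Gamma'$ to quasi-geodesy in $\tilde\GG^2$, with constants depending only on $f$. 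The refinement to paths obtained from a strongly legal one by applying $r$ to some subpaths follows the same scheme, since replacing auxiliary edges by pre-INPs introduces only a uniformly bounded multiplicative and additive distortion in the absolute metric.
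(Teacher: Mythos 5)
Your overall strategy -- a single induction on the filtration level $k$, tracking the four cases of Proposition~\ref{strong-tt-representative}, and then invoking Proposition~\ref{eventuallypseudolegal}, Remark~\ref{pseudo-becomes-legal}, and the deformation retraction $r$ for the analytic properties (5) and (7) -- is the same as the paper's, and for items (1)--(5) and (7) your arguments are correct in substance, if occasionally terser on the case analysis (for instance, in item (1), Case~4 requires the transitivity of the permutation $\pi$ from Property~\ref{relative-tt-properties}(3) to guarantee that some iterate of each new edge picks up a train track edge from $\GG^2_{k-1}$; you should say so explicitly).

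The genuine gap is in item (6), in the $\pi_1$-injectivity claim. You attribute injectivity of $\pi_1 X_v \to \FN$ to the ``essentiality/pre-essentiality hypothesis of Corollary~\ref{attaching-iteration-for-tts} (compare Remark~\ref{preessential}).'' But Remark~\ref{preessential} addresses a different issue: it ensures that the re-gluing step of the attaching-iteration method is a homotopy equivalence (so that $\pi_1 Y_1 \cong \pi_1 Y$). It says nothing about whether a \emph{component of the relative part} injects into $\FN$. The actual obstruction to injectivity is created precisely by the auxiliary edges: if three or more auxiliary edges share endpoints in the right way, they can close up into a loop in $X$ that bounds a disc in $\GG^2$ (a union of Nielsen faces), giving a nontrivial element of $\pi_1 X_v$ that dies in $\pi_1 \GG^2 = \FN$. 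Essentiality of components is irrelevant to this phenomenon. The correct fix, which the paper points to, is the tripod/half-edge construction of Aside~\ref{doubleauxiliary}: one introduces an interior auxiliary vertex and auxiliary half-edges so that such loops become contractible already inside $X$. Without invoking that construction (or an equivalent device), the injectivity claim in your item~(6) does not follow, and the argument as written would in general be false.

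Two smaller remarks. In item (4), you describe $r\circ f|_{\Gamma'}$ as differing from $g$ ``only by relative homotopies that preserve the Bestvina--Handel axioms''; the paper's actual mechanism is more specific -- each application of the attaching-iteration method to $f_k$ is mirrored by an operation on $g_k: \Gamma_k \to \Gamma_k$ which again produces a relative train track map -- and your phrasing should be tightened to that statement. In item (7), your route (show $\tilde r(\gamma)$ is geodesic in $\tilde\Gamma'$, then use finiteness of INPs plus a bounded cancellation argument) is correct but heavier than needed: since $r$ is a deformation retraction and hence a $\pi_1$-isomorphism, its lift $\tilde r$ is automatically a quasi-isometry $\tilde\GG^2 \to \tilde\Gamma'$, and the claim then follows directly from the fact that strongly legal paths retract to reduced, hence geodesic, paths in $\tilde\Gamma'$.
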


\begin{proof}
(1)
This property is actually part of the fact that $f_k$ is strong, see Definition \ref{strong-tts}.
However, to be explicit, we note
that
in Cases 1 and 3 this property follows directly from the induction hypotheses, as
%
the train track part of $\GG^2_k$ is equal to that of $\GG^2_{k-1}$. In Cases 2 and 4,
the expansiveness of $f_k$ is a direct consequences of the
particular properties stated
in each of those cases. Please note that in Case 4 we
need to use the hypothesis from
Property \ref{relative-tt-properties} (3)
that the permutation $\pi$ is transitive.

\smallskip
\noindent
(2) To show this, we first observe that all auxiliary edges added to $X_{k-1}$, in the first step of our iterative construction,
are permuted by $f_k$ among themselves.
Thus, in Cases 1, 2 and 4 we can use directly the inductive hypothesis that $f_{k-1}$ has polynomial growth on
its relative part,
since in these three cases,
up to adding the auxiliary edges, the relative part has not been changed when passing from $f_{k-1}$ to $f_k$.
In Case 3 we observe
%
that the edges added to $X_{k-1}$ are also permuted among themselves, up to adding initial and terminal subpaths to them which are entirely contained in $X_{k-1}$, so that again the inductive hypothesis about polynomial growth of $f_{k-1}$ suffices to derive the claim.


\smallskip
\noindent
(3)
This follows directly from the definition of our iterative construction of $f_k$ and $\GG^2_k$.

\smallskip
\noindent
(4)
To see this, we first observe that before stating Proposition \ref{strong-tt-representative} and considering the 4 cases, we applied the attaching-iteration method through Corollary \ref{attaching-iteration-for-tts} to $f_k: \GG_k \to \GG_k$.
It is easy to see that
any time one applies
the attaching-iteration method to $f_k$, one can simultaneously apply the same operations to the given relative train track map $f: \Gamma \to \Gamma$ (or, more precisely, to $g_k: \Gamma_k \to \Gamma_k$), and the resulting map is again a relative train track map.
Thus we can use by induction that the claim is true for $f_{k-1}$ after having applied Corollary \ref{attaching-iteration-for-tts},
and thus obtain the claim for $f_k$ directly from the definitions in each of the 4 above cases.
%

\smallskip
\noindent
(5)
By Remark \ref{pseudo-becomes-legal} (a) and Proposition \ref{eventuallypseudolegal} (a) there is an iterate $f_k^t(\gamma)$ that is homotopic to a pseudo-legal path. Since in $\GG^2_k$ Nielsen faces have been expanded at every periodic INP, the pseudo-legal path can be homotoped,
at each of its periodic INP's, over the corresponding Nielsen face, to give after finitely many of such alterations a strongly legal path that is homotopic to $f_k^t(\gamma)$.

It follows directly from Proposition \ref{eventuallypseudolegal} (b) that the number $t \geq 0$ of iterates of $f_k$, needed above to make $f_k^t(\gamma)$ pseudo-legal, can be bounded above as function of the number of illegal turns in the originally given path $\gamma$.

\smallskip
\noindent
(6)
The fact that $X_k$ is a graph follows directly from the induction hypothesis that $X_{k-1}$ is a graph, since in the above inductive procedure only edges have been added. The (not really essential) fact that the marking map is injective on each component, however, would in general be wrong, unless we actually introduce, instead of auxiliary edges, an auxiliary vertex and auxiliary half edges, as explained in the Aside \ref{doubleauxiliary}. For more detail see Definition 3.7 of \cite{Lu1}.


\smallskip
\noindent
(7)
The retraction $r_k: \GG^2_k \to \Gamma'_k \subset \GG^2_k$ is a deformation retraction and as such
homotopic (in $\GG^2_k$) to the identity map of $\GG^2_k$, and it maps strongly legal paths to reduced paths in $\Gamma'_k \subset \GG^2_k$. Since reduced paths in $\Gamma'_k$ lift to geodesics in the universal covering $\tilde \Gamma'_k$ and thus to quasi-geodesics in $\tilde \GG^2_k$, it follows that strongly legal paths, or strongly legal paths with $r_k$ applied to some of its subpaths,
also
lift
to quasi-geodesics in $\tilde \GG^2_k$.
\end{proof}

\subsection{The structure of automorphisms of $\FN$}
\label{structure-of-autos}



Partial train track maps with Nielsen faces as introduced in \cite{Lu1}, and hence in
particular the $\beta$-train track maps considered here, have
a
crucial advantage over
all other train tracks, classical \cite{BH} or improved \cite{bfhtits1}
or improved-improved \cite{BG3},
\cite{Feighn-Handel},
etc: The structure of the train track transition matrix $M(f) = (m_{e,
e'})_{e, e' \in \hat \Gamma}$ is an
invariant of the conjugacy class of the outer automorphism $\hat
\alpha \in \Out \FN$
defined by $\alpha$. Here the coefficient $m_{e, e'}$ is given by the
number of times that the (legal) path $f(e')$ crosses over $e$ or
its inverse $\bar e$. The following result has been shown in
\cite{Lu1}, \S 4.  For a reader friendly exposition of train tracks,
invariant $\R$-trees, and the precise relationship to the
transition matrix and its eigen vectors, see \cite{lu2}.

\begin{theorem}
\label{structuretheorem}
(a)
For any $\beta$-train track representative $f:\GG^{2} \to \GG^{2}$
of $\alpha \in \Aut \FN$
there is a canonical bijection between the set of $\alpha$-invariant $\R$-trees
$T$ as given in Proposition \ref{invarianttree} (a) and the set of
row eigen
vectors $\vec v_{*}$
of $M(f)$ with real eigen value $\lambda > 1$.

\smallskip
\noindent
(b)  If $T$ is given by the eigenvector $\vec v_{*}$ as above, then
every conjugacy class of non-trivial point stabilizers in $T$,
unless it is of polynomial $\alpha$-growth,
is given by
a non-trivial $M(f)$-invariant subspace of $\R^{\hat \Gamma}$ on which
$\vec v_{*}$ has coefficients of value 0.
These invariant subspaces are in 1-1 relationship with
those
complementary
components of the support of $\vec v_{*}$ in $\GG^{1}$
that are not contained in $X$.
In particular, the induced
automorphism on these point stabilizers is represented by a
sub-train-track of $\GG^{2}$, given by those complementary
components, provided with the
corresponding
restriction of the
train track map $f$.
\end{theorem}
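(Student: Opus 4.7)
The plan is to construct the bijection in part (a) by establishing mutually inverse maps between eigenvectors and invariant trees, and then to analyze the vanishing coordinates of such eigenvectors to prove part (b). For the forward direction of (a), given an $\alpha$-invariant tree $T$ with stretching factor $\lambda > 1$, I would define $v_{e} := \ell_{T}(\widetilde e)$, where $\widetilde e$ is a lift to $\widetilde \GG^{2}$ of a train track edge $e \in \hat\Gamma$ and $\ell_{T}(\cdot)$ is the length of the image under the canonical $\FN$-equivariant map $\widetilde \GG^{2} \to T$. Since every edge of $\hat\Gamma$ is strongly legal (Theorem~\ref{betterttrepresentative}(c)) and strongly legal paths project isometrically onto geodesics of $T$---a standard train-track consequence of the fact that all iterated $f$-images stay relatively reduced---both $e$ and each $f(e)$ map to honest geodesic segments whose lengths are additive under concatenation. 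Writing $\ell_{T}(f(e)) = \sum_{e'} m_{e',e}\, v_{e'}$ and equating with $\lambda\, v_{e}$ (by $\alpha$-equivariance) yields precisely the row-eigenvector equation $\vec v\, M(f) = \lambda\, \vec v$.

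For the reverse direction, given a non-negative row eigenvector $\vec v_{*}$ with eigenvalue $\lambda > 1$, I would assign length $v_{e}$ to each train-track edge and length $0$ to each edge of $X$ and each auxiliary edge. This produces an $\FN$-invariant pseudo-metric on $\widetilde \GG^{2}$ whose metric quotient $T$ inherits an isometric $\FN$-action, and the eigenvector equation forces the stretching factor to be $\lambda$. The main obstacle will be verifying that $T$ is a genuine $\R$-tree satisfying the very-small conditions of Proposition~\ref{invarianttree}. I would dispatch this by the following device: Theorem~\ref{betterttrepresentative}(h) asserts that for any homotopically non-trivial loop $\gamma$ in $\GG^{2}$, some iterate $f^{t}(\gamma)$ is homotopic to a strongly legal loop whose length in $T$ scales by the factor $\lambda^{t}$, which rules out accidental circuits in $T$ and forces hyperbolic behaviour on all loops of non-zero weight. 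The auxiliary edges collapse consistently because each INP (Definition~\ref{INPpaths}(3)) is $f$-periodic and so must itself vanish in $T$. That the two constructions are mutually inverse is then formal, as the length of any strongly legal path is computed by the same weighted edge count from either side.

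For part (b), I would identify non-trivial point stabilizers of $T$ with conjugacy classes of $w \in \FN$ having vanishing translation length $\|w\|_{T} = 0$. Using Theorem~\ref{betterttrepresentative}(h) again, such a $w$ admits (up to iteration) a strongly legal loop representative, and $\|w\|_{T} = 0$ translates, by the same length computation used in part (a), to the requirement that this representative be supported entirely on edges $e$ with $v_{e} = 0$, together with $X$ and auxiliary edges. The zero locus of $\vec v_{*}$ in $\GG^{1}$ thus decomposes into connected components; the components lying inside $X$ correspond to classes of polynomial $\alpha$-growth (Theorem~\ref{additional-properties}(2)) and are excluded by hypothesis, whereas the remaining components are in bijection with the non-trivial $M(f)$-invariant subspaces of $\R^{\hat\Gamma}$ on which $\vec v_{*}$ vanishes---this invariance is immediate from the row-eigenvector equation applied coordinate-by-coordinate. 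Restricting $f$ to such a complementary component yields a sub-train-track (strong legality and expansion are inherited), and unrolling the marking shows that the induced automorphism on the corresponding point stabilizer is represented precisely by this sub-train-track with its restricted map.
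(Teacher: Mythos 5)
The paper does not actually prove Theorem~\ref{structuretheorem}: it is quoted as a known result with a pointer to \cite{Lu1} \S 4 and \cite{lu2} \S 4, so there is no ``paper's own proof'' here to compare against. Evaluated on its own terms, your sketch captures the standard high-level ideas (edge lengths $\leftrightarrow$ $T$-lengths, zero locus of $\vec v_{*}$ $\leftrightarrow$ polynomially growing substrata), but has two substantive gaps in the direction eigenvector $\Rightarrow$ tree, which is exactly where the real content of the theorem lies. First, the forward direction appeals to the claim that strongly legal paths ``project isometrically onto geodesics of $T$''. This is \emph{not} one of the listed $\beta$-train track properties; Theorem~\ref{betterttrepresentative}(f) gives only that strongly legal lifts are quasi-geodesics for the \emph{simplicial} metric, with no statement about a map to $T$. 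That the equivariant map $\widetilde\GG^{2} \to T$ sends legal paths isometrically onto segments is itself a nontrivial structural theorem (the ``train track property with respect to $T$'') and must be either proved or cited; as written it is assumed.

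Second, and more seriously, in the reverse direction you claim that assigning length $v_{e}$ to train track edges, $0$ to relative and auxiliary edges, and passing to the metric quotient of $\widetilde\GG^{2}$ yields a very small $\R$-tree, dispatching the $0$-hyperbolicity and very-small conditions via Theorem~\ref{betterttrepresentative}(h). This does not work as stated. Simply collapsing the zero-length subcomplex of a weighted $2$-complex is not guaranteed to yield an $\R$-tree; the role of the Nielsen faces is precisely to encode the nontrivial folding that must occur, and verifying that the INP branches $\eta'$ and $\eta''$ fold up completely in the quotient (so that the boundary of each Nielsen face becomes contractible) is a delicate argument, not an automatic consequence of the auxiliary edge having weight zero. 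Your statement that ``each INP \ldots must itself vanish in $T$'' is imprecise: the INP runs through train track edges of positive $\vec v_{*}$-weight (by Definition~\ref{INPpaths}(0)), so its image in $T$ is not a point --- rather its two legal branches must identify, which is the content that needs proving. Moreover, controlling translation lengths of loops via (h) does not by itself establish that the quotient is a tree nor that arc stabilizers are trivial; the standard route is a limiting argument (projectivized sequence $\lambda^{-t}\widetilde f^{t}{}^{*}d$, or the machinery of \cite{gjll}, \cite{gl}), which your sketch elides entirely. The treatment of part (b) is closer to correct, but inherits the same gap: the identification of vanishing-translation-length elements with the support structure of $\vec v_{*}$ presupposes the very isometry-on-legal-paths fact flagged above.
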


The use of this structure theorem is highlightened by the fact that,
after replacing $f$ by a suitable power, there are (up to scalar
multiples) finitely many eigen vectors of $M(f)$ which have as
support a subspace of $\R^{\hat \Gamma}$ on which $M(f)$ has an
irreducible matrix with irreducible powers. Here ``irreducible''
refers to the standard use of this terminology
in the context of non-negative matrices.
The resulting invariant $\R$-trees, called {\em partial pseudo-Anosov}
trees in \cite{Lu1}, are the smallest building blocks out of which the
exponentially growing part of $\alpha$ is iteratively built.
[However, a word of caution seems to be appropriate here:
Even if $M(f)$ consists of a
single irreducible block with irreducible powers, and if the relative
part of $\GG^{1}$ is empty, one can not conclude that $\alpha$ is an
iwip automorphism. This conclusion is only possible after a further
local analysis at the vertices of $\GG^{1}$, see \cite{lu2}, \S 7 and
\cite{JL}, \S IV.]

\smallskip


In \S 3  iteratively constructed invariant trees  $T_j$ have been considered, in order to find the characteristic family ${\cal H}(\alpha)$ of (conjugacy classes of) subgroups $H_i$ where $\alpha$ has polynomial growth. All of these $T_j$ are given as in Theorem \ref{structuretheorem} by eigenvectors $\vec v_*$ of the transition matrix $M(f)$ or of ``submatrices'' of $M(f)$ describing the induced $\beta$-train track map on an $f^t$-invariant subgraph of $\GG^2$.
In particular, we obtain:

\begin{proposition}
\label{relative-components}
The connected components $X_i$ of the relative part $X$ of $\GG^2$ are in 1-1 correspondence with the subgroups $H_i$ from the canonical family ${\cal H}(\alpha)$ of polynomial $\alpha$-growth:  one has
$$H_i = \pi_1 X_i\, ,$$
up to conjugation and permutation of the $H_i$.
\end{proposition}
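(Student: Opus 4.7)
The plan is to verify that the family $(\pi_1 X_1, \ldots, \pi_1 X_r)$ satisfies the three characterizing properties of $\mathcal{H}(\alpha)$ -- finite generation together with polynomial $\alpha$-growth, exhaustiveness, and minimality -- and then invoke the uniqueness from Proposition \ref{polynomialfamily} (b) to conclude. Each $\pi_1 X_i$ is finitely generated because $X_i$ is a finite graph, and by Theorem \ref{additional-properties} (6) the marking restricts on each $X_i$ to a monomorphism, so the family of subgroups is well defined up to conjugation in $\FN$.

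For the polynomial $\alpha$-growth of each $\pi_1 X_i$, note that since $f(X) \subset X$ and the components of $X$ are permuted (up to homotopy) by $f$, some power $f^{m_i}$ preserves $X_i$ up to homotopy. By Theorem \ref{additional-properties} (2), $f$ has polynomial growth on $X$, so every loop in $X_i$ has cyclic length growing polynomially under iteration of $f^{m_i}$, and hence of $\alpha$.

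Exhaustiveness is the key dynamical step. Given $w \in \FN$ of polynomial $\alpha$-growth, I would realize it as a loop $\gamma$ in $\GG^2$ and, using Theorem \ref{betterttrepresentative} (h), pass to an iterate $f^t(\gamma)$ that is freely homotopic to a strongly legal loop $\gamma^*$. The goal is to show $\gamma^*$ lies entirely in $X$. Suppose not: then $\gamma^*$ traverses at least one edge of the train track part $\hat\Gamma$. By Theorem \ref{betterttrepresentative} (d) all iterates $f^s(\gamma^*)$ remain strongly legal, and by the expansion property (Theorem \ref{betterttrepresentative} (e)) the number of train track edges they cross grows at least like $2^{s/b(f)}$, so the relative length of $f^s(\gamma^*)$ grows exponentially in $s$. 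Since strongly legal paths lift to absolute quasi-geodesics in $\tilde\GG^2$ by Theorem \ref{betterttrepresentative} (f), and relative length is bounded above by absolute length, the cyclic word length of $\alpha^{t+s}(w)$ in $\FN$ grows exponentially in $s$, contradicting polynomial growth. Hence $\gamma^*$ is contained in $X$, and therefore in some $X_i$, which forces $w$ to be conjugate into $\pi_1 X_i$.

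The main obstacle is minimality: ruling out that some $\pi_1 X_i$ is contained in a conjugate of $\pi_1 X_j$ with $i \neq j$. I would argue indirectly. By exhaustiveness just proved, each $H_j \in \mathcal{H}(\alpha)$ lies, up to conjugation, inside some $\pi_1 X_{i(j)}$; since conversely each $\pi_1 X_{i}$ is of polynomial growth and hence by exhaustiveness of $\mathcal{H}(\alpha)$ lies in a conjugate of some $H_k$, malnormality of $\mathcal{H}(\alpha)$ (Proposition \ref{malnormal}) together with these two containments forces $\pi_1 X_{i(j)}$ to equal (a conjugate of) $H_j$, yielding an injection $j \mapsto i(j)$ under which no two $H_j$ hit the same $X_i$. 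The remaining point is to exclude ``extra'' components $X_i$ outside the image. This is where the structural analysis is required: using the inductive construction of subsection \ref{rel-tt-to-beta-tt} and Theorem \ref{structuretheorem}, each component $X_i$ arises from a specific polynomial stratum (Case 3 of the proof of Proposition \ref{strong-tt-representative}, or as an auxiliary edge attached through a Nielsen face), and these strata correspond bijectively under the structure theorem with the orbits of non-trivial point stabilizers in the iteratively constructed invariant $\R$-trees $T_j$, which are precisely the data used in Section 3 to define $\mathcal{H}(\alpha)$. This matching shows the correspondence is bijective and finishes the proof.
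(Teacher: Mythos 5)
Your overall strategy---verifying that $(\pi_1 X_1, \ldots, \pi_1 X_r)$ is a finite family of finitely generated subgroups of polynomial $\alpha$-growth which is exhaustive and minimal, and then invoking the uniqueness from Proposition \ref{polynomialfamily}(b)---is a genuinely different route from the paper's, which obtains the proposition directly as a corollary of the structure theorem (Theorem \ref{structuretheorem}) by matching the eigenvector data of $M(f)$ and its submatrices with the invariant trees $T_j$ used in \S 3 to define $\mathcal H(\alpha)$. Your exhaustiveness argument is sound: the observation that a strongly legal loop traversing a train track edge has relative length growing exponentially (via Theorem \ref{betterttrepresentative} (d), (e), (f) and the loop-version in Theorem \ref{additional-properties}(5)), contradicting polynomial growth, is a clean and correct dynamical argument.

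The minimality step, however, contains a genuine gap. From the just-proved exhaustiveness you immediately infer ``each $H_j$ lies, up to conjugation, inside some $\pi_1 X_{i(j)}$,'' but what exhaustiveness gives is only the \emph{element-wise} statement: every $w \in H_j$ is conjugate (by some conjugator depending on $w$) into some $\pi_1 X_i$. Promoting this to a single conjugate containment of the entire subgroup $H_j$ requires an additional mechanism, for instance Serre's lemma for the $\R$-trees $T_j$ of Proposition \ref{invarianttree} (every element of a finitely generated subgroup being elliptic implies a global fixed point) or a malnormality property of the family $(\pi_1 X_i)$ itself, which you have not established at that point. The symmetric step, ``$\pi_1 X_i$ lies in a conjugate of some $H_k$,'' suffers from exactly the same gap with respect to the family $\mathcal H(\alpha)$. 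Moreover, you end the argument by appealing to the inductive construction of \S \ref{rel-tt-to-beta-tt} and to Theorem \ref{structuretheorem} to rule out ``extra'' components, which is precisely the paper's proof; so the alternative route is not in fact carried through independently. To complete your approach you would either need to supply the Serre-lemma style argument for the subgroup-level containments, or else establish directly that the family $(\pi_1 X_i)$ is malnormal (e.g.\ by observing that distinct components of the preimage of $X$ in $\tilde \GG^2$ are stabilized by distinct conjugates, whose intersections must act trivially on a common axis in $\tilde\Gamma$), at which point the uniqueness in Proposition \ref{polynomialfamily}(b) would finish the proof without recourse to the structure theorem.
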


\end{document}